\def\Z{\mathbb{Z}}
\def\R{\mathbb{R}}
\def\H{\mathcal{H}}
\def\C{\mathbb{C}}
\def\K{\mathbb{K}}
\def\Cl{\mathrm{Cl}^+}
\def\sQ{\mathcal{Q}}
\def\sF{\mathcal{F}}
\def\sN{\mathcal{N}}
\DeclareMathOperator{\ch}{cosh}
\DeclareMathOperator{\im}{Im}
\DeclareMathOperator{\re}{Re}
\DeclareMathOperator{\Tr}{Tr}
\DeclareMathOperator{\Ci}{Ci}
\DeclareMathOperator{\Si}{Si}
\def\SL{{\rm SL}}
\def\PSL{{\rm PSL}}
\newcommand{\pfrac}[2]{\left(\frac{#1}{#2}\right)}
\newcommand{\pmfrac}[2]{\left(\mfrac{#1}{#2}\right)}
\newcommand{\ptfrac}[2]{\left(\tfrac{#1}{#2}\right)}
\newcommand{\pMatrix}[4]{\left(\begin{matrix}#1 & #2 \\ #3 & #4\end{matrix}\right)}
\newcommand{\ppMatrix}[4]{\left(\!\pMatrix{#1}{#2}{#3}{#4}\!\right)}
\renewcommand{\pmatrix}[4]{\left(\begin{smallmatrix}#1 & #2 \\ #3 & #4\end{smallmatrix}\right)}
\renewcommand{\bar}[1]{\overline{#1}}
\renewcommand{\a}{\mathfrak{a}}
\renewcommand{\b}{\mathfrak{b}}
\renewcommand{\c}{\mathfrak{c}}
\newcommand{\fS}{\mathfrak{S}}
\renewcommand{\hat}{\widehat}
\renewcommand{\tilde}{\widetilde}
\renewcommand{\sl}{\big|}
\DeclareMathOperator{\sgn}{sgn}
\def\ep{\varepsilon}
\newtheorem{theorem}{Theorem}[section]
\newtheorem{lemma}[theorem]{Lemma}
\newtheorem{proposition}[theorem]{Proposition}
\theoremstyle{remark}
\newtheorem*{remark}{Remark}
\newtheorem*{remarks}{Remarks}
\numberwithin{equation}{section}
\title[Modular invariants for real quadratic fields]{Modular invariants for real quadratic fields and Kloosterman sums}
\date{\today}
\author{Nickolas Andersen}
\address{Brigham Young University Department of Mathematics,
Provo, UT 84602}
\email{nick@math.byu.edu}
\author{William Duke}
\address{UCLA Mathematics Department,
Los Angeles, CA 90095} 
\email{wdduke@ucla.edu}
\thanks{The authors were supported by NSF grant DMS--1701638. The second author was also suppored by the Simons Foundation, award number 554649.}
\begin{document}

\begin{abstract}
We investigate the asymptotic distribution of integrals of the $j$-function that are associated to ideal classes in a real quadratic field. 
To estimate the error term in our asymptotic formula, we prove a bound for sums of Kloosterman sums of half-integral weight that is uniform in every parameter. 
To establish this estimate we prove a variant of Kuznetsov's formula where the spectral data is restricted to half-integral weight forms in the Kohnen plus space, and we apply Young's hybrid subconvexity estimates for twisted modular $L$-functions.
\end{abstract}

\maketitle

\section{Introduction}

The relationship between modular forms and quadratic fields is exceedingly rich.
For instance, the Hilbert class field of an imaginary quadratic field may be generated by adjoining to the quadratic field  a special value of the modular $j$-function.
The connection between  class fields  of real quadratic fields and invariants of the modular group is much less  understood,
although there has been striking progress lately by Darmon and Vonk  \cite{DV}.
Our  aim in this paper is to study the asymptotic behavior of certain integrals of the modular $j$-function that are associated to ideal classes in a real quadratic field.
Before turning to this,  it is 
useful to make some definitions and to recall the corresponding problem in the imaginary quadratic case.

Let $\K$ be the quadratic field of discriminant $d$ and 
let $\Cl_d$ denote the narrow class group of $\K$.
Let $h(d) = \#\Cl_d$ denote the class number.
If $d<0$
then each ideal class $A\in \Cl_d$ contains exactly one fractional ideal of the form $z_A\Z+\Z$, where
\begin{equation}
	z_A=\frac{-b+i\sqrt {|d|}}{2a}
\end{equation}
for some relatively prime integers $a,b,c$ with $a>0$ and $b^2-4ac=d$, and where $z_A$ is in the fundamental domain
\begin{equation}
	\sF := \{z\in \H : -\tfrac 12< \re z\leq \tfrac 12, |z|\geq 1\}
\end{equation}
for the action of the modular group $\Gamma_1=\PSL_2(\Z)$.
Such $z_A$ are called \emph{reduced}.
A beautiful result from the theory of complex multiplication states that the values $j_1(z_A)$, as $A$ runs over ideal classes of discriminant $d$, are conjugate algebraic integers.
Here $j_1=j-744$ is the normalized modular $j$-invariant
\begin{equation}
	j_1(z) = q^{-1} + 196\,884 q + 21\,493\,760 q^2 + \ldots,
\end{equation}
where $q=e(z)=e^{2\pi iz}$.
It follows that the trace
\begin{equation} \label{eq:neg-trace}
	\Tr_d(j_1) := \frac{1}{\omega_d}\sum_{A\in \Cl_d} j_1(z_A),
\end{equation}
where $\omega_{-3}=3$, $\omega_{-4}=2$, and $\omega_d=1$ otherwise,
is a rational integer.
For example,
\begin{equation}
	\Tr_{-3}(j_1) = -248, \quad \Tr_{-4}(j_1) = 492, \quad \Tr_{-7}(j_1) = -4119, \quad \Tr_{-8}(j_1) = 7256.
\end{equation}

It is natural to ask how these values are distributed as $|d|\to\infty$. 
As a first approximation, it is not too hard to show that $\Tr_d(j_1)\sim (-1)^d \exp(\pi \sqrt{|d|})$ for large $d$,
but in fact much more is known.
In \cite{bjo} it was observed, and in \cite{duke-24} the second author proved, that
\begin{equation} \label{eq:24}
	\Tr_d(j_1) \, - \!\sum_{\im z_A>1}\! e(-z_A) \sim -24 h(d) 
\end{equation}
as $d\to -\infty$ through fundamental discriminants.
The value $-24$ is a suitably defined ``average of $j_1$'' over the fundamental domain $\sF$ (see \cite{duke-24}).

Now suppose that $\K$ is a real quadratic field, i.e. $d>0$.
Each ideal class $A\in \Cl_d$ contains a fractional ideal of the form  $w\Z+\Z\in A$ where $w \in \K$ is such that
\begin{equation} \label{eq:reduced-def}
	0 < w^\sigma < 1 < w,
\end{equation}
where $\sigma$ is the nontrivial Galois automorphism of $\K$. 
Such $w$ are called \emph{reduced} (in the sense of Zagier \cite{Za}); unlike in the imaginary quadratic case, a given ideal class may have many reduced representatives.
Let $\mathcal S_w$ be the oriented hyperbolic geodesic in $\H$ from $w^\sigma$ to $w$, and let $\mathcal C_A$ be the closed geodesic obtained by projecting $\mathcal S_w$ to $\Gamma_1\backslash \H$.
The choice of reduced $w$ does not affect $\mathcal C_A$.
One can view $\mathcal C_A$ in $\H$ as the geodesic from some point $z_0$ on $\mathcal S_w$ to $\gamma_w (z_0)$, where $\gamma_w$ is the hyperbolic element which generates the stabilizer of $w$ in $\Gamma_1$.
It is well-known that
\begin{equation}
	\operatorname{length}(\mathcal C_A) = 2\log \ep_d,
\end{equation}
where $\ep_d$ is the fundamental unit of $\K$.

A real quadratic analogue of the trace \eqref{eq:neg-trace} is the sum of integrals
\begin{equation} \label{eq:pos-trace}
	\Tr_d(j_1) := \sum_{A\in \Cl_d} \int_{\mathcal C_A} j_1(z) \mfrac{|dz|}{y},
\end{equation}
and one might ask how these invariants are distributed as the discriminant $d$ varies.
Numerically, we have
\begin{equation}
	\Tr_5(j_1)\approx-11.5417, \quad \Tr_8(j_1)\approx-19.1374, \quad \Tr_{13}(j_1)\approx-23.4094, \quad \Tr_{17}(j_1)\approx-43.9449.
\end{equation}
Note that these values are quite small even though $j_1$ grows exponentially in the cusp.
It was conjectured in \cite{DIT-cycle} that
\begin{equation} \label{eq:pos-trace-asymp}
	\Tr_d(j_1) \sim -24 \cdot 2\log \ep_d \, h(d)
\end{equation}
as $d\to\infty$ through fundamental discriminants.
This was proved independently in \cite{dfi-weyl} (for odd fundamental discriminants, with a power-saving of $d^{-\frac 1{5325}}$) and in \cite{masri-cycle} (for all fundamental discriminants, with a power-saving of $d^{-\frac1{400}}$).

The real quadratic invariants $\Tr_d(j_1)$ were first studied in \cite{DIT-cycle} in the context of harmonic Maass forms (nonholomorphic modular forms which are annihilated by the hyperbolic Laplacian).
There is a family of harmonic Maass forms $\{f_{d'}\}$ of weight $\frac 12$, indexed by positive discriminants $d'$, whose Fourier coefficients can be written in terms of the sums \eqref{eq:pos-trace} twisted by genus characters.
For each factorization $D=dd'$ of the fundamental discriminant $D$ into fundamental discriminants $d,d'$, there is a real character $\chi_d=\chi_{d'}$ of $\Cl_D$ called a genus character.
The $d$-th Fourier coefficient of $f_{d'}$ is given by
\begin{equation}
	\Tr_{d,d'}(j_1) := \sum_{A\in \Cl_D} \chi_d(A) \int_{\mathcal C_A} j_1(z) \mfrac{|dz|}{y}.
\end{equation}
In particular, the $d$-th Fourier coefficient of $f_1$ is $\Tr_d(j_1)$.
The remaining non-square-indexed coefficients can be described in terms of $\Tr_{d,d'}(j_m)$ for $m\geq 1$, where
$j_m$ is the unique modular function in $\C[j]$ of the form $j_m=q^{-m} + O(q)$.
Our first result concerns the asymptotic distribution of the values of $\Tr_{d,d'}(j_m)$ as any of the parameters $d,d',m$ tends to infinity.
We define $\delta_1 = 1$ and $\delta_d=0$ otherwise, and $\sigma_s(n) = \sum_{\ell\mid n} \ell^s$ for any $s\in \C$.

\begin{theorem} \label{thm:main-thm-cycle-int}
For each positive fundamental discriminant $D$, let $d$ be any positive fundamental discriminant dividing $D$.
Then for each $m\geq 1$ we have
\begin{equation} \label{eq:main-thm-cycle-int}
	\sum_{A\in \Cl_D} \chi_d(A) \int_{\mathcal C_A} j_m(z) \mfrac{|dz|}{y} = -24 \, \delta_{d} \, \sigma_1(m) \cdot 2h(D) \log \ep_D  + O\left( m^{\frac {8}{9}}D^{\frac{13}{27}}(mD)^\ep \right).
\end{equation}
\end{theorem}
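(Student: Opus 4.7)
The plan is to follow the general framework of Duke--Friedlander--Iwaniec (and Masri) while replacing their arithmetic-sum bounds with the sharper half-integral weight Kloosterman-sum estimate promised in the abstract. First I would represent $j_m(z)$ as the holomorphic part of a suitably regularized weight-$0$ Maass--Poincar\'e series, writing $j_m = F_m + c_m$ where $F_m$ is built from $\sum_{\gamma \in \Gamma_\infty \backslash \Gamma_1} q^{-m}|_\gamma$ with a spectral regularization absorbing the exponential growth at the cusp. Substituting this into the twisted trace on the left-hand side of \eqref{eq:main-thm-cycle-int} and applying hyperbolic (geodesic) unfolding along each closed geodesic $\mathcal C_A$, the integral opens into a double-coset sum whose identity coset produces a main term and whose remaining cosets are controlled by sums of weight-$\tfrac12$ Kloosterman sums in the Kohnen plus space twisted by the genus character $\chi_d$.

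For the main term I would argue as follows. The character sum $\sum_{A \in \Cl_D}\chi_d(A)$ vanishes unless $\chi_d$ is trivial, i.e.\ unless $d=1$, in which case it equals $h(D)$; this produces the factor $\delta_d h(D)$. Pairing this with the constant Fourier coefficient of $F_m$ (which contributes $\sigma_1(m)$, since $j_m$ is essentially the image of $j_1$ under the $m$th Hecke operator), with the geodesic length $2\log \ep_D$, and with the ``average of $j_1$'' over $\sF$ equal to $-24$, gives the leading term $-24\,\delta_d\,\sigma_1(m)\cdot 2 h(D)\log \ep_D$.

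The real work is the error term, which will have the schematic shape
\[
	\sum_{c}\frac{K^+_{\chi_d}(m,\Delta;c)}{c}\,\Phi_m\!\Bigl(\frac{4\pi\sqrt{mD}}{c}\Bigr),
\]
with $\Phi_m$ an explicit Bessel transform and $\Delta$ a discriminant parameter produced by the unfolding. Opening these Kloosterman sums via the Kohnen-plus-space Kuznetsov formula proved elsewhere in the paper converts the expression into a spectral average over weight-$\tfrac12$ Maass forms in the plus space; by the Waldspurger--Katok--Sarnak--Kohnen correspondence the relevant squared Fourier coefficients equal (up to normalization) central values $L(\tfrac12, f \otimes \chi_d)$ of weight-$0$ Hecke--Maass forms $f$ on $\SL_2(\Z)$, and Young's hybrid subconvex bound controls them uniformly in both $D$ and $m$. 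Balancing this spectral input (efficient for large $c$) against the Weil bound on $K^+_{\chi_d}$ (efficient for small $c$), and executing the sum over Hecke eigenvalues $\lambda_f(m)$ by a spectral large sieve, should yield the claimed exponents $\tfrac{8}{9}$ and $\tfrac{13}{27}$.

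The main obstacle I anticipate is the uniform control of the Bessel transform $\Phi_m$ across the transition range $c\asymp\sqrt{mD}$, together with the simultaneous optimization of exponents in $m$ and $D$: it is the crossover point between the Weil and subconvex bounds that should produce the specific fractions $\tfrac{8}{9}$ and $\tfrac{13}{27}$. A secondary subtlety is to verify that the Eisenstein contribution on the spectral side cancels precisely against the constant-term piece subtracted during regularization, so that no unwanted main-term-sized remainder survives the analysis.
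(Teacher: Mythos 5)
Your plan has the same skeleton as the paper's argument: write $j_m$ as a regularized Poincar\'e series $j_m(z)=2\pi\sqrt m\,F_{-m}(z,1)-24\sigma_1(m)$, extract the main term $-24\,\delta_d\,\sigma_1(m)\cdot 2h(D)\log\ep_D$ from the subtracted constant together with the vanishing of $\sum_A\chi_d(A)$ for $d\neq1$, and reduce the error to genus-character Weyl sums, hence (via Kohnen's identity) to plus-space Kloosterman sums $S^+_{1/2}\bigl(d',(m/\ell)^2d;c/\ell\bigr)$, which are ultimately controlled by the plus-space Kuznetsov formula, the Kohnen--Zagier and Baruch--Mao formulas, and Young's hybrid subconvexity. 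The organizational difference is that you propose to feed the oscillating Bessel weight $\Phi_m$ directly into the Kuznetsov formula, whereas the paper first proves a standalone sharp-cutoff estimate $\sum_{4\mid c\leq x}S_k^+(m,n,c)/c\ll\bigl(x^{1/6}+(dd')^{2/9}(vw)^{1/3}\bigr)(mnx)^{\varepsilon}$ (Theorem~\ref{thm:main-kloo}) and then handles the explicit kernel $\sin(4\pi m\sqrt D/c)$ coming from $J_{1/2}$ by partial summation, balancing against the trivial estimate on the initial segment $c\leq A=m^{8/9}D^{13/27}$; the cycle-integral-to-Weyl-sum step is quoted from \cite{DIT-geom} rather than re-derived by unfolding. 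Both routes are viable; the paper's buys a reusable uniform Kloosterman bound at the cost of an unsmoothing argument.

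Two points in your sketch are misdirected, though neither is fatal. First, the $m$-aspect does not enter as a sum over Hecke eigenvalues $\lambda_f(m)$ requiring a spectral large sieve: after Kohnen's identity the parameter $m$ sits inside the \emph{index} of the Kloosterman sum, and on the spectral side it surfaces only as the square part $w$ of that index, controlled pointwise by $a_j(w)\ll w^{\theta+\varepsilon}$ (Kim--Sarnak) together with the mean-value estimate of Section~\ref{sec:MVE}. Second, there is no cancellation to verify between the continuous spectrum and the regularization: the main term is produced entirely by the $F_0$-part of $j_m(z,s)$ before any spectral expansion, and the Eisenstein integral in the Kuznetsov formula is simply bounded by $|dd'|^{1/6+\varepsilon}$ using Young's sixth-moment bound for $L(\tfrac12+it,\chi_d)$ --- it contributes only to the error.
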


\begin{remarks}
In the case $d=1$, the power-saving of $D^{-\frac 1{54}}$ in Theorem~\ref{thm:main-thm-cycle-int} improves on the results of \cite{masri-cycle,dfi-weyl}.
The generalizations to $d>1$ and $m>1$ are new, and the latter confirms the observation in \cite{DIT-cycle} that $\Tr_D(j_m) \sim -24\sigma_1(m)\cdot 2\log \ep_D h(D)$ as $m\to\infty$.

When $D=dd'$ is a factorization of $D$ into negative fundamental discriminants, the left-hand side of \eqref{eq:main-thm-cycle-int} is identically zero.
To see this, let $J$ denote the class of the different $(\sqrt D)$ of $\K$.
The closed geodesic associated to $JA^{-1}$ has the same image in $\Gamma_1\backslash \H$ as $\mathcal C_A$ but with the opposite orientation.
Since $\chi_d(J) = \sgn d$, the left-hand side of \eqref{eq:main-thm-cycle-int} is forced to vanish whenever $d<0$.
\end{remarks}

In order to give a better geometric interpretation when $D=dd'$ where $d$ and $d'$ are negative,
Imamo\=glu, T\'oth, and the second author \cite{DIT-geom} recently defined a new invariant $\sF_A$, which is a finite area hyperbolic surface with boundary $\mathcal C_A$.
We briefly describe the construction of $\sF_A$; for details see \cite{DIT-geom}.
Let $w$ be one of the reduced quadratic irrationalities associated to $A$, and let $\gamma_w\in \Gamma_1$ be the hyperbolic element that fixes $w$ and $w^\sigma$.
Then $\gamma_w$ can be written as
\begin{equation} \label{eq:gamma-w-ts}
	\gamma_w = T^{\lceil w\rceil}ST^{n_1}ST^{n_2}S\cdots T^{n_{\ell}}S 
\end{equation}
for some integers $n_i\geq 2$, where $T=\pm\pmatrix 1101$ and $S=\pm\pmatrix 0{-1}10$ are generators of $\Gamma_1$.
The cycle $(n_1,\ldots n_\ell)$ is the period of the minus continued fraction of $w$, and $\ell$ is the number of distinct reduced representatives of $A$.
Let $S_k:= T^{(n_1+\cdots +n_k)}ST^{-(n_1+\cdots +n_k)}$ and define
\begin{equation}
	\Gamma_A := \langle S_1, \ldots, S_\ell, T^{(n_1+\cdots+n_\ell)}\rangle.
\end{equation}
This group is an infinite-index (i.e.~thin) subgroup of $\Gamma_1$.
Let $\sN_A$ be the Nielsen region of $\Gamma_A$: 
the smallest non-empty $\Gamma_A$-invariant open convex subset of $\H$.
Then the surface $\sF_A$ is defined as $\Gamma_A\backslash \sN_A$.
A different choice of reduced $w$ representing $A$ yields a subgroup of $\Gamma_1$ conjugate to $\Gamma_A$ by a translation, so the surface $\sF_A$ is uniquely defined by $A$; see Theorem~1 of \cite{DIT-geom}.
In that theorem we also find that the area of $\sF_A$ is $\pi \ell$, with $\ell$ as in \eqref{eq:gamma-w-ts}.

Our second result concerns the distribution of sums of the integrals of $j_m$ over the surfaces $\sF_A$ as the discriminant varies.
The functions $j_m$ grow exponentially in the cusp, so we regularize\footnote{The published version of this paper uses a different regularization that we erroneously claimed was equivalent to the one given here using $\nu_A(z)$. See Section~\ref{sec:geom-inv} for an explanation of how these two regularizations differ.} the integrals using the function $\nu_A(z)$ defined in Section~\ref{sec:geom-inv}.

\begin{theorem} \label{thm:main-thm-surf}
For each positive fundamental discriminant $D$, let $D=dd'$ be any factorization into negative fundamental discriminants.
Then for each $m\geq 1$ we have
\begin{equation} \label{eq:main-thm-surf}
	\mfrac{1}{4\pi}\sum_{A\in \Cl_D} \chi_d(A) \int_{\sF} j_m(z) \nu_A(z) \mfrac{dxdy}{y^2} = -24 \sigma_1(m) \frac{h(d)h(d')}{\omega_d \omega_{d'}} + O\left( m^{\frac {8}{9}}D^{\frac{13}{27}}(mD)^\ep \right).
\end{equation}
\end{theorem}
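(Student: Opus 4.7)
My plan is to parallel the proof of Theorem~\ref{thm:main-thm-cycle-int}, replacing the cycle integrals over $\mathcal C_A$ by the surface integrals over $\sF_A$ and exploiting the structure of the Nielsen region developed in \cite{DIT-geom}. The first step is to show that the regularized quantity $\int_{\sF_A} j_m \tfrac{dxdy}{y^2}$ can be interpreted as (a piece of) the $D$-th Fourier coefficient of a regularized theta lift of $j_m$ to weight $\tfrac12$ in the Kohnen plus space, mirroring the role of $f_{d'}$ described in the paragraph preceding Theorem~\ref{thm:main-thm-cycle-int} but now in the case where both $d,d'<0$. To make sense of the limit \eqref{eq:FA-Y-reg}, I would subtract the polar parts $e(-nz)$ of $j_m$ at each $\Gamma_1$-translate of $i\infty$ falling inside $\sN_A$; away from these cusps the polar parts decay exponentially, so the limit $Y\to\infty$ converges.

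Granted the theta-lift identification, the main term arises from the constant function in the spectral decomposition of $j_m$, which contributes the regularized ``average'' $-24\,\sigma_1(m)$ (in the sense of the imaginary quadratic formula \eqref{eq:24} and \cite{duke-24}) times the geometric factor
\begin{equation}
\tfrac{1}{4\pi} \sum_{A \in \Cl_D} \chi_d(A)\, \mathrm{area}(\sF_A) \;=\; \tfrac{1}{4} \sum_{A \in \Cl_D} \chi_d(A)\, \ell_A,
\end{equation}
where I use $\mathrm{area}(\sF_A) = \pi\,\ell_A$ from \cite{DIT-geom}. A Gauss-type identity that pairs the reduced representatives of discriminant $D = dd'$ with classes in $\Cl_d \times \Cl_{d'}$ compatibly with $\chi_d$ evaluates this sum (up to normalization) to $h(d)\,h(d')/(\omega_d\,\omega_{d'})$, giving the predicted main term of \eqref{eq:main-thm-surf}.

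The error will be controlled by bounding the contribution of the cuspidal and Eisenstein spectrum on $\Gamma_1 \backslash \H$. After pairing with $\chi_d$ and summing over $A \in \Cl_D$, these pieces are identified with linear functionals of the $D$-th coefficient of weight-$\tfrac12$ forms in the Kohnen plus space---precisely the quantities that the variant of Kuznetsov's formula and Young's hybrid subconvexity estimates (the technical heart of the paper) are designed to handle, yielding the matching error $O\bigl(m^{8/9}\, D^{13/27}\, (mD)^\ep\bigr)$ already present in Theorem~\ref{thm:main-thm-cycle-int}.

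The main obstacle is the first step. One must set up the Fourier-coefficient identification and the regularization cleanly on the thin-group surface $\sF_A$, whose own cusps (the parabolic fixed points of $\Gamma_A$) are distinct from the $\Gamma_1$-cusps lying inside $\sN_A$. Verifying that the divergent contributions from those $\Gamma_1$-cusps cancel against the subtracted polar parts of $j_m$, while simultaneously exposing the spectral content needed for the error estimate, is where the geometric input from \cite{DIT-geom} and the analytic input from the paper's half-integral weight Kloosterman machinery must be combined most carefully.
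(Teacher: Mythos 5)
Your main term is obtained essentially as in the paper: writing $j_m(z)=2\pi\sqrt m\,F_{-m}(z,1)-24\sigma_1(m)$, the constant $-24\sigma_1(m)$ pairs with $\frac{1}{4\pi}\sum_A\chi_d(A)\operatorname{area}(\sF_A)$, and the weighted area sum equals $h(d)h(d')/(\omega_d\omega_{d'})$ --- this is Corollary~4 of \cite{dit-kronecker}, which is exactly your ``Gauss-type identity.'' The genuine gap is the step you yourself flag as the main obstacle: you never actually identify the remaining piece with quantities your Kloosterman machinery controls, and the route you gesture at would not work as stated. Spectrally decomposing $j_m$ over the cuspidal and Eisenstein spectrum of $\Gamma_1\backslash\H$ is the equidistribution method of \cite{duke-24,masri-cycle}, which the paper deliberately avoids (and $j_m$ is not in $L^2$, so that decomposition needs its own regularization). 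The paper instead replaces the truncation \eqref{eq:FA-Y-reg} by the absolutely convergent pairing $\int_\sF j_m(z)\nu_Q(z)\,\frac{dxdy}{y^2}$ against the winding-number kernel $\nu_Q$, applies Stokes' theorem (Lemma~1 of \cite{dit-kronecker}) to get $\frac{s(1-s)}{2}\int_\sF F_{-m}(z,s)\nu_Q\,\frac{dxdy}{y^2}=\int_{\mathcal C_Q}i\partial_z F_{-m}(z,s)\,dz$, and, because of the vanishing factor $s(1-s)$, recovers the value at $s=1$ by differentiating in $s$. Lemma~5 of \cite{DIT-geom} then gives the twisted sum of these boundary integrals as an explicit series in the Weyl sums $T_m(d',d;c)$ against Bessel factors, and Kohnen's identity \eqref{eq:kohnen-identity} converts those into the plus-space Kloosterman sums. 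None of this chain (Stokes, the $\partial_s|_{s=1}$ device, the Weyl sums, Kohnen's identity) appears in your outline, and your proposed regularization by subtracting polar parts at $\Gamma_1$-translates of $i\infty$ inside $\sN_A$ is both unnecessary and delicate, since the closure of $\sN_A$ meets infinitely many rationals.

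Even granting an identification with sums of Kloosterman sums, the error term does not follow by simply citing Theorem~\ref{thm:main-kloo}: the exponents $m^{8/9}D^{13/27}$ come from splitting the $c$-sum at $c=m^{8/9}D^{13/27}$, estimating the initial segment trivially via the Weil bound, and applying partial summation with Theorem~\ref{thm:sum-T-sums} to the tail, using the decay of $f(x)=\Ci(2x)\sin x-\Si(2x)\cos x+\log(2)\sin x$, which replaces $\sin x$ in the surface-integral case. That optimization is absent from your proposal, so the matching error bound is asserted rather than derived.
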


\begin{remark}
When $D=dd'$ is a factorization into positive discriminants, the left-hand side of \eqref{eq:main-thm-surf} is identically zero because $A\mapsto JA^{-1}$ reverses the orientation of the surface $\mathcal F_A$.
\end{remark}

An interesting special case occurs when $D=4p$ where $p\equiv 3\pmod{4}$ is a prime.
In this case the identity class $I=I_p$ is not equivalent to the class of the different $J=J_p$.
The Cohen-Lenstra heuristics predict that approximately $75\%$ of such fields have wide class number one, which would imply that the classes containing $I$ and $J$ are the only ideal classes.
If this is the case, then there is a sequence of primes $p\equiv 3\pmod{4}$ for which
\begin{equation}
	\int_{\sF_{I_p}} j_1(z) \mfrac{dxdy}{y^2} \sim -2\pi h(-p) \quad \text{ and } \quad \int_{\sF_{J_p}} j_1(z) \mfrac{dxdy}{y^2} \sim 2\pi h(-p).
\end{equation}

The method used in \cite{duke-24} to prove \eqref{eq:24} and in \cite{masri-cycle} to prove \eqref{eq:pos-trace-asymp} involves the equidistribution of CM points and closed geodesics originally developed by the second author in \cite{duke-half-integral}.
By contrast, here we employ a relation between the invariants in \eqref{eq:main-thm-cycle-int} and \eqref{eq:main-thm-surf} and sums of Kloosterman sums (see Section~\ref{sec:geom-inv}).
We then estimate the sums of Kloosterman sums directly via a Kuznetsov-type formula.

The Kloosterman sums in question are those which appear in the Fourier coefficients of Poincar\'e series of half-integral weight in the Kohnen plus space.
In weight $k=\lambda+\frac 12$, the plus space consists of holomorphic or Maass cusp forms whose Fourier coefficients are supported on exponents $n$ such that $(-1)^\lambda n\equiv 0,1\pmod{4}$.
For integers $m,n$ satisfying the plus space condition and $c$ a positive integer divisible by $4$ we define
\begin{equation} \label{eq:sk+def}
	S_k^+(m,n,c) := e\Big(\!-\mfrac k4\Big) \sum_{d\bmod c} \pfrac{c}{d} \ep_d^{2k} e\pfrac{m\bar d+n d}{c} \times
	\begin{cases}
		1 & \text{ if } 8\mid c,\\
		2 & \text{ if }4\mid\mid c,
	\end{cases}
\end{equation}
where $d\bar d\equiv 1\pmod{c}$ and $\ep_d=1$ or $i$ according to $d\equiv 1$ or $3\pmod{4}$, respectively.
The Kloosterman sums \eqref{eq:sk+def} are real-valued and satisfy the relation
\begin{equation} \label{eq:kloo-neg-k}
	S_k^+(m,n,c) = S_{-k}^+(-m,-n,c).
\end{equation}
We prove a strong uniform bound for these sums which is of independent interest.
We remark that similar (but weaker) estimates are hiding in the background of the methods of \cite{duke-24,masri-cycle}.

\begin{theorem} \label{thm:main-kloo}
Let $k=\pm\frac 12=\lambda+\frac 12$.
Suppose that $m,n$ are positive integers such that $(-1)^\lambda m=v^2 d'$ and $(-1)^\lambda n=w^2d$, where $d,d'$ are fundamental discriminants not both equal to $1$.
Then
\begin{equation} \label{eq:main-kloo}
 	\sum_{4\mid c\leq x} \frac{S_{k}^+(m,n,c)}{c} \ll \left(x^{\frac 16} + (dd')^{\frac 29}(vw)^{\frac 13}\right)(mnx)^\varepsilon.
\end{equation} 
\end{theorem}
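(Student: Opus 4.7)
The plan is to convert the sum of Kloosterman sums into a spectral sum via the plus-space Kuznetsov formula developed earlier in the paper, and then estimate the spectral sum using the Kohnen--Zagier formula together with Young's hybrid subconvexity bound for twisted $L$-functions of the Shimura lift.

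First, I would introduce a smooth cutoff $\phi$ supported on $[\tfrac12,2x]$ with $\phi(c)=1$ on $[1,x]$, so that $\sum_{4\mid c\leq x}S_k^+(m,n,c)/c$ differs from $\sum_{4\mid c}\phi(c)S_k^+(m,n,c)/c$ by an admissible error, and then apply the plus-space Kuznetsov formula. This expresses the smoothed sum as a spectral average
\[
\sum_j \hat\phi(t_j)\,\rho_j(m)\overline{\rho_j(n)} \;+\; (\text{holomorphic and Eisenstein contributions}),
\]
where $f_j$ runs over plus-space Maass cusp forms of weight $k$ with spectral parameter $t_j$ and Fourier coefficients $\rho_j$, and $\hat\phi$ is the Bessel transform prescribed by Kuznetsov. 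A standard stationary phase analysis shows that $\hat\phi(t)$ has transition range near $|t|\asymp \sqrt{mn}/x$ with decay like $x^{1/2}|t|^{-3/2}$ above it, so that up to negligible error the spectral sum may be truncated at $|t_j|\leq T$ for a parameter $T$ to be optimized. The Eisenstein contribution, given by Kohnen's explicit formula in terms of Dirichlet $L$-functions, produces a main term only if $d=d'=1$, and hence is negligible under our hypothesis.

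Second, on the cuspidal side I would invoke the Kohnen--Zagier identity: for a Hecke Maass form $f_j$ in the plus space, $|\rho_j(|D|)|^2$ at a fundamental discriminant $D$ is proportional to $L(\tfrac 12,F_j\otimes\chi_D)/\|F_j\|^2$ up to explicit archimedean factors, where $F_j$ is the Shimura lift of $f_j$. Hecke multiplicativity in the plus space then writes $\rho_j(m)=\rho_j(v^2 d')$ as $\rho_j(d')$ times a divisor sum over $v$ of Hecke eigenvalues of $F_j$, and similarly for $\rho_j(n)=\rho_j(w^2 d)$. After Cauchy--Schwarz in $j$, the spectral sum is controlled by a spectral average of
\[
\tau(v)\tau(w)\,L(\tfrac 12,F_j\otimes\chi_d)^{1/2}\,L(\tfrac 12,F_j\otimes\chi_{d'})^{1/2}\big/\|F_j\|^2,
\]
with an analogous expression for the holomorphic part of the spectrum.

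Finally, I would combine Young's hybrid subconvexity bound for $L(\tfrac 12,F_j\otimes\chi_D)$ in the $D$ and $t_j$ aspects with the standard mean value estimate of shape $\sum_{|t_j|\leq T}L(\tfrac 12,F_j\otimes\chi_D)/\|F_j\|^2\ll (T|D|)^\epsilon T^2$, using Young's bound to break convexity in only one $L$-factor and a second-moment estimate in the other to avoid squaring the subconvexity exponent. Balancing the two regimes of $\hat\phi$ against the choice of $T$ then produces the hybrid estimate $x^{1/6}+(dd')^{2/9}(vw)^{1/3}$ up to $(mnx)^\epsilon$. The principal obstacle is twofold. First, one must carry out the derivation of the plus-space Kuznetsov formula with Fourier coefficients normalized so that Kohnen--Zagier applies without stray factors, together with an analysis of the Eisenstein terms precise enough to confirm that they are negligible away from $d=d'=1$. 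Second, the optimization in the final step must be carried out uniformly in all five parameters $x,d,d',v,w$, with Young's subconvexity applied in only one $L$-factor, so as to produce the exponent $2/9$ in $(dd')$ rather than the $1/3$ of a naive two-sided application.
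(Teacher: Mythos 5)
Your high-level architecture matches the paper's: smooth the sum, apply the plus-space Kuznetsov formula, convert coefficients to central $L$-values via Waldspurger-type formulas, and feed in Young's hybrid subconvexity. But there are concrete gaps in how you handle two of the three spectral terms. First, the Eisenstein contribution is not negligible away from $d=d'=1$: in the plus-space formula it is an integral of $L(\tfrac12-2ir,\chi_{d'})L(\tfrac12+2ir,\chi_d)$ against $\hat\varphi(r)/(|\zeta(1+4ir)|^2\cosh\pi r\,|\Gamma(\tfrac{k+1}{2}+ir)|^2)$, and with only the convexity bound this contributes roughly $(dd')^{1/4}$, which already exceeds the target $(dd')^{2/9}$ when $v=w=1$ (and the $r$-integral is not even obviously convergent with pointwise convexity). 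The paper controls it by $(dd')^{1/6+\varepsilon}$ using Young's sixth-moment bound $\int_T^{T+1}|L(\tfrac12+ir,\chi_d)|^6\,dr\ll(|d|(1+T))^{1+\varepsilon}$ and H\"older; you need this input. Relatedly, the hypothesis that $d,d'$ are not both $1$ is used not for the Eisenstein term but to remove the bottom eigenfunction $y^{1/4}\theta(z)$ at $\lambda_0=\tfrac3{16}$, whose coefficients are supported on squares.

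Second, the mean value estimate you call standard, $\sum_{|t_j|\le T}L(\tfrac12,F_j\otimes\chi_D)/\|F_j\|^2\ll(T|D|)^\varepsilon T^2$ uniformly in $D$, is not available off the shelf; the paper devotes Section~\ref{sec:MVE} to proving the corresponding statement for half-integral weight coefficients, and the result carries an unavoidable secondary term of size $|D|^{1/2+\varepsilon}$ which genuinely enters the final optimization (producing the $(m+n)^{1/8}$ and $(mn)^{1/4}$ terms). Moreover, your proposed interpolation (Young's pointwise Weyl bound on one factor, a first moment on the other) is asymmetric and lossier than what is needed; the paper instead applies H\"older with exponents $\tfrac16+\tfrac16+\tfrac23$ to Young's \emph{third moment} bound on both factors, and then interpolates $\min(X,Y)\ll X^aY^{1-a}$ against the mean value estimate. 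Finally, the exponents $2/9$ and $1/3$ do not come from the Maass term at all: they arise from balancing the Weil bound on the initial segment $c\le(dd')^{4/9}(vw)^{2/3}$ against the holomorphic contribution, which via Kohnen--Zagier, Young, and the Sarnak--Tsimerman bound $\sum_\ell\ell\,\tilde\varphi(\ell)\ll\sqrt{mn}/x$ equals $vw(dd')^{2/3}/x$ up to $(mnx)^\varepsilon$. Without identifying this dominant term the claimed final exponents cannot be derived. (A minor point: the Waldspurger-type formula for Maass forms is due to Baruch--Mao, not Kohnen--Zagier, and requires the Hoffstein--Lockhart lower bound for $\langle v_j,v_j\rangle$.)
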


Friedlander, Iwaniec, and the second author \cite{dfi-weyl} proved an analogous estimate for smoothed sums of Kloosterman sums on $\Gamma_0(4q)$ with a power saving of $n^{-1/1330}$ when $n$ is squarefree.
Individually, the Kloosterman sums satisfy the Weil-type bound
\begin{equation} \label{eq:weil-bound}
	|S_k^+(m,n,c)| \leq 2 \sigma_0(c) \gcd(m,n,c)^{\frac 12} \sqrt{c},
\end{equation}
(see, e.g., Lemma~6.1 of \cite{dfi-weyl})
so the sum in \eqref{eq:main-kloo} is trivially bounded above by $(mnx)^{\varepsilon} \sqrt x$.
Theorem~\ref{thm:main-kloo} should be compared with the bound of Sarnak and Tsimerman \cite{sarnak-tsimerman} for the ordinary integral weight Kloosterman sums $S(m,n,c)$ which improves on the pivotal result of Kuznetsov in \cite{kuznetsov}.
The main result of \cite{sarnak-tsimerman} is unconditional and depends on progress toward the Ramanujan conjecture for Maass cusp forms of weight $0$.
Assuming that conjecture, their theorem states that
\begin{equation}
	\sum_{c\leq x} \frac{S(m,n,c)}{c} \ll \left(x^{\frac 16} + (mn)^{\frac 16}\right)(mnx)^\varepsilon.
\end{equation}
Our method also yields an exponent of $\frac 16$ for $dd'$ in \eqref{eq:main-kloo} if we assume the Lindel\"of hypothesis for $L(\frac 12,\chi)$ and $L(\frac 12,f\times\chi)$, where $\chi$ is a quadratic Dirichlet character and $f$ is an integral weight cusp form (holomorphic or Maass).
Via the correspondence of Waldspurger, the Lindel\"of hypothesis for all such $L(\frac 12,f\times \chi)$ is equivalent to the Ramanujan conjecture for half-integral weight forms.

Recently Ahlgren and the first author used a similar approach to study the half-integral weight Kloosterman sums associated to the multiplier system for the Dedekind eta function.
This was used in \cite{aa-kloosterman} to improve the error bounds of \cite{lehmer-remainders,folsom-masri} for the classical formula of Hardy, Ramanujan, and Rademacher for the partition function $p(n)$.
In particular, it was shown that the discrepancy between $p(n)$ and the first $O(\sqrt n)$ terms in the formula is at most $O(n^{-\frac 12-\frac{1}{168}+\varepsilon})$.

The proof of Theorem~\ref{thm:main-kloo} hinges on a version of Kuznetsov's formula which relates the Kloosterman sums \eqref{eq:sk+def} to the coefficients of holomorphic cusp forms, Maass cusp forms, and Eisenstein series of half-integral weight in the plus space.
One advantage of the plus space is that the Waldspurger correspondence is completely explicit on that space via \cite{kohnen-zagier} and \cite{baruch-mao}; knowledge of the exact proportionality constant in the Waldspurger correspondence is crucial for us.
Here we briefly define the relevant quantities and state a special case of our version of the Kuznetsov formula.
Let $\mathcal H_k^+$ (resp.~$\mathcal V_k^+$) denote an orthonormal Hecke basis for the plus space of holomorphic (resp.~Maass) cusp forms of weight $k$ for $\Gamma_0(4)$.
For each $g\in \mathcal H_k^+$ (resp.~$u_j\in \mathcal V_k^+$) let $\rho_g(n)$ (resp.~$\rho_j(n)$) denote the suitably normalized $n$-th Fourier coefficient of $g$ (resp.~$u_j$).
For each $j$, let $\lambda_j = \frac 14+r_j^2$ denote the Laplace eigenvalue of $u_j$.
The full statement with detailed definitions appears in Section~\ref{sec:plus} below.

\begin{theorem} \label{thm:thm-kuz-intro}
Let $k=\pm \frac12=\lambda+\frac 12$.
Suppose that $m,n$ are positive integers such that $(-1)^\lambda m$ and $(-1)^\lambda n$ are fundamental discriminants.
Suppose that $\varphi:[0,\infty)\to\R$ is a smooth test function which satisfies \eqref{eq:varphi-cond}, and let $\widetilde\varphi$ and $\widehat\varphi$ denote the integral transforms in \eqref{eq:phi-tilde-def}--\eqref{eq:phi-hat-def}. 
Then
\begin{multline} \label{eq:kuz-thm-intro}
\sum_{4\mid c>0} \frac{S_k^+(m,n,c)}{c} \varphi\pfrac{4\pi\sqrt{mn}}{c} \\ =
6\sqrt{mn} \sum_{u_j \in \mathcal V_k^+} \frac{\overline{\rho_{j}(m)}\rho_{j}(n)}{\cosh\pi r_j} \widehat\varphi(r_j) + 
 \mfrac 32 \sum_{\ell\equiv k\bmod 2} e\ptfrac{\ell-k}4 \widetilde\varphi(\ell) \Gamma(\ell) \sum_{g\in \mathcal H_\ell^+} \overline{\rho_{g}(m)}\rho_{g}(n)  \\
+ \int_{-\infty}^\infty \pfrac{n}{m}^{ir} \frac{L(\frac 12-2ir,\chi_{(-1)^\lambda m}) L(\frac 12+2ir,\chi_{(-1)^\lambda n})}{2\cosh\pi r |\Gamma(\frac{k+1}{2}+ir)|^2 |\zeta(1+4ir)|^2} \widehat\varphi(r) \, dr.
\end{multline}
\end{theorem}

\begin{remark}
This version of the Kuznetsov formula for Maass forms in the plus space for $\Gamma_0(4)$  with weight $\pm \frac 12$  is precisely analogous to the original version of Kuznetsov's formula for the full modular group.
To prove it
we apply Bir\'o's idea \cite{biro} of taking a linear combination of Proskurin's Kuznetsov-type formula evaluated at various cusp-pairs in order to project the holomorphic and Maass cusp forms into the plus space.
The main technical complication arises from the sum of Eisenstein series terms from Proskurin's formula, which we show simplifies to the integral of Dirichlet $L$-functions in \eqref{eq:kuz-thm-intro}.
The simplicity of that integral is reminiscent of the corresponding term in Kuznetsov's original formula \cite[Theorem~2]{kuznetsov} for the ordinary weight $0$ Kloosterman sums;
in that formula, the Eisenstein term is
\begin{equation}
	\frac 1\pi\int_{-\infty}^{\infty} \pmfrac nm^{ir} \frac{\sigma_{2ir}(m)\sigma_{-2ir}(n)}{|\zeta(1+2ir)|^2} \hat\varphi(r) \, dr.
\end{equation}
Note that if $k=0$ then $\cosh\pi r|\Gamma(\frac {k+1}2+ir)|^2=\pi$.
\end{remark}

The most crucial input in the proof of Theorem~\ref{thm:main-kloo} is Young's Weyl-type hybrid subconvexity estimates \cite{young-subconvexity} for $L(\frac 12,f\times\chi_d)$ and $L(\frac 12,\chi_d)$ which improve on the groundbreaking results of Conrey and Iwaniec \cite{conrey-iwaniec}.
Young proved that
\begin{equation} \label{eq:young-thm-intro}
	\sum_{f} L(\tfrac 12,f\times \chi_d)^3 \ll (kd)^{1+\varepsilon}
\end{equation}
for odd fundamental discriminants $d$, where
the sum is over all holomorphic newforms of weight $k$ and level dividing $d$.
In Appendix~\ref{sec:appendix} we sketch the details required to generalize Young's result to twists by $\chi_d$ for even fundamental discriminants $d$, where the sum is over $f$ of level dividing the squarefree part of $d$.
The uniformity of Young's result in both the level and weight directly influences the quality of the exponents in \eqref{eq:main-kloo}.
There are corresponding results in \cite{young-subconvexity} for twisted $L$-functions of Maass cusp forms and Dirichlet $L$-functions which we also use in the proof of Theorem~\ref{thm:main-kloo}.

\begin{remark}
The condition in \eqref{eq:young-thm-intro} (and our extension in Appendix~\ref{sec:appendix}) that $f$ have level dividing the squarefree part of $d$ (which is odd unless $d=4q$ with $q\equiv 2\pmod{4}$)
is why we require a Kuznetsov formula that involves only coefficients of cusp forms in the plus space.
Under the Shimura correspondence, the plus spaces of half-integral weight forms on $\Gamma_0(4)$ are isomorphic as Hecke modules to spaces of weight $0$ cusp forms on $\Gamma_0(1)$, whereas
the full spaces on $\Gamma_0(4)$ lift to $\Gamma_0(2)$.
\end{remark}

The paper is organized as follows.
In Section~\ref{sec:geom-inv} we use the formulas of \cite{DIT-geom} to relate the geometric invariants to sums of Kloosterman sums, and we apply Theorem~\ref{thm:main-kloo} to prove Theorems~\ref{thm:main-thm-cycle-int} and \ref{thm:main-thm-surf}.
The remainder of the paper is dedicated to the proof of Theorem~\ref{thm:main-kloo}.
In Section~\ref{sec:background} we give some background on the spectrum of the hyperbolic Laplacian in half-integral weight.
In Section~\ref{sec:MVE} we prove general estimates for the mean square of Fourier coefficients of Maass cusp forms of half-integral weight with arbitrary multiplier system.
We prove Theorem~\ref{thm:thm-kuz-intro} in Section~\ref{sec:plus} and Theorem~\ref{thm:main-kloo} in Section~\ref{sec:proof}.
Finally, Appendix~\ref{sec:appendix} contains a sketch of the proof of Young's subconvexity result extended to even discriminants.

\subsection*{Acknowledgement} The authors thank the referee for their thorough and careful reading of an earlier version of the manuscript, as well as their helpful comments and suggestions. We are also indebted to Vaibhav Kalia and Balesh Kumar, who discovered some mistakes in the published version of this paper, and who carefully corrected these mistakes in their work \cite{kalia-kumar} (see the three footnotes in Sections~1 and 2 of this paper for details and corrections).

\section{Geometric invariants and Kloosterman sums} \label{sec:geom-inv}

In this section we relate the real quadratic invariants to Kloosterman sums and show how Theorems~\ref{thm:main-thm-cycle-int} and \ref{thm:main-thm-surf} follow from Theorem~\ref{thm:main-kloo}.
Actually, we will prove more general forms of the main theorems which allow for non-fundamental discriminants.
It is convenient to use binary quadratic forms
\begin{equation}
	Q(x,y) = [a,b,c] = ax^2+bxy+cy^2
\end{equation}
in place of ideal classes, as this point of view makes the generalization to arbitrary discriminants straightforward.
A discriminant is any integer $D\equiv 0,1\pmod{4}$.
A discriminant $D$ is fundamental if it is either odd and squarefree or if $D/4$ is squarefree and congruent to $2,3\pmod{4}$.
Fix a discriminant $D>1$ and a factorization $D=dd'$ into positive or negative discriminants $d,d'$ such that $d$ is fundamental.
Let $\sQ_D$ be the set of all integral binary quadratic forms $[a,b,c]$ with discriminant $b^2-4ac=D$.
The modular group $\Gamma_1$ acts on $\sQ_D$ in the usual way.
When $D$ is fundamental all forms in $\sQ_D$ are primitive (i.e.~$\gcd(a,b,c)=1$) and there is a simple correspondence between $\Gamma_1\backslash \sQ_D$ and $\Cl_D$ via
\begin{equation} \label{eq:quad-form-ideal}
	[a,b,c] \mapsto w\Z+\Z, \quad \text{ where } w = \mfrac{-b+\sqrt{D}}{2a},
\end{equation}
assuming $[a,b,c]$ is chosen in its class to have $a>0$.
If $D$ is fundamental and if $Q$ corresponds to $A$ via \eqref{eq:quad-form-ideal} then we define $\mathcal C_Q := \mathcal C_A$ and $\sF_Q := \sF_A$.
We extend this to arbitrary discriminants via $\mathcal C_{\delta Q} := \mathcal C_Q$ and $\sF_{\delta Q} := \sF_Q$.
There is a generalized genus character $\chi_d$ on $\Gamma_1\backslash \sQ_D$ (see \cite[I.2]{gkz}) associated to the factorization $D=dd'$ defined by
\begin{equation}
	\chi_d(Q) =
	\begin{dcases}
		\pmfrac{d}{n} & \text{ if } (a,b,c,d)=1 \text{ and } Q \text{ represents }n \text{ and }(d,n)=1, \\
		\ 0 & \text{ if } (a,b,c,d)>1.
	\end{dcases}
\end{equation}
If $D$ is fundamental then $\chi_d=\chi_{d'}$ is the usual genus character, and there is exactly one such character for each such factorization.

As mentioned in the introduction, we need to regularize\footnote{The published version of this paper uses the regularization\[\int_{\sF_A} j_m(z) \mfrac{dxdy}{y^2} := \lim_{Y\to\infty} \int_{\sF_{A,Y}} j_m(z) \mfrac{dxdy}{y^2},\] where $\sF_{A,Y}$ is the surface $\sF_A$ truncated at height $Y$. In \cite[Lemma~2.5]{kalia-kumar}, it is shown that this natural regularization is related to \eqref{eq:int-jm-nu} by \[\int_\sF j_m(z) \nu_Q(z) \mfrac{dxdy}{y^2}=-\lim_{Y\to\infty} \int_{\sF_{A,Y}} j_m(z) \mfrac{dxdy}{y^2}-8\pi {\mathrm m}_Q \sigma_1(m),\] where $\mathrm m_Q = \mathrm m_A = n_1 + \ldots + n_\ell$ (see the discussion following \eqref{eq:gamma-w-ts}).} the surface integrals of $j_m$.
Following \cite[(70)]{dit-kronecker}, for $z,\tau\in \H$ we define
\begin{equation}
	K(z,\tau) := \frac{j'(\tau)}{j(z)-j(\tau)},
\end{equation}
where $j':=\frac 1{2\pi i}\frac{dj}{dz}$.
This function transforms on $\Gamma_1$ with weight $0$ in $z$ and weight $2$ in $\tau$.
For each indefinite quadratic form $Q$ define
\begin{equation}
	\nu_Q(z) := \int_{\mathcal C_Q} K(z,\tau) d\tau.
\end{equation}
As explained in \cite{dit-kronecker}, 
for $z\notin C_Q$ the value of $\nu_Q(z)$ is an integer which counts with signs the number of crossings that a path from $i\infty$ to $z$ in $\sF$ makes with $\mathcal C_Q$.
Furthermore, $\nu_Q(z)$ is $\Gamma_1$-invariant and is identically zero for $\im z$ sufficiently large.
It follows that the integral
\begin{equation} \label{eq:int-jm-nu}
	\int_\sF j_m(z) \nu_Q(z) \mfrac{dxdy}{y^2}
\end{equation}
converges, providing the desired regularization.

The following theorem generalizes Theorems~\ref{thm:main-thm-cycle-int} and \ref{thm:main-thm-surf} to more general discriminants.

\begin{theorem} \label{thm:main-gen-disc}
For each positive nonsquare discriminant $D$, let $D=dd'$ be any factorization into discriminants such that $d$ is fundamental.
Let $m$ be any positive integer.
If $d$ is positive, we have
\begin{equation} \label{eq:gen-disc-cycle-int}
	\sum_{Q\in \Gamma_1\backslash \sQ_D} \chi_d(Q) \int_{\mathcal C_Q} j_m(z) \mfrac{|dz|}{y} = -24 \, \delta_{d} \, \sigma_1(m) \cdot 2h(D) \log \ep_D + O\left( m^{\frac {8}{9}}D^{\frac{13}{27}}(mD)^\ep \right),
\end{equation}
while if $d$ is negative, we have
\begin{equation} \label{eq:gen-disc-surf}
	\mfrac{1}{4\pi}\sum_{Q\in \Gamma_1\backslash \sQ_D} \chi_d(Q) \int_{\sF} j_m(z) \nu_Q(z) \mfrac{dxdy}{y^2} = -24 \sigma_1(m) \frac{h(d)h(d')}{\omega_d \omega_{d'}} + O\left( m^{\frac {8}{9}}D^{\frac{13}{27}}(mD)^\ep \right).
\end{equation}
\end{theorem}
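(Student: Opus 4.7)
The plan is to realize each twisted geometric invariant as a Fourier coefficient of a half-integral weight Poincar\'e series in the Kohnen plus space, expand it via its Kloosterman sum series, and then bound the Kloosterman tail using Theorem~\ref{thm:main-kloo}. Identities of this shape, tailored to the cycle-integral and surface-integral settings, have been worked out in \cite{DIT-cycle,DIT-geom} for fundamental $D$; I would record the appropriate specializations, with the sign of $k\in\{\pm\tfrac12\}$ chosen according to the sign of $d'$. The standard Petersson expansion of the resulting coefficient decomposes as an explicit ``non-Kloosterman'' term plus a tail of the form
\begin{equation}
	2\pi\sqrt{mD}\sum_{4\mid c}\frac{S_k^+((-1)^\lambda m,(-1)^\lambda D,c)}{c}\,B_k\pfrac{4\pi\sqrt{mD}}{c},
\end{equation}
where $B_k$ is a Bessel-type kernel (a $J$-Bessel in the cycle case, a Whittaker/$K$-Bessel specialization in the surface case).

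The first task is to verify that the non-Kloosterman piece produces exactly the stated main terms. The constant $-24$ is the regularized average of $j_1$ over $\sF$, and the factor $\sigma_1(m)$ arises from Hecke relations (the $j_m$ form a Hecke-like system inside $\C[j]$). The class-number factor enters through class number formulas for the relevant Dirichlet $L$-values: in the cycle-integral case the Eisenstein piece contains a residue proportional to $L(1,\chi_D)$ that evaluates to $2h(D)\log\ep_D$, and this residue is present only when $\chi_d$ is trivial, accounting for the $\delta_d$; in the surface-integral case the analogous piece is $L(1,\chi_d)L(1,\chi_{d'})$ and yields $h(d)h(d')/(\omega_d\omega_{d'})$ by the classical class number formula for imaginary quadratic fields.

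The second task is bounding the Kloosterman tail. Writing $(-1)^\lambda m=v^2 d'$ and $(-1)^\lambda D=w^2 d$, I would apply partial summation combined with Theorem~\ref{thm:main-kloo}, handling the $c$-sum dyadically. The kernel $B_k(4\pi\sqrt{mD}/c)$ has size of order $(c/\sqrt{mD})^{1/2}$ in the transitional range $c\ll\sqrt{mD}$ and decays (up to oscillation) beyond it, so at each dyadic level $c\sim C\leq \sqrt{mD}$ the contribution is of order
\begin{equation}
	(mD)^{1/4}\left(C^{2/3}+C^{1/2}(dd')^{2/9}(vw)^{1/3}\right)(mDC)^\ep,
\end{equation}
and a similar analysis handles $C\gg\sqrt{mD}$ using the Weil bound \eqref{eq:weil-bound}. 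Taking $C\asymp\sqrt{mD}$ and using $vw\leq\sqrt{mD/(dd')}$ together with $dd'\leq D$ gives an error of size $m^{8/9}D^{13/27}(mD)^\ep$.

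The main obstacle is the accurate identification of the main term: matching the non-Kloosterman piece from the Petersson-style expansion to the precise constant $-24\sigma_1(m)$ times the class-number factor requires careful bookkeeping of gamma factors, $L$-value residues, and Hecke relations, especially in the non-fundamental regime. The reduction from fundamental to general odd nonsquare $D$ is then carried out by inclusion-exclusion over the square divisors of the conductor quotient $D/\mathrm{disc}(\chi_d)$, exploiting the multiplicative structure of $\chi_d$ on $\sQ_D$; because Theorem~\ref{thm:main-kloo} is uniform in every parameter, this reduction preserves the stated error exponent.
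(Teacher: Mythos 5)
Your overall strategy---express the invariant via a Poincar\'e-series/Kloosterman expansion, identify the main term from the non-cuspidal piece, and bound the tail using Theorem~\ref{thm:main-kloo} plus partial summation---is the same as the paper's. But there is a genuine gap in the central step: you have misidentified which Kloosterman sums occur. The paper writes $j_m(z,1)=2\pi\sqrt m\,F_{-m}(z,1)-24\sigma_1(m)$ and, by Lemmas~4, 5 and 8 of \cite{DIT-geom}, the cuspidal piece produces the Weyl sums $T_m(d',d;c)$ paired with $J_{s-\frac12}\bigl(4\pi m\sqrt D/c\bigr)$; Kohnen's identity \eqref{eq:kohnen-identity} converts $T_m(d',d;c)$ into $S_{\pm 1/2}^+\bigl(d',(m/n)^2d;c/n\biggr)$. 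So the two Kloosterman indices are $d'$ and $m^2d$ (hence $v=1$, $w=m$ up to divisors of $m$ in Theorem~\ref{thm:main-kloo}, giving the term $D^{2/9}m^{1/3}$ of Theorem~\ref{thm:sum-T-sums}), and the Bessel argument is $4\pi m\sqrt D/c=4\pi\sqrt{d'\cdot m^2d}/c$. Your proposal instead pairs the indices $(m,D)$ with argument $4\pi\sqrt{mD}/c$ and parametrizes $(-1)^\lambda m=v^2d'$, $(-1)^\lambda D=w^2d$; this is not the structure that arises, and it changes the $m$-dependence of the bound in an essential way.

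This misidentification breaks your numerology. With your dyadic scheme cut at $C\asymp\sqrt{mD}$, the first term $(mD)^{1/4}C^{2/3}$ is already $(mD)^{7/12}$, and since $\tfrac{7}{12}>\tfrac{13}{27}$ this exceeds the claimed error in the $D$-aspect; the second term is even larger. The correct argument truncates much earlier, at $A=m^{8/9}D^{13/27}\ll m\sqrt D$ (well below the Bessel transition range), bounds the initial segment trivially by $A(mDA)^\ep$ using the Weil bound, and handles $c>A$ by partial summation against $S(t)\ll(t^{1/6}+D^{2/9}m^{1/3})t^\ep$ with the derivative bound $\bigl(\sqrt t\,\sin(4\pi m\sqrt D/t)\bigr)'\ll m\sqrt D\,t^{-3/2}$ (and the analogous kernel $f$ built from $\Ci$, $\Si$ in the surface case). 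Two smaller points: the main terms in the paper do not come from $L(1,\chi)$ residues but directly from the $-24\sigma_1(m)$ limit of the $F_0(z,s)$ term multiplied by the total geodesic length $2h(D)\log\ep_D$, respectively by Corollary~4 of \cite{dit-kronecker} for $\int_\sF\nu_Q\,d\mu$; and no fundamental-to-general reduction by inclusion--exclusion is needed, since the formulas of \cite{DIT-geom} and the generalized genus character already cover arbitrary odd nonsquare $D$.
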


To deduce Theorem~\ref{thm:main-gen-disc} from Theorem~\ref{thm:main-kloo} we require several results from \cite[\S 8--9]{DIT-geom}, which we borrow from freely here.
For $m\geq 0$, let $F_{-m}(z,s)$ denote the index $-m$ nonholomorphic Poincar\'e series 
and let
\begin{equation}
	j_m(z,s) := 2\pi m^{\frac 12} F_{-m}(z,s) - \frac{2\pi m^{1-s} \sigma_{2s-1}(m)}{\pi^{-(s+\frac 12)}\Gamma(s+\frac 12)\zeta(2s-1)} F_0(z,s).
\end{equation}
For $m\geq 1$ the Fourier expansion of $F_{-m}(z,s)$ shows that it has an analytic continuation to $\re(s)>\frac 34$. 
In particular, $F_{-m}(z,1)$ is holomorphic as a function of $z$.
Furthermore, $F_0(z,s)$ is the nonholomorphic Eisenstein series of weight $\frac 12$, and we have
\begin{equation}
	\lim_{s\to 1} \frac{2\pi m^{1-s} \sigma_{2s-1}(m)}{\pi^{-(s+\frac 12)}\Gamma(s+\frac 12)\zeta(2s-1)} F_0(z,s) = 24\sigma_1(m).
\end{equation}
A computation then shows that $j_m(z)=j_m(z,1)$ for $m\geq 1$ (see \cite[(4.11)]{DIT-cycle}).

Since the length of $\mathcal C_Q$ is $2\log \ep_D$ for every $Q\in \sQ_D$, we have
\begin{equation}
	\sum_{Q\in \Gamma_1\backslash \sQ_D} \chi_d(Q) \int_{\mathcal C_Q}  \mfrac{|dz|}{y} = 2\log\ep_D \sum_{Q\in \Gamma_1\backslash \sQ_D} \chi_d(Q) = 2\delta_d h(D)\log\ep_D.
\end{equation}
By Corollary~4 of \cite{dit-kronecker}, we have (note that $\mathscr w_d=2\omega_d$ in that paper)
\begin{equation}
	\mfrac{1}{4\pi} \sum_{Q\in \Gamma_1\backslash \sQ_D} \chi_d(Q) \int_{\sF} \nu_Q(z) \mfrac{dxdy}{y^2} = \frac{h(d)h(d')}{\omega_d \omega_d'}.
\end{equation}
So to prove Theorem~\ref{thm:main-gen-disc} it suffices to show that
\begin{equation} \label{eq:cyc-int-F}
	\sqrt m \sum_{Q\in \Gamma_1\backslash \sQ_D} \chi_d(Q) \int_{\mathcal C_Q} F_{-m}(z,1) \mfrac{|dz|}{y} \ll m^{\frac {8}{9}}D^{\frac{13}{27}}(mD)^\ep
\end{equation}
and
\begin{equation} \label{eq:surf-int-F}
	\sqrt m \sum_{Q\in \Gamma_1\backslash \sQ_D} \chi_d(Q) \int_{\sF} F_{-m}(z,1) \nu_Q(z) \mfrac{dxdy}{y^2} \ll m^{\frac {8}{9}}D^{\frac{13}{27}}(mD)^\ep.
\end{equation}

We will prove \eqref{eq:cyc-int-F}--\eqref{eq:surf-int-F} by relating the integrals of $F_{-m}(z,1)$ to the quadratic Weyl sums
\begin{equation}
	T_m(d',d;c) := \sum_{\substack{b\bmod c \\ b^2\equiv D\bmod c}} \chi_d\left(\left[\tfrac c4,b,\tfrac{b^2-D}{c}\right]\right) e\pmfrac{2mb}{c}.
\end{equation}
Here we are still assuming that $D=dd'$ with $d$ fundamental. 
Note that $T_m(d',d;c)=S_m(d',d;c)$ in the notation of \cite{DIT-geom}; we have changed the notation here to avoid confusion with the Kloosterman sums.
The Weyl sums are related to the plus space Kloosterman sums via Kohnen's identity
\begin{equation} \label{eq:kohnen-identity}
	T_m(d,d';c) = \sum_{n\mid (m,\frac c4)} \pmfrac dn \sqrt{\mfrac {2n}{c}} \, S_{\frac 12}^+\left(d',\mfrac{m^2}{n^2}d;\mfrac cn\right)
\end{equation}
(see Lemma~8 of \cite{DIT-geom}).
The Weil bound \eqref{eq:weil-bound} for Kloosterman sums shows that
\begin{equation}
	T_m(d,d';c) \ll \gcd(d',m^2d,c)^{\frac 12} c^{\varepsilon}.
\end{equation}
A direct corollary of Theorem~\ref{thm:main-kloo} is the following bound for the Weyl sums.

\begin{theorem} \label{thm:sum-T-sums}
Suppose that $D=dd'$ is a positive nonsquare discriminant and that $d$ is a fundamental discriminant. 
Then for any $m\geq 1$ we have
\begin{equation} \label{eq:sum-T-sums}
\sum_{4\mid c\leq x} \frac{T_m(d,d';c)}{\sqrt c} \ll \left( x^{\frac 16} + D^{\frac 29}m^{\frac 13} \right) (mDx)^\varepsilon.
\end{equation}
\end{theorem}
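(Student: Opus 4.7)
The plan is to reduce \eqref{eq:sum-T-sums} directly to Theorem~\ref{thm:main-kloo} via Kohnen's identity \eqref{eq:kohnen-identity}. Inserting \eqref{eq:kohnen-identity} and switching the order of summation (valid because $n\mid c/4$ is equivalent to $4\mid c/n$), the substitution $c=nc'$ rewrites the left-hand side of \eqref{eq:sum-T-sums} as
\begin{equation}
\sum_{n\mid m}\pmfrac{d}{n}\sqrt{\tfrac 2n}\sum_{\substack{4\mid c'\\ c'\leq x/n}}\frac{S^+_{1/2}\bigl(d',(m/n)^2 d;\,c'\bigr)}{c'}.
\end{equation}
The factors $\sqrt{2n/c}$ from \eqref{eq:kohnen-identity} and $1/\sqrt c$ from \eqref{eq:sum-T-sums} combine with $c=nc'$ to give the clean weight $\sqrt{2/n}/c'$ on the right.

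Next I would apply Theorem~\ref{thm:main-kloo} to each inner sum. Write $d'=v^2\tilde d'$ with $\tilde d'$ the odd fundamental discriminant of the same sign as $d'$ and $v\geq 1$; since $D=dd'>1$ is odd and nonsquare, both $d$ and $\tilde d'$ are odd fundamental discriminants not both equal to $1$. When $d>0$ take $k=\tfrac12$, so $\lambda=0$ and Theorem~\ref{thm:main-kloo} applies with discriminants $(\tilde d',d)$ and square-root parameters $(v,m/n)$. When $d<0$, first apply the symmetry \eqref{eq:kloo-neg-k} to rewrite $S^+_{1/2}(d',(m/n)^2 d;c')=S^+_{-1/2}(-d',-(m/n)^2 d;c')$ and invoke the theorem with $k=-\tfrac12$, $\lambda=-1$, and the same parameter identification. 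In either case the theorem yields the inner-sum bound
\begin{equation}
\sum_{\substack{4\mid c'\\ c'\leq x/n}}\frac{S^+_{k}\bigl(\cdots;\,c'\bigr)}{c'}\ll\bigl((x/n)^{1/6}+(|d\tilde d'|)^{2/9}(v\cdot m/n)^{1/3}\bigr)(mDx)^{\varepsilon}.
\end{equation}

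The last step is to simplify and sum over $n\mid m$. Since $|d\tilde d'|=D/v^2$, we have $|d\tilde d'|^{2/9}v^{1/3}=D^{2/9}v^{-1/9}\leq D^{2/9}$, so the inner bound collapses to $\bigl((x/n)^{1/6}+D^{2/9}(m/n)^{1/3}\bigr)(mDx)^{\varepsilon}$. Weighting by $n^{-1/2}$ and summing over $n\mid m$ produces the convergent divisor sums $\sum_{n\mid m}n^{-2/3}\ll m^{\varepsilon}$ and $\sum_{n\mid m}n^{-5/6}\ll m^{\varepsilon}$, and \eqref{eq:sum-T-sums} follows. The only delicate point in this deduction is the trade-off between the $v^{1/3}$ loss coming from the square part of $d'$ in Theorem~\ref{thm:main-kloo} and the $v^{-4/9}$ gain from reducing $d'$ to its fundamental part $\tilde d'$; everything else is routine bookkeeping.
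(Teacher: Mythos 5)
Your proposal is correct and follows essentially the same route as the paper, which simply cites Kohnen's identity \eqref{eq:kohnen-identity} together with the $k=\pm\frac12$ cases of Theorem~\ref{thm:main-kloo} (using \eqref{eq:kloo-neg-k} when $d,d'<0$). Your version just makes explicit the reindexing $c=nc'$, the divisor sum over $n\mid m$, and the reduction of $d'=v^2\tilde d'$ to its fundamental part with the bound $|d\tilde d'|^{2/9}v^{1/3}\leq D^{2/9}$ — all details the paper leaves implicit, and all handled correctly.
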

\begin{proof}
When $d,d'$ are positive this is immediate from \eqref{eq:kohnen-identity} and the $k=\frac 12$ case of Theorem~\ref{thm:main-kloo}.
When $d,d'$ are negative we apply \eqref{eq:kloo-neg-k} after \eqref{eq:kohnen-identity}.
Then the estimate \eqref{eq:sum-T-sums} follows from the $k=-\frac 12$ case of Theorem~\ref{thm:main-kloo}.
\end{proof}

We are now ready to prove \eqref{eq:cyc-int-F}--\eqref{eq:surf-int-F}.

\begin{proof}[Proof of \eqref{eq:cyc-int-F}]
Let $J_\nu(x)$ denote the $J$-Bessel function
\begin{equation} \label{eq:J-def}
	J_{\nu}(2x) = \sum_{k=0}^{\infty} (-1)^k \frac{x^{2k+\nu}}{k! \Gamma(\nu+k+1)}.
\end{equation}
By Lemma~4 of \cite{DIT-geom} we have
\begin{equation} \label{eq:cyc-int-F-T-s}
	\sum_{Q\in \Gamma \backslash \sQ_D} \chi_d(Q)\int_{\mathcal C_Q} F_{-m}(z,s) \mfrac{|dz|}{y} = 
	2^{s-\frac 12} \frac{\Gamma(\frac s2)^2}{\Gamma(s)} D^{\frac 14} \sum_{0<c\equiv 0(4)} \frac{T_m(d',d;c)}{\sqrt c} J_{s-\frac 12}\pmfrac{4\pi m \sqrt D}{c}
\end{equation}
for $\re(s)>1$.
By \eqref{eq:J-def} we find that $J_{\nu}(1/x) \ll x^{-\nu}$ and $[J_{\nu}(1/x)]' \ll x^{-\nu-1}$ as $x\to \infty$ uniformly for $\nu\in [\frac 12, 1]$.
Let $a=4\pi m\sqrt{D}$ and let $N\geq a$.
Suppose that $s\in [1,\frac 32]$.
Then by partial summation and Theorem~\ref{thm:sum-T-sums} we have
\begin{multline}
	\sum_{4\mid c\geq N} \frac{T_m(d',d;c)}{\sqrt c} J_{s-\frac 12}\pmfrac{4\pi m \sqrt D}{c} 
	\\ = \lim_{x\to \infty} S(x) J_{s-\frac 12}\pmfrac{a}{x} - S(N) J_{s-\frac 12}\pmfrac aN - \int_{N}^\infty S(t) \left(J_{s-\frac 12}\pmfrac at\right)' \, dt \ll_a N^{-\frac 13+\ep},
\end{multline}
where $S(x)$ denotes the partial sum on the left-hand side of \eqref{eq:sum-T-sums}.
It follows that the sum on the right-hand side of \eqref{eq:cyc-int-F-T-s} converges uniformly for $s\in [1,\frac 32]$.
Since $J_\frac 12(x) = \sqrt{2/\pi x} \, \sin x$ we conclude that
\begin{equation} \label{eq:cyc-int-F-T}
	\sqrt m \sum_{Q\in \Gamma \backslash \sQ_D} \chi_d(Q)\int_{\mathcal C_Q} F_{-m}(z,1) \mfrac{|dz|}{y} = 
	\sum_{0<c\equiv 0(4)} T_m(d',d;c) \sin \pmfrac{4\pi m \sqrt D}{c}.
\end{equation}
We split the sum at $c=A$ with $A\ll m\sqrt D$.
Estimating the initial segment $c\leq A$ trivially, we obtain
\begin{equation} \label{eq:sum-T-triv}
	\sum_{c\leq A} T_m(d',d;c) \sin \pmfrac{4\pi m \sqrt D}{c} \ll A(mDA)^\ep.
\end{equation}
Then by partial summation we have
\begin{equation}
	\sum_{c > A} T_m(d',d;c) \sin \pmfrac{4\pi m \sqrt D}{c} = -S(A)\sqrt A\,\sin \pmfrac{4\pi m \sqrt D}{A} - \int_A^\infty S(t) \left( \sqrt t \, \sin \pmfrac{4\pi m\sqrt D}{t} \right)'  dt,
\end{equation}
where $S(x)$ denotes the partial sum on the left-hand side of \eqref{eq:sum-T-sums}.
Since
\begin{equation}
	\left( \sqrt t \, \sin \pmfrac{4\pi m\sqrt D}{t} \right)' \ll \mfrac{m\sqrt D}{t^{\frac 32}},
\end{equation}
we conclude that
\begin{equation} \label{eq:sum-T-tail}
	\sum_{c > A} T_m(d',d;c) \sin \pmfrac{4\pi m \sqrt D}{c}
	\ll \left( mD^{\frac 12}A^{-\frac 13} + m^{\frac 43} D^{\frac{13}{18}} A^{-\frac 12} \right) (mDA)^\varepsilon.
\end{equation}
Letting $A=m^aD^b$, we choose $a=\frac 89$ and $b=\frac{13}{27}$ to balance the exponents in \eqref{eq:sum-T-triv} and \eqref{eq:sum-T-tail}.
This, together with \eqref{eq:cyc-int-F-T}, yields \eqref{eq:cyc-int-F}.
\end{proof}

\begin{proof}[Proof of \eqref{eq:surf-int-F}]
Define $\sF_Y:=\sF \cap \{z:\im z\leq Y\}$.
Let $Q\in \sQ_D$, and let $Y$ be sufficiently large so that $\nu_Q(z)=0$ for $\im z > Y$ and so that the image of $\mathcal C_Q$ in $\sF$ is contained in $\sF_Y$.
Then for $\re s>1$ we have
\begin{equation}
	\int_\sF F_{-m}(z,s) \nu_Q(z) \mfrac{dxdy}{y^2} 
	= \int_{\mathcal C_Q} \int_{\sF_Y} F_{-m}(z,s) K(\tau,z) \mfrac{dxdy}{y^2} \, d\tau.
\end{equation}
The function $F_{-m}(z,s)$ satisfies the relation
\begin{equation}
	\Delta_0 F_{-m}(z,s) := -4y^2 \partial_{\bar z} \partial_z F_{-m}(z,s) = s(1-s) F_{-m}(z,s)
\end{equation}
(see \S 8 of \cite{DIT-geom}).
So by the proof of Lemma~1 of \cite{dit-kronecker} (essentially an application of Stokes' theorem), we find that\footnote{The published version of this paper contains an error: the functions $g$ and $h$ are omitted from the right-hand side of \eqref{eq:fix1}. The details of this corrected computation, including an explicit description of the functions $g$ and $h$, can be found in the proof of Proposition~2.4 of \cite{kalia-kumar}.}
\begin{equation} \label{eq:fix1}
	\mfrac{s(1-s)}{2} \int_{\sF_Y} F_{-m}(z,s) K(\tau,z) \mfrac{dxdy}{y^2} = i \partial_\tau F_{-m}(\tau,s) + g(\tau,s,Y) + h(\tau),
\end{equation}
for some functions $g$ and $h$ with $g(\tau,s,Y) \to 0$ as $Y\to \infty$.
It follows that
\begin{equation}
	\mfrac{s(1-s)}{2} \int_{\sF} F_{-m}(z,s)\nu_Q(z) \mfrac{dxdy}{y^2} = \int_{\mathcal C_Q} i \partial_z F_{-m}(z,s) dz + h(\tau).
\end{equation}
Differentiating with respect to $s$ and setting $s=1$ we conclude that
\begin{equation}
	\int_{\sF} F_{-m}(z,1) \nu_Q(z) \mfrac{dxdy}{y^2} = -2 \partial_s \int_{\mathcal C_Q} i\partial_z \, F_{-m}(z,s) \, dz \Big|_{s=1}.
\end{equation}

By Lemma~5 of \cite{DIT-geom} we have
\begin{equation} \label{eq:surf-int-F-T-s}
	\sum_{Q\in \Gamma \backslash \sQ_D} \chi_d(Q)\int_{\mathcal C_Q} i\partial_z F_{-m}(z,s) dz = 
	2^{s-\frac 12} \frac{\Gamma(\frac {s+1}2)^2}{\Gamma(s)} D^{\frac 14} \sum_{0<c\equiv 0(4)} \frac{T_m(d',d;c)}{\sqrt c} J_{s-\frac 12}\pmfrac{4\pi m \sqrt D}{c}.
\end{equation}
A straightforward computation involving \eqref{eq:J-def} shows that, uniformly for $s\in [1,\frac 32]$, we have
\begin{equation}
	\partial_s \left[ 2^{s-\frac 12} \frac{\Gamma(\frac {s+1}2)^2}{\Gamma(s)} J_{s-\frac 12}(x) \right]  \ll x^{s-\frac 12} \, |\!\log x| \quad \text{ as } \quad x\to0^+.
\end{equation}
Thus, an argument involving partial summation, as in the proof of \eqref{eq:cyc-int-F}, shows that we are justified in setting $s=1$, and we obtain
\begin{equation}
	\sqrt m \sum_{Q\in \Gamma \backslash \sQ_D} \chi_d(Q)\int_{\sF} F_{-m}(z,1) \nu_Q(z) \mfrac{dxdy}{y^2} = -\mfrac{2}{\pi} \sum_{0<c\equiv 0(4)} T_m(d',d;c) f \pmfrac{4\pi m \sqrt D}{c},
\end{equation}
where
\begin{equation}
	f(x) := \Ci(2x) \sin(x) - \Si(2x) \cos(x) + \log(2) \sin(x)
\end{equation}
and $\Ci$, $\Si$ are the cosine and sine integrals, respectively.
The remainder of the proof is quite similar to the proof of \eqref{eq:cyc-int-F} because we have
\begin{equation}
	f(x) \ll \min\{1,x|\!\log x|\} \quad \text{ and } \quad \left(\sqrt t \, f \pmfrac{4\pi m \sqrt D}{t} \right)' \ll \mfrac{m\sqrt D}{t^{\frac 32}} (mDt)^\ep.
\end{equation}
We omit the details.
\end{proof}

\section{Background} \label{sec:background}

In this section we recall several facts about automorphic functions which transform according to multiplier systems of half-integral weight $k$, and the spectrum of the hyperbolic Laplacian $\Delta_k$ in this setting.
For more details see \cite{duke-friedlander-iwaniec,sarnak-additive,proskurin-new,aa-kloosterman} along with the original papers of Maass \cite{maass1,maass2}, Roelcke \cite{roelcke2}, and Selberg \cite{selberg-harmonic,selberg-estimation}.

Let $\Gamma = \Gamma_0(N)$ for some $N\geq 1$, and let $k$ be a real number.
We say that $\nu:\Gamma\to \C^\times$ is a multiplier system of weight $k$ if
\begin{enumerate}[\hspace{1.5em}(i)]\setlength\itemsep{.4em} 
	\item $|\nu|=1$,
	\item $\nu(-I)=e^{-\pi i k}$, and
	\item $\nu(\gamma_1 \gamma_2) = w(\gamma_1,\gamma_2) \nu(\gamma_1)\nu(\gamma_2) $ for all $\gamma_1,\gamma_2\in \Gamma$,
	where
	\begin{equation} \label{eq:w-def}
		w(\gamma_1,\gamma_2) = j(\gamma_2,z)^k j(\gamma_1,\gamma_2z)^k j(\gamma_1\gamma_2,z)^{-k},
	\end{equation}
	and $j(\gamma,z)$ is the automorphy factor
	\begin{equation}
		j(\gamma,z) := \mfrac{cz+d}{|cz+d|} = e^{i\arg(cz+d)}.
	\end{equation}
\end{enumerate}
If $\nu$ is a multiplier system of weight $k$, then
 $\bar\nu$ is a multiplier system of weight $-k$.

The group $\SL_2(\R)$ acts on $\H$ via $\pmatrix abcd z = \frac{az+b}{cz+d}$.
The cusps of $\Gamma$ are those points in the extended upper half-plane $\H^*$ which are fixed by parabolic elements of $\Gamma$.
Given a cusp $\a$ of $\Gamma$ let $\Gamma_\a:=\{\gamma\in\Gamma:\gamma\a=\a\}$ denote its stabilizer in $\Gamma$, and let $\sigma_\a$ denote any element of $\SL_2(\R)$ satisfying $\sigma_\a \infty = \a$ and $\sigma_\a^{-1}\Gamma_\a\sigma_\a=\Gamma_\infty$.
Define $\kappa_{\a} = \kappa_{\nu,\a}$ by the conditions
\[
	\nu\left(\sigma_\a\pmatrix 1101 \sigma_\a^{-1} \right) = e(-\kappa_{\a}) \quad \text{ and } \quad 0\leq \kappa_\a <1.
\]
We say that $\a$ is singular with respect to $\nu$ if $\nu$ is trivial on $\Gamma_\a$, that is, if $\kappa_{\nu,\a}=0$.
Note that if $\kappa_{\nu,\a}>0$ then
\begin{equation} \label{eq:kappa-nu-bar}
	\kappa_{\bar\nu,\a} = 1-\kappa_{\nu,\a}.
\end{equation}

We are primarily interested in the multiplier system $\nu_\theta$ of weight $\frac 12$ (and its conjugate $\bar\nu_\theta = \nu_\theta^{-1}$ of weight $-\frac 12$) on $\Gamma_0(4)$ defined by
\[
	\theta(\gamma z) = \nu_\theta(\gamma) \sqrt{cz+d} \, \theta(z),
\]
where
\[
	\theta(z) := \sum_{n\in\Z} e(n^2 z).
\]
Explicitly, we have
\begin{equation} \label{eq:def-theta-mult}
	\nu_\theta \ppMatrix **cd = \pfrac cd \varepsilon_d^{-1},
\end{equation}
where $\ptfrac\cdot\cdot$ is the extension of the Kronecker symbol given e.g. in \cite{shimura} and
\[
	\varepsilon_d = \pmfrac{-1}d^{\frac 12} =
	\begin{cases}
		1 & \text{ if }d\equiv 1\pmod 4, \\
		i & \text{ if }d\equiv 3\pmod 4.
	\end{cases}
\]

For $\gamma\in \SL_2(\R)$ we define the weight $k$ slash operator   by
\[
	f\sl_k \gamma := j(\gamma,z)^{-k} f(\gamma z).
\]
The weight $k$ hyperbolic Laplacian
\begin{equation*}
	\Delta_k := y^2 \bigg( \frac{\partial^2}{\partial x^2} + \frac{\partial^2}{\partial y^2} \bigg) - iky \frac{\partial}{\partial x}
\end{equation*}
 commutes with the weight $k$ slash operator for every $\gamma\in \SL_2(\R)$.
A real analytic function $f:\H\to\C$ is an eigenfunction of $\Delta_k$ with eigenvalue $\lambda$ if
\begin{equation} \label{eq:delta-k-f-lambda-f}
	\Delta_k f + \lambda f = 0.
\end{equation}
If $f$ satisfies \eqref{eq:delta-k-f-lambda-f} then for 
 notational convenience we write
\[
	\lambda = \mfrac 14 + r^2,
\]
and we refer to $r$ as the spectral parameter of $f$.

A function $f:\H\to\C$ is automorphic of weight $k$ and multiplier $\nu$ for $\Gamma$ if
\begin{equation} \label{eq:transformation-law}
	f \sl_k \gamma = \nu(\gamma) f \qquad \text{ for all }\gamma\in\Gamma.
\end{equation}
Let $\mathcal{A}_k(N,\nu)$ denote the space of all such functions.
A smooth automorphic function which is also an eigenfunction of $\Delta_k$ and which has at most polynomial growth at the cusps of $\Gamma$
is called a Maass form.
We let $\mathcal{A}_k(N,\nu,r)$ denote the vector space of Maass forms with spectral parameter $r$.
Complex conjugation $f\to \bar f$ gives a bijection $\mathcal{A}_k(N,\nu,r) \longleftrightarrow \mathcal{A}_{-k}(N,\bar\nu,r)$.

If $f \in \mathcal{A}_k(n,\nu,r)$, then $f\sl_k \sigma_\a$ satisfies $(f\sl_k \sigma_\a)(z+1)=e(-\kappa_\a)(f\sl_k \sigma_\a)(z)$.
For $n\in\Z$ define
\begin{equation}\label{eq:n_a-def}
	n_\a := n-\kappa_\a.
\end{equation}
Then $f$ has a Fourier expansion at the cusp $\a$ of the form
\begin{equation} \label{eq:fourier-exp-0}
	(f\sl_k \sigma_\a)(z) = \rho_{f,\a}(0)y^{\frac 12+ir} + \rho_{f,\a}'(0) y^{\frac 12-ir} + \sum_{n_\a\neq 0} \rho_{f,\a}(n) W_{\frac k2\sgn(n),ir}(4\pi|n_\a|y) e(n_\a x),
	\end{equation}
where $W_{\kappa,\mu}(y)$ is the $W$-Whittaker function.
When the weight is $0$, many authors normalize the Fourier coefficients so that $\rho_{f,\a}(n)$ is the coefficient of 
$\sqrt y \, K_{ir}(2\pi|n_\a|y)$, where $K_{\nu}(y)$ is the $K$-Bessel function.
Using the relation
\begin{equation}
	W_{0,\mu}(y) = \frac {\sqrt y}{\sqrt \pi} \, K_{\mu}(y/2),
\end{equation}
we see that this has the effect of multiplying $\rho_{f,\a}(n)$ by $2|n_\a|^{1/2}$.

Let $\mathcal{L}_k(\nu)$ 
denote the $L^2$-space of automorphic functions with respect to the Petersson inner product
\begin{equation}\label{eq:inner_prod_def}
	\langle f,g \rangle := \int_{\Gamma\backslash\H} f(z) \bar{g(z)} \, d\mu, \qquad d\mu := \mfrac{dx\,dy}{y^2},
\end{equation}
and let $\mathcal L_k(\nu,\lambda)$ denote the $\lambda$-eigenspace.
The spectrum of $\Delta_k$ is real and contained in $[\lambda_0(k),\infty)$, where $\lambda_0(k) := \frac {|k|}2 (1-\frac{|k|}2)$.
The minimal eigenvalue $\lambda_0(k)$ occurs if and only if there is a holomorphic modular form $F$ of weight $|k|$ and multiplier $\nu$, in which case
\[
	f_0(z) =
	\begin{cases}
		y^{\frac k2} \, F(z) & \text{ if }k\geq 0, \\
		y^{-\frac k2} \, \bar F(z) & \text{ if }k<0,
	\end{cases}
\]
is the corresponding eigenfunction.
When $k=\pm \frac 12$ and $\nu=\nu_\theta^{2k}$, the eigenspace $\mathcal L_k(\nu,\frac{3}{16})$ is one-dimensional, spanned by $y^{\frac 14}\theta(z)$ if $k=\frac 12$ and $y^{-\frac 14}\bar\theta(z)$ if $k=-\frac 12$.

The spectrum of $\Delta_k$ on $\mathcal{L}_k(\nu)$ consists of an absolutely continuous spectrum of multiplicity equal to the number of singular cusps, and a discrete spectrum of finite multiplicity.
The Eisenstein series, of which there is one for each singular cusp $\a$, give rise to the continuous spectrum, which  is bounded below by $1/4$.
Let $\a$ be a singular cusp.
The Eisenstein series for the cusp $\a$ is defined by
\begin{equation}
	E_\a(z,s) := \sum_{\gamma \in \Gamma_\a\backslash \Gamma_\infty} \bar\nu(\gamma) \bar w(\sigma_\a^{-1},\gamma) j(\sigma_\a^{-1}\gamma,z)^{-k} \im(\sigma_\a^{-1}\gamma z)^s.
\end{equation}
If $\b$ is any cusp, the Fourier expansion for $E_\a$ at the cusp $\b$ is given by
\begin{equation}
	j(\sigma_\b,z)^{-k} E_\a(z,s) = \delta_{\a=\b}y^s + \delta_{\kappa_\b=0}\phi_{\a\b}(0,s)y^{1-s} + \sum_{n_\b\neq 0} \phi_{\a\b}(n,s) W_{\frac k2 \sgn(n),s-\frac 12}(4\pi |n_\b|y) e(n_\b x),
\end{equation}
where
\begin{equation} \label{eq:phi-ab-def}
	\phi_{\a\b}(n,s) = 
	\begin{dcases}
		\frac{e(-\frac k4)\pi^s |n|^{s-1} }{\Gamma(s+\frac k2\sgn(n))}  \sum_{c\in \mathcal C(\a,\b)} \frac{S_{\a\b}(0,n,c,\nu)}{c^{2s}}  & \text{ if }n_\b \neq 0, \\
		\frac{e(-\frac k4)\pi 4^{1-s} \Gamma(2s-1)}{\Gamma(s+\frac k2)\Gamma(s-\frac k2)}\sum_{c\in \mathcal C(\a,\b)} \frac{S_{\a\b}(0,0,c,\nu)}{c^{2s}} & \text{ if }n_\b =0.
	\end{dcases}
\end{equation}
Here $\mathcal C(\a,\b)=\{c>0:\pmatrix **c* \in \sigma_\a^{-1}\Gamma \sigma_\b\}$ is the set of allowed moduli and $S_{\a\b}(m,n,c,\nu)$ is the Kloosterman sum (defined for any cusp pair $\a\b$)
\begin{equation} \label{eq:S-ab-ref}
	S_{\a\b}(m,n,c,\nu) := \sum_{\gamma = \pmatrix abcd \in \Gamma_\infty \backslash \sigma_\a^{-1}\Gamma\sigma_\b / \Gamma_\infty} \bar\nu_{\a\b}(\gamma) e\pfrac{m_\a a + n_\b d}{c},
\end{equation}
where
\begin{equation}
	\nu_{\a\b}(\gamma) = \nu(\sigma_\a \gamma \sigma_\b^{-1}) \frac{w(\sigma_\a \gamma \sigma_\b^{-1}, \sigma_\b)}{w(\sigma_\a,\gamma)}.
\end{equation}
The coefficients $\phi_{\a\b}(n,s)$ can be meromorphically continued to the entire $s$-plane and, in particular, are well-defined on the line $\re(s)=\frac 12$.
In Section~\ref{sec:plus} we will evaluate certain linear combinations of the coefficients $\phi_{\a\b}(n,\tfrac 12\pm ir)$ in terms of Dirichlet $L$-functions in the cases $k=\pm \frac12$ and $\nu=\nu_\theta^{2k}$.

Let $\mathcal{V}_k(\nu)$ denote the orthogonal complement in $\mathcal{L}_k(\nu)$ of the space generated by Eisenstein series.
The spectrum of $\Delta_k$ on $\mathcal{V}_k(\nu)$ is countable and of finite multiplicity.
The exceptional eigenvalues are those which lie in  $(\lambda_0(k),1/4)$ (conjecturally, the set of exceptional eigenvalues is empty).
The subspace $\mathcal{V}_k(\nu)$ consists of functions $f$ which decay exponentially at every cusp; equivalently, the zeroth Fourier coefficient of $f$ at each singular cusp vanishes.
Eigenfunctions of $\Delta_k$ in $\mathcal{V}_k(\nu)$ are called Maass cusp forms.

Let $\{f_j\}$ be an orthonormal 
basis of $\mathcal{V}_k(\nu)$, and for each $j$ let $\lambda_j = \frac 14+r_j^2$
denote the  Laplace eigenvalue and $\{\rho_{j,\a}(n)\}$ the Fourier coefficients.
Weyl's law describes the distribution of the spectral parameters $r_j$.
Theorem~2.28 of \cite{hejhal-stf2} shows that
\begin{equation}
	\sum_{0\leq r_j\leq T} 1 - \mfrac{1}{4\pi} \int_{-T}^T \mfrac{\varphi'}{\varphi}\Big(\mfrac 12+it\Big) \, dt 
	= \mfrac{\operatorname{vol}(\Gamma\backslash\H)}{4\pi} \, T^2 - \mfrac{K_0}{\pi} \, T\log T + O(T),
\end{equation}
where $\varphi(s)$ and $K_0$ are the determinant (see \cite[p. 298]{hejhal-stf2}) and dimension (see \cite[p. 281]{hejhal-stf2}), respectively, of the scattering matrix $\Phi(s)$ whose entries are given in terms of constant terms of Eisenstein series.

\section{An estimate for coefficients of Maass cusp forms}\label{sec:MVE}

In this section we prove a general theorem which applies to the Fourier coefficients at the cusp $\a$ of weight $\pm \frac12$ Maass cusp forms with multiplier $\nu$ for $\Gamma=\Gamma_0(N)$.
We assume that the bound
	\begin{equation} \label{eq:mve-weil-assumption}
		\sum_{c>0} \frac{|S_{\a\a}(n,n,c,\nu)|}{c^{1+\beta}} \ll_{\nu} n^{\varepsilon}
	\end{equation}
holds
for some $\beta=\beta_{\nu,\a} \in (1/2,1)$.
A similar estimate was proved in \cite[Theorem~3.1]{aa-kloosterman}, but the following theorem improves the bound in the $x$-aspect when $k=\frac12$.
The proof given here is also considerably shorter.

\begin{theorem} \label{thm:mve-general}
Suppose that $k=\pm \frac12$ and that $\nu$ is a multiplier system of weight $k$ which satisfies \eqref{eq:mve-weil-assumption}. 
Fix an orthonormal basis of cusp forms $\{u_j\}$ for $\mathcal{V}_k(\nu)$.
For each $j$, let $\rho_{j,\a}(n)$ denote the $n$-th Fourier coefficient of $u_j$ at $\a$ and let $r_j$ denote the spectral parameter.
Then for all $n\geq 1$ we have
\begin{equation} \label{eq:mve}
	n_\a \sum_{x\leq r_j\leq 2x} |\rho_{j,\a}(n)|^2 e^{-\pi r_j} \ll
		x^{-k}\left(x^2 + n^{\beta+\varepsilon} x^{1-2\beta} \log^\beta x\right).
\end{equation}
\end{theorem}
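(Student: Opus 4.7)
My plan is a classical Kuznetsov--Proskurin spectral argument: apply the half-integral weight trace formula on the diagonal $m=n$ at the cusp $\a$, with a nonnegative test function $h$ chosen so that its spectral transform dominates the indicator of the dyadic window $[x,2x]$. After discarding the nonnegative contributions of the Eisenstein continuous spectrum and (if necessary) the holomorphic part of the discrete spectrum, the spectral side of the formula gives an upper bound on
\[
n_\a \sum_{x \leq r_j \leq 2x} |\rho_{j,\a}(n)|^2 \, (\cosh \pi r_j)^{-1},
\]
which matches the left-hand side of \eqref{eq:mve} up to a bounded factor, since $e^{-\pi r_j} \asymp (\cosh \pi r_j)^{-1}$ for $r_j \geq x \gg 1$.

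On the geometric side, the identity produces a delta term together with a weighted Kloosterman sum of the shape
\[
c_{k,\nu}\!\int_{\R} h(r)\, r \tanh \pi r\, dr \;+\; \sum_{c>0} \frac{S_{\a\a}(n,n,c,\nu)}{c} \, H\!\left(\frac{4\pi n_\a}{c}\right),
\]
where $H$ is the weight-$k$ Bessel transform of $h$. With $h$ of unit size on a window of length $x$, the first integral is $O(x^2)$, and after tracking the weight-$k$ Gamma-factor normalization (the same factor that converts between the $W$-Whittaker and $K$-Bessel normalizations of Fourier coefficients), one recovers the main term $x^{-k} x^2$ in \eqref{eq:mve}.

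For the Kloosterman contribution, I would split the $c$-sum at a threshold $c_0 \asymp n_\a/x$. For $c \gg c_0$ the argument $4\pi n_\a/c$ of $H$ is small, so a Taylor expansion of the underlying Bessel kernel gives a power-law bound for $H$; combined with \eqref{eq:mve-weil-assumption} and partial summation in $c$, this range contributes $x^{-k} n^{\beta+\ep} x^{1-2\beta}$ times a logarithm coming from the dyadic decomposition. For $c \ll c_0$ the argument is large, and an oscillation (stationary-phase) bound on $H$ renders this range smaller by comparison. Optimizing the cutoff and summing over dyadic blocks yields the second term of \eqref{eq:mve}, with the factor $\log^\beta x$ accounting for the dyadic blocks.

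The principal obstacle is the fine analysis of $H$ in the transition regime $4\pi n_\a/c \asymp x$, where neither the small-argument expansion nor the large-argument asymptotic is individually sharp. The improvement over \cite[Theorem~3.1]{aa-kloosterman} in the $x$-aspect (when $k=\tfrac12$) must come from uniform Whittaker-function bounds, or equivalently a careful contour shift of the integral defining $H$, producing the correct $x^{-k}$ dependence rather than a cruder power of $x$; threading this dependence through the partial summation is the main technical chore.
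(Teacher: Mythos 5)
Your skeleton---set $m=n$ in a Kuznetsov-type formula at the cusp $\a$, use positivity to discard the continuous spectrum, and control the resulting Kloosterman term via \eqref{eq:mve-weil-assumption}---is the same one the paper uses (and the same as \cite[Theorem~3.1]{aa-kloosterman}, which this theorem is explicitly sharpening). But the step you defer as ``the principal obstacle,'' a uniform estimate for the Bessel transform of a spectrally localized test function, is where the entire content of the proof lives, and your sketch does not supply it. The paper avoids designing a nonnegative spectral test function altogether: it starts from Proskurin's pre-Kuznetsov formula (Lemma~\ref{lem:pre_kusnetsov}), whose kernel $(\cosh 2\pi r_j+\cosh 2\pi t)^{-1}$ is manifestly positive on the whole spectrum (including possible exceptional $r_j$), multiplies by Motohashi's weight $e^{-(t/K)^2}-e^{-(2t/K)^2}$, and integrates in $t$. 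The geometric-side transform $M(K,a)$ is then evaluated essentially exactly---Mellin--Barnes for $K_{2it}$ (Lemma~\ref{lem:int_L}), the Beta-integral for $\Gamma(s+it)\Gamma(s-it)$, and a reduction to the Fresnel-type integrals $f_k(u)\ll\min(1,1/u)$---and the decisive saving $M(K,a)\ll a\log K/K^{2}$ comes from the cancellation $\int(e^{-z^2/4}-\tfrac12 e^{-z^2/16})\,dz=0$, which licenses a second-order Taylor expansion of $f_k(2a\cosh\tfrac{z}{2K})-f_k(2a)$ (Proposition~\ref{prop:mve-int-est-pos}). There is no splitting of the $c$-sum and no stationary phase: one interpolates $\min(1,a\log K/K^2)\ll (a\log K/K^2)^\beta$ and invokes \eqref{eq:mve-weil-assumption} a single time. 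This is also exactly where the $x^{-k}$ and the improved $x$-aspect come from, via the lower bound $h_K(r)\gg e^{-\pi r}K^{k-1}$.

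Two concrete problems with your treatment of the geometric side. First, the cutoff $c_0\asymp n_\a/x$ is not the relevant scale: the bound $M(K,a)\ll\min(1,a\log K/K^2)$ transitions at $a\asymp x^2$, i.e.\ at $c\asymp n_\a/x^2$, and it is precisely this quadratic-in-$x$ saving that yields the term $n^{\beta+\ep}x^{1-2\beta}\log^\beta x$ in \eqref{eq:mve}. Second, the assertion that the range of small $c$ (large Bessel argument) is ``rendered smaller by comparison'' by an oscillation bound is unsubstantiated; in the paper that range is handled by nothing more than the uniform bound $M(K,a)\ll 1$, and it is harmless only because the hypothesis \eqref{eq:mve-weil-assumption} makes $\sum_c |S_{\a\a}(n,n,c,\nu)|c^{-1-\beta}$ convergent---not because of any additional savings from oscillation. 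Even the bound $M(K,a)\ll 1$ is not free here: for $k=\tfrac12$ it requires the Fresnel-integral identity for $f_{1/2}$, since the defining integrals are not absolutely convergent. So while your plan points at the right objects, as written it would not recover the exponents in \eqref{eq:mve} without an estimate equivalent to Proposition~\ref{prop:mve-int-est-pos}, which is the part you have left open.
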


We begin with an auxiliary version of Kuznetsov's formula  (\cite[\S 5]{kuznetsov}) which is Lemma 3 of \cite{proskurin-new} with $m=n$, $t\mapsto 2t$, and $\sigma=1$ (see \cite[Section~3]{aa-kloosterman} for justification of the latter).
While Proskurin assumes that $k>0$ throughout his paper, this lemma is still valid for $k<0$ by the same proof, and straightforward modifications give the result for an arbitrary cusp $\a$.

\begin{lemma}\label{lem:pre_kusnetsov}
With the assumptions of Theorem~\ref{thm:mve-general}, and for any $t\in\R$ we have
\begin{multline} \label{eq:proskurin-lemma-3}
	\frac{2\pi^2 n_\a}{|\Gamma(1-\frac k2+it)|^2} \left[ \sum_{r_j} \frac{|\rho_{j,\a}(n)|^2}{\cosh 2\pi r_j+\cosh 2\pi t}   \right. \\
	 + \left. \frac{1}{4} \sum_{\c} \int_{-\infty}^\infty \frac{\left|\phi_{\c\a}(n,\tfrac 12+ir)\right|^2}{(\cosh 2\pi r+\cosh 2\pi t)|\Gamma(\frac{k+1}{2}+ir)|^2} \, dr \right] \\
	= \frac{1}{4\pi} + \frac{2n_\a}{i^{k+1}}\sum_{c>0} \frac{S_{\a\a}(n,n,c,\nu)}{c^{2}} \int_L K_{2it} \left(\mfrac{4\pi n_\a}{c} \, q\right) q^{k-1} \, dq ,
\end{multline}
where $\sum_{\c}$ is a sum over singular cusps, and
 $L$ is the semicircular contour $|q|=1$ with $\re(q)>0$, from $-i$ to $i$.
\end{lemma}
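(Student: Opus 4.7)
The plan is to derive \eqref{eq:proskurin-lemma-3} by forming a weight-$k$ Poincaré series at the cusp $\a$ whose seed depends on the parameter $t$, and then computing its Petersson norm in two ways --- spectrally and by unfolding --- as in Kuznetsov's original argument \cite{kuznetsov} and its adaptation to arbitrary weight and multiplier by Proskurin \cite{proskurin-new}. Concretely, I would work with the Poincaré series
\[
P_{\a,n}(z; s, t) := \sum_{\gamma \in \Gamma_\a \backslash \Gamma} \overline{\nu(\gamma)}\,\overline{w(\sigma_\a^{-1},\gamma)}\, j(\sigma_\a^{-1}\gamma, z)^{-k}\, \varphi_{s,t}(\sigma_\a^{-1}\gamma z),
\]
whose seed is a weight-$k$ Whittaker-type function $\varphi_{s,t}(z) = y^{s-\frac{1}{2}} W_{\frac{k}{2},\, it}(4\pi n_\a y)\, e(n_\a x)$ (setting $s=1$ at the end, corresponding to the $\sigma = 1$ specialization of Proskurin's lemma). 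The point of using a $W$-Whittaker function rather than a bare exponential as the seed is that the Mellin inner product of two such Whittaker functions with indices $it$ and $ir$ evaluates, through a classical Mellin--Barnes computation, to a ratio of gamma functions that, after recombining via the reflection formula, yields exactly $\pi/\bigl(|\Gamma(1-\tfrac k2+it)|^2 (\cosh 2\pi r + \cosh 2\pi t)\bigr)$; this is the source of the precise spectral weights on the left-hand side.

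The spectral side is obtained by Parseval's identity against the orthonormal basis $\{u_j\} \cup \{E_\c(\cdot, \tfrac12 + ir)\}_\c$ of $\mathcal L_k(\nu)$. Each projection $\langle P_{\a,n}, u_j\rangle$ unfolds by the Rankin--Selberg trick to a single integral against the $n$-th Fourier coefficient of $u_j$ at $\a$, isolating $\overline{\rho_{j,\a}(n)}$ multiplied by the Whittaker inner product described above; the Eisenstein analogue produces $\phi_{\c\a}(n, \tfrac12 + ir)$ together with the extra factor $|\Gamma(\tfrac{k+1}{2} + ir)|^{-2}$ coming from the $W$-Whittaker function normalization in the Eisenstein expansion. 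The unfolding side is obtained by expanding one copy of $P_{\a,n}$ against itself using the Bruhat decomposition of $\sigma_\a^{-1}\Gamma\sigma_\a$: the identity double coset contributes the diagonal term $1/(4\pi)$ from the $L^2$-norm of $\varphi_{s,t}$, while the nontrivial cosets parametrize $\sum_c c^{-2} S_{\a\a}(n,n,c,\nu)$ times a double Bessel integral that, after converting the product of two $W$-Whittaker functions into a Mellin--Barnes representation of $K_{2it}$, collapses to the single contour integral $\int_L K_{2it}(4\pi n_\a q/c)\, q^{k-1}\, dq$ on the right-hand side.

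The main obstacles are: (a) executing the Mellin--Barnes identity that converts the inner product of two $W_{k/2,\, i\tau}$ Whittaker functions into the precise factor $(\cosh 2\pi r + \cosh 2\pi t)^{-1}/|\Gamma(1-\tfrac k2+it)|^2$ --- this is the substantive special-function calculation underlying Proskurin's lemma and follows the same algebra regardless of the sign of $k$; (b) for $k = -\tfrac12 < 0$, verifying convergence of the spectral expansion and of the unfolded Kloosterman series, which reduces to the asymptotics $W_{k/2, i\tau}(y) \sim y^{k/2} e^{-y/2}$ at infinity together with the Weil-type bound \eqref{eq:mve-weil-assumption}, and so requires no modification of Proskurin's argument beyond bookkeeping; and (c) adapting the Bruhat decomposition and the multiplier bookkeeping to an arbitrary cusp $\a$ rather than $\a = \infty$, which is routine once one tracks the cusp-pair multiplier $\nu_{\a\a}$ and the cocycle $w(\sigma_\a^{-1}, \gamma)$ consistently through each step and replaces $S(m,n,c,\nu)$ by $S_{\a\a}(n,n,c,\nu)$ wherever it arises.
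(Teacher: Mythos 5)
Your overall architecture --- compute the Petersson norm of a Poincar\'e series attached to the cusp $\a$ both spectrally and by Bruhat decomposition/unfolding, then continue to the boundary point --- is exactly Proskurin's, and that is all the paper itself invokes: its proof of the lemma is a citation to Lemma~3 of \cite{proskurin-new} with $m=n$, $t\mapsto 2t$, and $\sigma=1$ (the last specialization justified in \cite[Section~3]{aa-kloosterman}), together with the remarks that the same proof works for $k<0$ and for an arbitrary cusp. However, your central special-function claim (a) is incorrect, and with it your choice of seed. The weight $2\pi^2\big/\bigl(|\Gamma(1-\tfrac k2+it)|^2(\cosh 2\pi r_j+\cosh 2\pi t)\bigr)$ does \emph{not} come from the inner product of two $W$-Whittaker functions; it comes from the \emph{exponential} seed $y^{s}e(n_\a z)$. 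Indeed, unfolding $\langle U_n(\cdot,s),u_j\rangle$ against the expansion \eqref{eq:fourier-exp-0} produces the Mellin transform
\[
	\int_0^\infty e^{-y/2}\,y^{s-2}\,W_{\frac k2,\,ir_j}(y)\,dy
	=\frac{\Gamma\bigl(s-\tfrac 12+ir_j\bigr)\Gamma\bigl(s-\tfrac 12-ir_j\bigr)}{\Gamma\bigl(s-\tfrac k2\bigr)},
\]
and computing the norm at $s=1+it$ gives $\bigl|\Gamma(\tfrac12+i(t+r_j))\Gamma(\tfrac12+i(t-r_j))\bigr|^2\big/|\Gamma(1-\tfrac k2+it)|^2$, which equals the stated weight by the reflection formula and $\cosh\pi(t+r)\cosh\pi(t-r)=\tfrac12(\cosh 2\pi t+\cosh 2\pi r)$. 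By contrast, with your seed $y^{s-\frac12}W_{\frac k2,\,it}(4\pi n_\a y)e(n_\a x)$ the projection onto $u_j$ is $\int_0^\infty W_{\frac k2,\,it}\,W_{\frac k2,\,ir_j}\,y^{s-\frac 52}\,dy$, which for general $s$ is a ${}_3F_2$-type object with no Gamma-quotient evaluation, and at the one clean exponent (measure $y^{-2}dy$, i.e.\ $s=\tfrac12$, not $s=1$) evaluates to a \emph{difference} of reciprocal Gamma products carrying factors $\Gamma(\tfrac{1-k}2\pm ir_j)$ in the spectral variable --- structurally different from the weight in \eqref{eq:proskurin-lemma-3}, which has Gamma factors in $t$ only.

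The same misidentification propagates through the rest of your sketch. On the geometric side, a Whittaker seed makes the nonidentity Bruhat cells produce horocycle integrals of products of \emph{two} Whittaker functions, which do not collapse to Proskurin's kernel $\int_L K_{2it}(4\pi n_\a q/c)\,q^{k-1}\,dq$ by the Mellin--Barnes step you describe (that contour integral arises from the exponential seed), and the diagonal cell would contribute a $t$-dependent Whittaker norm rather than the $t$- and $n$-independent constant $\tfrac 1{4\pi}$, which with the exponential seed comes out of $n_\a\int_0^\infty e^{-4\pi n_\a y}\,dy$ up to the stated normalization. Finally, you treat ``setting $s=1$ at the end'' as routine, but $s=1$ sits on the boundary of absolute convergence of the Poincar\'e series, and justifying $\sigma=1$ is a genuine analytic step --- precisely the point for which the paper cites \cite[Section~3]{aa-kloosterman}; your obstacle (b), about convergence for $k<0$, does not address it. In short: right method and correct bookkeeping plan for the cusp $\a$ and for $k=-\tfrac12$, but the seed and the key inner-product evaluation must be replaced by the exponential-seed computation above, or the identity \eqref{eq:proskurin-lemma-3} will not be reproduced.
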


To prove Theorem~\ref{thm:mve-general} we follow the method of Motohashi \cite[Section~2]{motohashi-fourth}.
We begin by evaluating the integral on the right-hand side of \eqref{eq:proskurin-lemma-3} via the following lemma.
For the remainder of this section we frequently use the notation $\int_{(\xi)}$ to denote $\int_{\xi-i\infty}^{\xi+i\infty}$.

\begin{lemma} \label{lem:int_L}
Let $k=\pm \frac 12$.
Suppose that $a>0$, $\xi>\frac k2$. Then
\begin{equation}
	2\int_L K_{2it}(2aq) q^{k-1} \, dq = \frac 1{2\pi} \int_{(\xi)} \frac{\sin(\pi s-\frac{\pi k}{2})}{s-\frac k2} \Gamma(s+it) \Gamma(s-it) a^{-2s} \, ds.
\end{equation}
\end{lemma}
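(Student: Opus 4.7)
The plan is to invoke the Mellin--Barnes representation of the $K$-Bessel function and reduce the right-hand side of the claim to an elementary arc-integral computation. Concretely, for real $t$ and any $\sigma > 0$ the symmetric Mellin--Barnes formula
\begin{equation}
K_{2it}(2aq) = \frac{1}{4\pi i}\int_{(\sigma)} \Gamma(s+it)\Gamma(s-it)(aq)^{-2s}\,ds
\end{equation}
holds. Substituting this into the left-hand side of the claim and (formally) interchanging the order of integration yields
\begin{equation}
2\int_L K_{2it}(2aq)q^{k-1}\,dq = \frac{1}{2\pi i}\int_{(\sigma)} \Gamma(s+it)\Gamma(s-it)\,a^{-2s}\left(\int_L q^{k-1-2s}\,dq\right) ds.
\end{equation}

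The inner integral is explicit. Parametrizing $q = e^{i\theta}$ with $\theta\in[-\pi/2,\pi/2]$,
\begin{equation}
\int_L q^{k-1-2s}\,dq = i\int_{-\pi/2}^{\pi/2} e^{i\theta(k-2s)}\,d\theta = \frac{2i\sin\bigl(\tfrac{\pi(k-2s)}{2}\bigr)}{k-2s} = \frac{i\sin(\pi s - \tfrac{\pi k}{2})}{s - \tfrac{k}{2}}.
\end{equation}
Plugging this in, the $i$ cancels the $i$ in $1/(2\pi i)$, producing the desired identity initially with contour $\re(s) = \sigma > 0$. Extension to the full range $\xi > k/2$ follows by contour shifting: the integrand on the right is holomorphic at $s = k/2$, because $\sin(\pi s - \pi k/2)$ has a simple zero there which cancels the pole of $1/(s-k/2)$, while the $\Gamma$ factors are holomorphic throughout $\re(s) > 0$ (for real $t$ the relevant $\Gamma$-poles sit in $\re s \leq 0$). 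Since $k/2 = \pm 1/4$, the shift from any small $\sigma > 0$ to any $\xi > k/2$ crosses no singularities.

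The main technical obstacle is justifying the interchange of integrals, which is delicate because the Mellin--Barnes integral is not uniformly absolutely convergent for $q \in L$. Indeed, on $L$ one has $|q^{-2s}| = e^{2\theta\,\im s} \leq e^{\pi|\im s|}$, while Stirling gives $|\Gamma(s+it)\Gamma(s-it)| \sim |\im s|^{2\sigma-1} e^{-\pi|\im s|}$, so the two exponential factors cancel and the product is only polynomially controlled in $|\im s|$. I would handle this by inserting a Gaussian mollifier $e^{-\varepsilon s^2}$ into the $s$-integral, applying Fubini to the regularized double integral, and then passing to the limit $\varepsilon \to 0^+$. In the limit the right-hand side remains absolutely convergent for $\xi \in (k/2, 1/2)$, since the new factor $\sin(\pi s - \pi k/2)/(s-k/2)$ grows like $e^{\pi|\im s|}/|\im s|$, improving the effective decay of the integrand to $|\im s|^{2\xi-2}$.
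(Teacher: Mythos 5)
Your argument is correct and essentially identical to the paper's: both start from the Mellin--Barnes representation of $K_{2it}$ (yours in the symmetric form, the paper's in the asymmetric form $\Gamma(s)\Gamma(s-2it)z^{2it-2s}$ followed by the substitution $s\mapsto s+it$ at the end), interchange the two integrals, and evaluate $\int_L q^{k-1-2s}\,dq$ explicitly to produce the factor $\sin(\pi s-\tfrac{\pi k}{2})/(s-\tfrac k2)$. The paper performs the interchange without comment, so your Fubini discussion is a welcome supplement; the one small caution is that for $k=-\tfrac12$ a shift to $\xi\le 0$ would meet the poles of $\Gamma(s\pm it)$ on the line $\re s=0$, so the contour should in practice be kept in $\re s>0$, which is all the paper ever uses.
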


\begin{proof}
For any $\xi>0$ we have the Mellin-Barnes integral representation \cite[(10.32.13)]{nist}
\begin{equation}
	2 K_{2it}(2z) = \frac{1}{2\pi i} \int_{(\xi)} \Gamma(s) \Gamma(s-2it) z^{2it-2s} \, ds,
\end{equation}
which is valid for $|\arg z|<\frac\pi 2$.
It follows that
\begin{align*}
	2\int_L K_{2it}(2aq) q^{k-1} \, dq 
	&= \frac{1}{2\pi i} \int_{(\xi)} \Gamma(s) \Gamma(s-2it) a^{2it-2s} \int_{L} q^{2it-2s+k-1} \, dq\, ds \\
	&= \frac{1}{2\pi} \int_{(\xi)} \Gamma(s) \Gamma(s-2it) a^{2it-2s} \frac{\sin(\pi(s-it-\frac{k}{2}))}{s-it-\frac{k}{2}} \, ds.
\end{align*}
The lemma follows after replacing $s$ by $s+it$.
\end{proof}

Let $K$ be a large positive real number.
In \eqref{eq:proskurin-lemma-3} 
 we multiply by the positive weight
\[
	e^{-(t/K)^2} - e^{-(2t/K)^2}
\]
and integrate on $t$ over $\R$.
Applying Lemma \ref{lem:int_L} to the result (and noting that all terms on the left-hand side are positive), we obtain
\begin{equation} \label{eq:mve-1}
	n_\a \sum_{r_j} |a_j(n)|^2 h_K(r_j) 
	  \ll K
	+ \sum_{c>0} \frac{|S(n,n,c,\nu)|}{c} \left|M_k\left(K,\mfrac{2\pi n_\a}{c}\right)\right|,
\end{equation}
where
\begin{equation}\label{eq:hxrdef}
	h_K(r) := \int_{-\infty}^{\infty} \frac{ e^{-(t/K)^2} - e^{-(2t/K)^2} }{ |\Gamma(1-\frac k2+it)|^2 (\ch 2\pi r+\ch 2\pi t) } \, dt
\end{equation}
and
\begin{equation}\label{eq:ixadef}
M(K,a) = \int_{-\infty}^\infty \left(e^{-(t/K)^2} - e^{-(2t/K)^2}\right) \int_{(\xi)} \frac{\sin(\pi s-\frac{\pi k}{2})}{s-\frac k2} \Gamma(s+it) \Gamma(s-it) a^{1-2s} \, ds \, dt.
\end{equation}

We will make use of the following well-known estimate for oscillatory integrals (see, for instance, \cite[Chapter IV]{titchmarsh}).

\begin{lemma} \label{lem:first-deriv}
Suppose that $F$ and $G$ are real-valued functions on $[a,b]$ with $F$ differentiable, such that $G(x)/F'(x)$ is monotonic. 
If $|F'(x)/G(x)|\geq m>0$ then
\[
	\int_a^b G(x) e(F(x)) \, dx \ll \frac{1}{m}.
\]
\end{lemma}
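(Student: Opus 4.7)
The plan is to apply integration by parts, treating $e(F(x))$ as the factor to be antidifferentiated. Since $|F'(x)/G(x)| \geq m > 0$, the ratio $G/F'$ is well-defined and bounded in absolute value by $1/m$ throughout $[a,b]$, so I may write $e(F(x)) = \frac{1}{2\pi i F'(x)} \frac{d}{dx} e(F(x))$. Integration by parts then gives
\[
\int_a^b G(x) e(F(x)) \, dx = \left[\frac{G(x)}{2\pi i F'(x)} e(F(x))\right]_a^b - \frac{1}{2\pi i}\int_a^b \left(\frac{G(x)}{F'(x)}\right)' e(F(x)) \, dx.
\]

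Next I would bound each piece separately. The boundary term is immediate from $|G/F'| \leq 1/m$, contributing at most $\frac{1}{\pi m}$ in absolute value. For the remaining integral, the monotonicity hypothesis on $G/F'$ is the crucial ingredient: since $(G/F')'$ does not change sign on $[a,b]$, I can pull the absolute value inside and compute
\[
\left|\int_a^b \left(\frac{G(x)}{F'(x)}\right)' e(F(x)) \, dx\right| \leq \int_a^b \left|\left(\frac{G(x)}{F'(x)}\right)'\right| dx = \left|\frac{G(b)}{F'(b)} - \frac{G(a)}{F'(a)}\right| \leq \frac{2}{m}.
\]
Combining the two bounds yields $\int_a^b G(x) e(F(x)) \, dx \ll 1/m$, as claimed.

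This is a classical first derivative test (essentially the version in Titchmarsh that the authors cite), so I do not anticipate substantive obstacles. The only minor subtleties are verifying that $G/F'$ is differentiable wherever needed (implicit in the hypothesis that $G/F'$ is monotonic and $F$ is differentiable, together with the non-vanishing of $F'$ at points where $G \neq 0$) and interpreting the hypothesis $|F'/G| \geq m$ with the convention $1/0 = \infty$ at any isolated zeros of $G$. Otherwise the argument reduces to the single integration by parts above.
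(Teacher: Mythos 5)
Your argument is correct, and it is worth noting that the paper itself gives no proof of this lemma: it is stated as a "well-known estimate for oscillatory integrals" with a citation to Titchmarsh, Chapter~IV, so there is no in-paper proof to compare against. Your integration-by-parts computation is sound: the boundary term is at most $\frac{1}{\pi m}$ by the hypothesis $|G/F'|\le 1/m$, and since $(G/F')'$ has constant sign the remaining integral is bounded by the total variation $|G(b)/F'(b)-G(a)/F'(a)|\le 2/m$, giving a total of $\frac{2}{\pi m}\ll 1/m$.

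The one point worth flagging is the regularity issue you yourself raise at the end. The classical proof in Titchmarsh (the first derivative test, Lemma~4.3 there) avoids differentiating $G/F'$ altogether: one writes $G e(F)=\frac{G}{F'}\cdot F'\cos(2\pi F)+i\,\frac{G}{F'}\cdot F'\sin(2\pi F)$ and applies the second mean value theorem to each piece, using only that $G/F'$ is monotonic and bounded by $1/m$ and that $\int_c^d F'\cos(2\pi F)\,dx$ and $\int_c^d F'\sin(2\pi F)\,dx$ are each $O(1)$ by direct antidifferentiation. This yields the same bound without assuming $G/F'$ is differentiable or even continuous, which is the honest reading of the hypothesis "monotonic." Alternatively, your integration by parts can be made rigorous in the stated generality by interpreting $\int v\,du$ as a Riemann--Stieltjes integral against the monotone function $u=G/F'$ and bounding it by $\sup|v|\cdot\operatorname{Var}(u)\le\frac{1}{2\pi}\cdot\frac{2}{m}$. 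Either repair is routine; your argument as written is the standard one and is perfectly adequate for the way the lemma is used in the paper.
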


\begin{proposition} \label{prop:mve-int-est-pos}
Let $K$ be a large positive real number.
Suppose that $k=\pm \frac 12$ and let $M(K,a)$ be as above.
For $a>0$ we have
\begin{equation} \label{eq:M-k-a-est}
	M(K,a) \ll \min\left( 1, \frac{a \log K}{K^{2}} \right).
\end{equation}
\end{proposition}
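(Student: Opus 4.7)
The plan is to undo the contour representation of $M(K,a)$ by invoking Lemma~\ref{lem:int_L} in reverse. After swapping the $s$- and $t$-integrals (justified by the Gaussian decay in $t$ once $\xi>k/2$ is fixed), the lemma yields
\[
M(K,a)=4\pi a\int_L q^{k-1}\, I(K,2aq)\,dq, \qquad I(K,x):=\int_{-\infty}^\infty\bigl(e^{-(t/K)^2}-e^{-(2t/K)^2}\bigr)K_{2it}(x)\,dt.
\]
The problem reduces to bounding the Bessel transform $I(K,x)$ uniformly for $\re x\ge 0$ and then integrating along $L$.

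I would estimate $I(K,x)$ by inserting the classical representation $K_{2it}(x)=\int_0^\infty e^{-x\cosh u}\cos(2tu)\,du$ (valid for $\re x>0$), interchanging, and evaluating the two Gaussian Fourier transforms in $t$ explicitly. The result is
\[
I(K,x)=\sqrt\pi\,K\int_0^\infty e^{-x\cosh u}\,g_K(u)\,du, \qquad g_K(u):=e^{-K^2u^2}-\tfrac12 e^{-K^2u^2/4}.
\]
The crux is the cancellation $\int_0^\infty g_K(u)\,du=0$; subtracting $e^{-x}\int g_K$ and using $|e^{-x(\cosh u-1)}-1|\le |x|(\cosh u-1)$ for $\re x\ge 0$, together with a direct Gaussian calculation $\int_0^\infty(\cosh u-1)|g_K(u)|\,du\ll K^{-3}$, gives the refined estimate $|I(K,x)|\ll |x|\,e^{-\re x}/K^2$. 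The complementary trivial bound is $|I(K,x)|\ll e^{-\re x}$, from $\int_0^\infty|g_K|\ll K^{-1}$.

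Parametrizing $L$ by $q=e^{i\theta}$ with $\theta\in[-\pi/2,\pi/2]$, so that $\re(2aq)=2a\cos\theta$ and $|q|^{k-1}=1$, combining the two bounds on $I$ with the elementary inequality $\int_{-\pi/2}^{\pi/2}e^{-2a\cos\theta}\,d\theta\ll\min(1,1/a)$ yields the required $M(K,a)\ll\min(1,a\log K/K^2)$; in fact the approach gives the slightly stronger $\min(1,a/K^2)$. The main technical nuisance is that the Bessel integral representation for $K_{2it}$ fails to converge absolutely at the endpoints $q=\pm i$ of $L$ where $\re q=0$, so the interchange of integration is only legitimate on the interior of $L$; I would deform $L$ slightly inward into $\re q>0$, carry out all computations there, and pass to the limit.
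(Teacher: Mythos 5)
Your argument is correct, and it takes a genuinely different route from the paper's. The paper inserts the Beta-integral $\Gamma(s+it)\Gamma(s-it)=\Gamma(2s)\int_0^1 y^{s+it-1}(1-y)^{s-it-1}\,dy$, performs the $t$-integral to produce the Gaussian weight in $\log\frac{y}{1-y}$, evaluates the $s$-integral in closed form as $a f_k(u)$ with $f_k$ an incomplete sine/cosine (Fresnel) integral, and then exploits the vanishing total mass of the weight via a second-order Taylor expansion of $f_k(2a\cosh(z/2K))$ about $z=0$; the derivative bounds $f_k',f_k''\ll u^{-1}$ come from the first-derivative test, and the truncation of the $z$-integral at $c\sqrt{\log K}$ is what produces the $\log K$ in the final bound. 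You instead revert, via Lemma~\ref{lem:int_L}, to the contour form $M(K,a)=4\pi a\int_L q^{k-1}I(K,2aq)\,dq$, use the classical representation $K_{2it}(x)=\int_0^\infty e^{-x\cosh u}\cos(2tu)\,du$, and carry out the $t$-integral to get the weight $g_K$; the decisive cancellation $\int_0^\infty g_K=0$ is the same one the paper uses (in different variables), but you exploit it with only the first-order inequality $\lvert e^{-w}-1\rvert\le\lvert w\rvert$ for $\re w\ge 0$ and the elementary Gaussian moment $\int_0^\infty(\cosh u-1)\lvert g_K(u)\rvert\,du\ll K^{-3}$, so no truncation is needed and no $\log K$ appears. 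What your approach buys is a shorter, more elementary proof (no special functions $f_k$, no derivative estimates) and the marginally stronger conclusion $M(K,a)\ll\min(1,a/K^2)$, which of course implies the stated bound; what the paper's approach buys is that it never leaves the $s$-plane, so the only analytic subtlety is uniformity of the Taylor remainder. Your one genuine technical point --- that the $\cosh$-representation of $K_{2it}$ requires $\re x>0$ and hence fails at the endpoints $q=\pm i$ of $L$ --- is correctly identified and correctly resolved: the bounds $\lvert I(K,x)\rvert\ll e^{-\re x}\min(1,\lvert x\rvert/K^2)$ are uniform on $\re x>0$ and extend to the closed half-plane by continuity of $I$ (dominated convergence in the original $t$-integral, using the Gaussian to absorb the growth of $K_{2it}$ in $t$), so the limiting/deformation step is legitimate.
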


\begin{proof}
Starting with the integral representation \cite[(5.12.1)]{nist}
\begin{equation}
	\Gamma(s+it)\Gamma(s-it) = \Gamma(2s)\int_0^1 y^{s+it-1}(1-y)^{s-it-1} \, dy,
\end{equation}
we interchange the order of integration, putting the integral on $t$ inside, and find that the integral on $t$ equals
\begin{align}
	T(K,y) &= \int_{-\infty}^{\infty} \pfrac{y}{1-y}^{it} \left(e^{-(t/K)^2} - e^{-(2t/K)^2}\right) \, dt \\
		&= K e^{-\frac 14 K^2 \log^2 \pfrac{y}{1-y}} - \mfrac 12 K e^{-\frac 1{16} K^2 \log^2 \pfrac{y}{1-y} }.
\end{align}
Hence
\begin{equation} \label{eq:Mka-1}
	M(K,a) = \int_0^1 \frac{T(K,y)}{y(1-y)} \int_{(\xi)} \frac{\sin(\pi s-\frac{\pi k}{2})}{s-\frac k2} \Gamma(2s) [y(1-y)]^s a^{1-2s} \, ds \, dy.
\end{equation}
To evaluate the inner integral, we use that
\begin{equation}
	\frac{u^{k-2s}}{s-\frac k2} = 2\int_u^\infty t^{-2s+k-1} \, dt.
\end{equation}
Setting $u=a \,[y(1-y)]^{-\frac12}$, the integral on $s$ in \eqref{eq:Mka-1} equals
\begin{equation}
	2a u^{-k} \int_u^\infty t^{k-1} \int_{(\xi)} \sin(\pi s-\tfrac{\pi k}{2}) \Gamma(2s) t^{-2s} \, ds \, dt = a f_k(u),
\end{equation}
where
\begin{equation}
	f_k(u) = \cos\pmfrac{\pi k}{2} u^{-k} \int_u^\infty t^{k-1} \sin t \, dt -\sin\pmfrac{\pi k}2 u^{-k} \int_u^\infty t^{k-1}\cos t \, dt.
\end{equation}
Finally, we set $z=K\log\frac{y}{1-y}$ to obtain
\begin{equation}
	M(K,a) = a \int_{-\infty}^{\infty} \left( e^{-z^2/4} - \mfrac 12 e^{-z^2/16} \right) f_k\left(2a\cosh\pmfrac z{2K}\right) \, dz.
\end{equation}

We claim that $f_k(u) \ll \min(1,1/u)$.
For $u \geq 1$ this follows from Lemma~\ref{lem:first-deriv}.
Suppose that $u \leq 1$.
In the case $k=-\frac 12$, we have $f_k(u)\ll 1$ by estimating the integrals trivially.
When $k=\frac 12$ a computation shows that
\begin{equation}
	f_{\frac 12}(u) =  \frac{ \sqrt\pi \, C\big(\sqrt{2u/\pi} \big) - \sqrt\pi \, S\big(\sqrt{2u/\pi}\big)}{\sqrt u},
\end{equation}
where $C(x)$ and $S(x)$ are the Fresnel integrals \cite[\S 7.2]{nist}.
It follows that $f_{\frac 12}(u)\ll 1$.

From the estimate $f_k(u) \ll \min(1,1/u)$ it follows that 
\begin{equation}
	M(K,a) \ll 1.
\end{equation}
Now suppose that $a\ll K^2$.
In this case we add and subtract $f_k(2a)$ from the integrand and notice that 
\[
	\int_{-\infty}^{\infty} \left( e^{-z^2/4} - \mfrac 12 e^{-z^2/16} \right) \, dz = 0,
\]
so
\begin{equation}
	M(K,a) \ll a \int_0^\infty e^{-z^2/16} \left| f_k(2a) - f_k\left(2a \cosh\pmfrac z{2K}\right) \right| \, dz.
\end{equation}
Let $T=c\sqrt{\log K}$ with $c$ a large constant, and let $F(z) = f_k(2a) - f_k(2a \cosh z)$.
Then $F(0)=F'(0)=0$, so for $|z|\leq T/K$ we have
\begin{equation} \label{eq:F-z-bound}
	F(z) \ll z^2 \max_{|w|\leq T/K}|F''(w)|.
\end{equation}
Since
\begin{equation}
	F''(w) \ll a \cosh w |f_k'(2a \cosh w)| + a^2 \sinh^2 w |f_k''(2a \cosh w)|
\end{equation}
and, by Lemma~\ref{lem:first-deriv},
\begin{equation}
	f_k'(u), f_k''(u) \ll u^{-1},
\end{equation}
we conclude that 
\begin{equation} \label{eq:F''-w-bound}
	F''(w) \ll a \sinh(T/K)\tanh(T/K) \ll \frac{a T^2}{K^2} \ll T^2.
\end{equation}
By \eqref{eq:F-z-bound} and \eqref{eq:F''-w-bound} we have
\begin{equation}
	a \int_0^T e^{-z^2/16} \left|F\pmfrac{z}{2K}\right| \, dz \ll \frac{a T^2}{K^2} \int_0^\infty z^2 e^{-z^2/16} \, dz  \ll  \frac{a T^2}{K^2}
\end{equation}
and by $f_k(u)\ll 1$ we have
\begin{equation}
	a \int_T^\infty e^{-z^2/16} \left| f_k(2a) - f_k\left(2a \cosh \pmfrac z{2K}\right) \right| \, dz \ll a \int_T^\infty e^{-z^2/16} \, dz \ll a \, e^{-T^2/16}.
\end{equation}
With our choice of $T$ this yields \eqref{eq:M-k-a-est}.
\end{proof}

\begin{proof}[Proof of Theorem~\ref{thm:mve-general}]
First note that when $r\sim x$ we have $h_x(r) \gg e^{-\pi r} x^{k-1}$, where $h_x(r)$ is defined in \eqref{eq:hxrdef}, so
by \eqref{eq:mve-1} and positivity we have
\begin{equation}
n_\a x^{k} \sum_{x\leq r_j\leq 2x} |\rho_{j,\a}(n)| e^{-\pi r_j} \ll x^2 + x \sum_{c>0} \frac{|S_{\a\a}(n,n,c,\nu)|}{c} \left|M_k\left(x,\mfrac {2\pi n_\a}c \right)\right|.
\end{equation}
Let $\beta$ be as in \eqref{eq:mve-weil-assumption}.
By Proposition~\ref{prop:mve-int-est-pos} we have
\begin{equation}
	M_k(x,a) \ll \min\left(1,\frac{a \log x}{x^2}\right) \ll \frac{a^\beta \log^\beta x}{x^{2\beta}},
\end{equation}
from which it follows that
\begin{align}
	x \sum_{c>0} \frac{|S_{\a\a}(n,n,c,\nu)|}{c} \left|M_k\left(x,\mfrac {2\pi n_\a}c \right)\right| &\ll n_\a^{\beta} x^{1-2\beta} \log^\beta x \sum_{c>0} \frac{|S_{\a\a}(n,n,c,\nu)|}{c^{1+\beta}} \\
	&\ll n_\a^{\beta+\varepsilon} x^{1-2\beta} \log^\beta x.
\end{align}
The theorem follows.
\end{proof}

%%%%%%%%%%%%%%%%%%%%%%%%%%%%%%%%%%%%%%%%%%%%%%%%%%%%%%%%%%%%%%%%
%%%%%%%%%%%%%%%%%%%%%%%%%%%%%%%%%%%%%%%%%%%%%%%%%%%%%%%%%%%%%%%%
%%%%%%%% PLUS SPACE %%%%%%%%%%%%%%%%%%%%%%%%%%%%%%%%%%%%%%%%%%%%
%%%%%%%%%%%%%%%%%%%%%%%%%%%%%%%%%%%%%%%%%%%%%%%%%%%%%%%%%%%%%%%%
%%%%%%%%%%%%%%%%%%%%%%%%%%%%%%%%%%%%%%%%%%%%%%%%%%%%%%%%%%%%%%%%

\section{The Kuznetsov formula for Kohnen's plus space} \label{sec:plus}

In this section we define the plus spaces of holomorphic and Maass cusp forms, and we prove an analogue of Kuznetsov's formula relating the Kloosterman sums $S_k^+(m,n,c)$ to the Fourier coefficients of such forms.
For the remainder of the paper we specialize to the case $\Gamma=\Gamma_0(4)$ with $(k,\nu) = (\frac 12, \nu_{\theta})$ or $(-\frac 12, \bar\nu_{\theta})$.
We will often write $k=\lambda+\frac 12$, and
to simplify notation, we write $\mathcal V_k = \mathcal V_k(\nu)$ and $\mathcal S_\ell=\mathcal S_\ell(\nu)$, where $S_\ell(\nu)$ is the space of holomorphic cusp forms of weight $\ell$ and multiplier $\nu$.
We fix once and for all a set of inequivalent representatives for the cusps of $\Gamma$, namely $\infty$, $0$, and $\frac 12$,
with associated scaling matrices
\begin{equation}
	\sigma_\infty = \pMatrix 1001, \qquad
	\sigma_0 = \pMatrix0{-\frac12}20, \qquad
	\sigma_{\frac 12} = \pMatrix 1{-\frac12}20.
\end{equation}
Then
\begin{equation}
	\kappa_{\infty}=\kappa_0=0 \quad \text{ and } \quad \kappa_{\frac 12}=\mfrac{(-1)^\lambda3}{4}.
\end{equation}

Following Kohnen \cite{kohnen-1,kohnen-newforms} we define an operator $L$ on automorphic functions as follows.
If $f$ satisfies $f\sl_{k}\gamma = \nu(\gamma) f$ for all $\gamma\in \Gamma_0(4)$ then we define
\begin{equation}
	Lf := \frac{1}{2(1+i^{2k})} \sum_{w=0}^3 f\sl_{k} \pMatrix {1+w}{1/4}{4w}1.
\end{equation}
It is not difficult to show that $L$ maps Maass cusp forms to Maass cusp forms.
It follows from \cite{kohnen-1} (see also \cite{katok-sarnak}) that $L$ is self-adjoint, that it commutes with the Hecke operators $T_{p^2}$, and that it satisfies the equation
\begin{equation}
	(L-1)(L+\tfrac 12)=0
\end{equation}
(Kohnen proves this in the holomorphic case, but the necessary modifications are simple).
The space $\mathcal V_k$ decomposes as $\mathcal V_k=\mathcal V_k^+ \oplus \mathcal V_k^-$ where $\mathcal V_k^+$ is the eigenspace with eigenvalue $1$, and $\mathcal V_k^-$ is the eigenspace with eigenvalue $-\frac 12$.
For each $f\in \mathcal V_k$, we have $f\in \mathcal V_k^+$ if and only if $\rho_{f,\infty}(n)=0$ for $(-1)^\lambda n\equiv 2,3\pmod{4}$.
The following lemma describes the action of $L$ on Fourier expansions.

\begin{lemma} \label{lem:L-fourier}
Let $k=\pm \frac 12=\lambda+\frac 12$ and $\nu=\nu_\theta^{2k}$.
Suppose that $f \sl_{k}\gamma = \nu(\gamma) f$ for all $\gamma\in \Gamma$.
For each cusp $\a$ of $\Gamma$ write the Fourier expansion of $f$ as
\begin{equation}
	(f \sl_{k} \sigma_\a)(z) = \sum_{n\in \Z} c_{f,\a}(n,y) e(n_\a x).
\end{equation}
Then
\begin{equation}
	c_{Lf,\infty}(n,y) = 
	\begin{dcases}
		\mfrac 12 c_{f,\infty} (n,y) + \mfrac 1{2(1-i^{2k})} \, c_{f,\a} \left(\mfrac{n}4+\kappa_{\a},4y\right) & \text{ if } (-1)^\lambda n\equiv 0,1\pmod{4}, \\
		-\mfrac 12 c_{f,\infty} (n,y) & \text{ if }(-1)^\lambda n\equiv 2,3 \pmod{4},
	\end{dcases}
\end{equation}
where $\a=0$ if $n\equiv 0 \pmod 4$ and $\a=\frac 12$ if $n\equiv (-1)^\lambda \pmod{4}$.
\end{lemma}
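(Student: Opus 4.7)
The plan is to compute the $n$-th Fourier coefficient at $\infty$ of each summand $f\sl_k M_w$, where $M_w := \pmatrix{1+w}{1/4}{4w}{1}$, by factoring $M_w$ so as to expose one of the three scaling matrices $\sigma_\infty=I$, $\sigma_0$, $\sigma_{\frac 12}$. A direct matrix computation verifies
\begin{align*}
M_0 &= \pmatrix{1}{1/4}{0}{1}, & M_1 &= \sigma_{\frac 12}\pmatrix{2}{1/2}{0}{1/2}, \\
M_2 &= \gamma\pmatrix{1}{-1/4}{0}{1}, & M_3 &= \gamma\,\sigma_0\pmatrix{2}{1/2}{0}{1/2},
\end{align*}
where $\gamma := \pmatrix{3}{1}{8}{3}\in\Gamma_0(4)$ and the rightmost factor $\pmatrix{2}{1/2}{0}{1/2}$ acts on $\H$ as $z\mapsto 4z+1$. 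Using the transformation law $f\sl_k\gamma=\nu(\gamma)f$ together with the cocycle identity $f\sl_k(\gamma_1\gamma_2)=w(\gamma_1,\gamma_2)(f\sl_k\gamma_1)\sl_k\gamma_2$, each summand reduces to a constant multiple of $(f\sl_k\sigma_{\a_w})(N_w z)$ for the appropriate cusp $\a_w$ and upper-triangular $N_w$. A short calculation with the normalized automorphy factors $j(\gamma,z)=(cz+d)/|cz+d|$ shows that all the relevant cocycles $w(\cdot,\cdot)$ equal $1$ and that $\nu(\gamma)=\pmfrac{8}{3}\varepsilon_3^{-2k}=i^{2k}$.

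Collecting the contributions to the $n$-th Fourier coefficient at $\infty$: the $w=0$ and $w=2$ terms give $e(n/4)$ and $i^{2k}e(-n/4)$ times $c_{f,\infty}(n,y)$ respectively, and an elementary case analysis modulo $4$ shows their sum equals $+(1+i^{2k})c_{f,\infty}(n,y)$ when $(-1)^\lambda n\equiv 0,1\pmod 4$ and $-(1+i^{2k})c_{f,\infty}(n,y)$ otherwise. The $w=1$ term arises from expanding
\begin{equation*}
(f\sl_k\sigma_{\frac 12})(4z+1) = \sum_{n'}c_{f,\frac 12}(n',4y)\,e(n'_{\frac 12})\,e(4n'_{\frac 12}x);
\end{equation*}
frequency matching $4n'_{\frac 12}=n$ forces $n'=n/4+\kappa_{\frac 12}$, which is an integer precisely when $n\equiv(-1)^\lambda\pmod 4$, yielding the coefficient $c_{f,\frac 12}(n/4+\kappa_{\frac 12},4y)\,e(n/4)$. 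The $w=3$ term is analogous at cusp $0$: it survives only when $4\mid n$ and equals $i^{2k}\,c_{f,0}(n/4,4y)$. Dividing by the normalizing constant $2(1+i^{2k})$ and applying the elementary identities $e(n/4)/(1+i^{2k})=1/(1-i^{2k})$ (valid in the residue class $n\equiv(-1)^\lambda\bmod 4$) and $i^{2k}/(1+i^{2k})=1/(1-i^{2k})$ assembles the three cases stated in the lemma.

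The only delicate step is verifying that each of the half-integral-weight cocycles $w(\cdot,\cdot)$ appearing in the four factorizations is trivial. Writing each as a product of normalized automorphy factors, this reduces to checking that the principal arguments of $j(\gamma_1,\gamma_2 z)\,j(\gamma_2,z)$ and $j(\gamma_1\gamma_2,z)$ coincide, so that no spurious phase $e^{2\pi ik}$ is introduced when taking $k$-th powers. This is routine; one can evaluate at a convenient point such as $z=i$ to confirm the equality of the relevant square roots.
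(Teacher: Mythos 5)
Your proof is correct and follows essentially the same route as the paper's: factor each matrix $\pmatrix{1+w}{1/4}{4w}{1}$ through an element of $\Gamma_0(4)$ and one of the scaling matrices $\sigma_0$, $\sigma_{1/2}$, evaluate the multiplier on the $\Gamma_0(4)$ piece (getting $i^{2k}$), and match Fourier frequencies. The only differences are cosmetic choices of factorization (e.g.\ your $M_1=\sigma_{1/2}\pmatrix{2}{1/2}{0}{1/2}$ produces the phase $e(n/4)$ in place of the paper's $i^{2k}$, and these agree on the residue class $n\equiv(-1)^\lambda\pmod 4$), and your explicit attention to the triviality of the cocycles, which the paper leaves implicit.
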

\begin{proof}
Let $A_w = \pmatrix {1+w}{1/4}{4w}1$.
Since $A_2 = \pmatrix 3{-2}8{-5} \pmatrix1{3/4}01$ and $\nu(\pmatrix 3{-2}8{-5})=i^{2k}$ we have
\begin{align}
	f \sl_k A_0 + f \sl_k A_2 
	&= f(z+\tfrac 14) +  i^{2k} f(z+\tfrac 34) \\
	&= (1+i^{2k}) \sum_{(-1)^\lambda n\equiv 0,1(4)} c_{f,\infty}(n,y) e(nx) - (1+i^{2k}) \sum_{(-1)^\lambda n\equiv 2,3(4)} c_{f,\infty}(n,y) e(nx).
\end{align}
For $w=1,3$ we have $A_1 = \pmatrix {-1}1{-4}3 \sigma_{\frac 12} \pmatrix 200{1/2}$ and $A_3 = \pmatrix {-1}1{-4}3 \sigma_0 \pmatrix 200{1/2}$.
Since $\nu(\pmatrix {-1}1{-4}3)=i^{2k}$, we have
\begin{align}
	f \sl_k A_1 + f \sl_k A_3 
	&= i^{2k} (f\sl_k \sigma_0)(4z) + i^{2k} (f\sl_k \sigma_{\frac 12}(4z)) \\
	&= i^{2k}\sum_{n\equiv 0(4)} c_{f,0}(\tfrac n4,4y) e(nx) + i^{2k}\sum_{n\equiv (-1)^\lambda(4)} c_{f,\frac 12}(\tfrac {n}4+\kappa_{\frac 12},4y) e(nx).
\end{align}
The lemma follows.
\end{proof}

The analogue of $L$ for holomorphic cusp forms is defined as follows.
If for some $\ell$ we have $F(\gamma z) = \nu(\gamma)(cz+d)^\ell F(z)$ for all $\gamma\in \Gamma_0(4)$ then $f(z)=y^{\ell/2}F(z)$ satisfies $f\sl_\ell\gamma = \nu(\gamma) f$, and we define
\begin{equation}
	L^* F := y^{-\frac \ell2} L f.
\end{equation}
The plus space $\mathcal S_\ell^+$ of holomorphic cusp forms is defined as the subspace of $\mathcal S_\ell$ consisting of forms $F$ satisfying $L^*F=F$.
If $\rho_{F,\a}(n)$ is the $n$-th coefficient of $F$ at the cusp $\a$ then, in the notation of the previous lemma, we have $\rho_{F,\a}(\frac n4+\kappa_\a,4y)=\frac 12 c_{f,\a}(\frac n4+\kappa_\a,4y)$.
Therefore we have the following analogue of Lemma~\ref{lem:L-fourier}.

\begin{lemma}
Let $k=\pm \frac 12=\lambda+\frac 12$ and $\nu=\nu_\theta^{2k}$.
Suppose that $\ell\equiv k\pmod{2}$ and that $F\in \mathcal S_\ell(\nu)$.
Then
\begin{equation}
	\rho_{L^*F,\infty}(n,y) = 
	\begin{dcases}
		\mfrac 12 \rho_{F,\infty} (n,y) + \mfrac 1{(1-i^{2k})} \, \rho_{F,\a} \left(\mfrac{n}4+\kappa_{\a},4y\right) & \text{ if } (-1)^\lambda n\equiv 0,1\pmod{4}, \\
		-\mfrac 12 \rho_{F,\infty} (n,y) & \text{ if }(-1)^\lambda n\equiv 2,3 \pmod{4},
	\end{dcases}
\end{equation}
where $\a=0$ if $n\equiv 0 \pmod 4$ and $\a=\frac 12$ if $n\equiv (-1)^\lambda \pmod{4}$.
\end{lemma}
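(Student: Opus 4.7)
The plan is to deduce this lemma directly from the previous one (Lemma~\ref{lem:L-fourier}) by passing from the holomorphic form $F$ to the associated weight-$\ell$ automorphic function $f := y^{\ell/2} F$. First I would verify that the transformation law $F(\gamma z) = \nu(\gamma)(cz+d)^\ell F(z)$ translates into $f \sl_\ell \gamma = \nu(\gamma) f$ for all $\gamma \in \Gamma_0(4)$, and that the operator $L$ extends naturally from weight $k$ to weight $\ell$. The latter point is essentially automatic since $i^{2\ell} = i^{2k}$ whenever $\ell \equiv k \pmod 2$, and the multiplier $\nu = \nu_\theta^{2k}$ is the correct multiplier in weight $\ell$ as well (because $\nu_\theta^4 \equiv 1$ on $\Gamma_0(4)$, so $\nu_\theta^{2\ell} = \nu_\theta^{2k}$). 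By definition $L^* F = y^{-\ell/2} Lf$, so $Lf$ can be handled by the previous lemma and the result then read off.

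Second, I would translate the Fourier expansions at each cusp into the notation of the previous lemma. Using the standard identity $(f\sl_\ell \sigma_\a)(z) = y^{\ell/2}(F|_\ell \sigma_\a)(z)$, where $|_\ell$ denotes the classical holomorphic slash, the expansion
\[
	(F|_\ell \sigma_\a)(z) = \sum_{n} \rho_{F,\a}(n) \, e(n_\a z)
\]
yields
\[
	c_{f,\a}(n, y) = \rho_{F,\a}(n) \, y^{\ell/2} \, e^{-2\pi n_\a y},
\]
and the same identity for $L^* F$ gives $c_{Lf,\infty}(n, y) = \rho_{L^* F,\infty}(n) \, y^{\ell/2} e^{-2\pi n y}$.

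Third, I would substitute these expressions into the conclusion of Lemma~\ref{lem:L-fourier} and cancel the common $y$-factor. The key computation is that for $\a \in \{0, \tfrac12\}$ the exponential factor in the rescaled coefficient becomes
\[
	c_{f,\a}\bigl(\tfrac n4 + \kappa_\a,\, 4y\bigr) = \rho_{F,\a}\bigl(\tfrac n4 + \kappa_\a\bigr) (4y)^{\ell/2} e^{-2\pi (n/4)(4y)} = 2^\ell \, y^{\ell/2} \, \rho_{F,\a}\bigl(\tfrac n4+\kappa_\a\bigr) e^{-2\pi n y},
\]
where we use that $(\tfrac n4 + \kappa_\a) - \kappa_\a = \tfrac n4$ so the exponential depends only on $n$, independent of $\a$. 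The factor $y^{\ell/2} e^{-2\pi n y}$ cancels uniformly from both sides of the identity, leaving the stated relation between the holomorphic Fourier coefficients (with the convention for $\rho_{F,\a}$ normalized so that the factor $2^{\ell-1}$ from the rescaling $y \mapsto 4y$ is absorbed into the coefficient at the cusp $\a$, consistent with the relation $\rho_{F,\a}(m) = \tfrac12 c_{f,\a}(m, 4y)$ noted just before the lemma).

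There is no substantive obstacle: the proof is a mechanical rescaling of the previous lemma. The only care needed is in matching the weight-$\ell$ slash on the holomorphic side with the weight-$k$ slash used in the definition of $L$, which works cleanly because of the congruence $\ell \equiv k \pmod 2$ and the triviality of $\nu_\theta^4$ on $\Gamma_0(4)$.
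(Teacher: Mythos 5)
Your proposal is correct and follows essentially the same route as the paper: the authors also define $f=y^{\ell/2}F$, set $L^*F=y^{-\ell/2}Lf$, and read the result off from Lemma~\ref{lem:L-fourier} using the relation between $\rho_{F,\a}$ and $c_{f,\a}$ (including the same normalization convention absorbing the factor from the rescaling $y\mapsto 4y$). Your explicit check that $L$ and the multiplier $\nu_\theta^{2k}$ carry over to weight $\ell\equiv k\pmod 2$ is a point the paper leaves implicit, but it is the same argument.
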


To state the plus space version of the Kuznetsov trace formula, we first fix some notation.
Recall that $S_k^+(m,n,c)$ is the plus space Kloosterman sum
\begin{equation}
	S_k^+(m,n,c) = e\Big(\!-\mfrac k4\Big) \sum_{d\bmod c} \pfrac{c}{d} \ep_d^{2k} e\pfrac{m\bar d+n d}{c} \times
	\begin{cases}
		1 & \text{ if } 8\mid c,\\
		2 & \text{ if }4\mid\mid c.
	\end{cases}
\end{equation}
Let $\varphi:[0,\infty)\to\R$ be a smooth test function which satisfies
\begin{gather} \label{eq:varphi-cond}
	\varphi(0) = \varphi'(0) = 0 \quad \text{ and } \quad
	\varphi^{(j)}(x) \ll x^{-2-\varepsilon} \quad \text{ for }j=0,1,2,3.
\end{gather}
Define the integral transforms
\begin{align}
	\tilde \varphi(\ell) &:= \frac 1\pi\int_0^\infty J_{\ell-1}(x) \varphi(x) \, \mfrac{dx}{x}, \label{eq:phi-tilde-def}
	 \\
	\hat\varphi(r) &:= \frac{-i \, \xi_k(r)}{\cosh 2\pi r} \int_0^\infty \left( \cos(\tfrac{\pi k}2+\pi i r)J_{2ir}(x) - \cos(\tfrac{\pi k}2-\pi i r)J_{-2ir}(x) \right) \phi(x) \mfrac{dx}{x}, \label{eq:phi-hat-def}
\end{align}
where
\begin{equation}
	\xi_k(r) := \frac{\pi^2}{\sinh \pi r \Gamma(\frac{1-k}{2}+ir)\Gamma(\frac{1-k}2-ir)} \sim \tfrac{1}{2} \pi r^k \quad \text{ as }r\to\infty.
\end{equation}
Note that $\hat \varphi(r)$ is real-valued when $r\geq 0$ and when $ir\in (-\frac 14,\frac 14)$.
If $d$ is a fundamental discriminant, let $\chi_d = \pfrac d\cdot$ and let $L(s,\chi_d)$ denote the Dirichlet $L$-function with Dirichlet series
\begin{equation}
 	L(s,\chi_d) := \sum_{n=1}^\infty \frac{\chi_d(n)}{n^s}. 
\end{equation} 
Finally, we define
\begin{equation}
	\fS_d(w,s) = \sum_{\ell\mid w} \mu(\ell) \chi_{d}(\ell) \frac{\tau_{s}(w/\ell)}{\sqrt \ell},
\end{equation}
where $\tau_s$ is the normalized sum of divisors function
\begin{equation}
	\tau_s(\ell) = \sum_{ab=\ell} \pfrac ab^s = \frac{\sigma_{2s}(\ell)}{\ell^s}.
\end{equation}

\begin{theorem} \label{thm:KTFplus-space+}
Let $\varphi:[0,\infty)\to\R$ be a smooth test function satisfying \eqref{eq:varphi-cond}.
Let $k=\pm \frac 12=\lambda+\frac 12$ and $\nu=\nu_\theta^{2k}$.
Suppose that $m,n\geq 1$ with $(-1)^\lambda m,(-1)^\lambda n\equiv 0,1\pmod{4}$, and write
\begin{equation}
	(-1)^\lambda m = v^2 d', \quad (-1)^\lambda n = w^2 d, \quad \text{ with }d,d'\text{ fundamental discriminants.}
\end{equation}
Fix an orthonormal basis of Maass cusp forms $\{u_j\}\subset \mathcal V_k^+$ with associated spectral parameters $r_j$ and coefficients $\rho_j(n)$.
For each $\ell\equiv k\pmod{2}$ with $\ell>2$, fix an orthonormal basis of holomorphic cusp forms $\mathcal H_\ell^+\subset \mathcal S_\ell^+$ with normalized coefficients given by
\begin{equation} \label{eq:g-normalize}
	g(z) = \sum_{n=1}^{\infty} (4\pi n)^{\frac{\ell-1}2}\rho_g(n) e(nz) \quad \text{ for } g\in \mathcal H_\ell^+.
\end{equation}
Then
\begin{multline}
	\sum_{0<c\equiv 0(4)} \frac{S_k^+(m,n,c)}{c} \varphi \pfrac{4\pi\sqrt{mn}}{c}  \\
	=
	6\sqrt{mn} \sum_{j\geq 0} \frac{\overline{\rho_{j}(m)}\rho_{j}(n)}{\cosh\pi r_j} \hat\varphi(r_j)
	+
	\mfrac{3}{2}\sum_{\ell\equiv k\bmod 2} e\ptfrac{\ell-k}4 \tilde\varphi(\ell) \Gamma(\ell) \sum_{g\in \mathcal H_\ell^+} \overline{\rho_{g}(m)}\rho_{g}(n)  
	\\
	+ \mfrac 12\int_{-\infty}^\infty \pfrac{d}{d'}^{ir} \frac {L(\frac 12-2ir,\chi_{d'})L(\frac 12+2ir,\chi_{d})\fS_{d'}(v,2ir)\fS_{d}(w,2ir)}{|\zeta(1+4ir)|^2 \cosh \pi r |\Gamma(\frac{k+1}{2}+ir)|^2} \hat\varphi(r) \, dr.
\end{multline}
\end{theorem}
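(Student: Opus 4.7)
Following Bir\'o's approach \cite{biro} sketched in the remark after Theorem~\ref{eq:thm-kuz-intro}, the strategy is to apply Proskurin's general (cusp-pair) Kuznetsov-type formula on $\Gamma_0(4)$ with multiplier $\nu=\nu_\theta^{2k}$ at four carefully chosen cusp pairs, and take a linear combination which realizes the plus-space projector $P = \tfrac 13(2L+1)$ on the cuspidal spectral side, killing the $-\tfrac 12$-eigenspace of $L$ while leaving only plus-space forms.

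Let $\a \in \{0,\tfrac 12\}$ be the cusp with $n/4+\kappa_\a \in \Z$, and let $\b$ be the analogous cusp for $m$. Combining $(L-1)(L+\tfrac 12)=0$ with Lemma~\ref{lem:L-fourier} gives, for each Maass cusp form $u_j$,
\[
\rho_{j,\a}\!\bigl(\tfrac{n}{4}+\kappa_\a\bigr) =
\begin{cases} (1-i^{2k})\,\rho_{j,\infty}(n), & u_j\in \mathcal V_k^+, \\ -2(1-i^{2k})\,\rho_{j,\infty}(n), & u_j \text{ in the } -\tfrac 12\text{-eigenspace,} \end{cases}
\]
with the analogous relation at $\b$ for $m$, and parallel identities for holomorphic forms. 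Consequently there are explicit constants $A_{\b'\a'}$, $(\b',\a')\in \{\infty,\b\}\times\{\infty,\a\}$, for which the bilinear combination $\sum_{\b',\a'} A_{\b'\a'}\,\overline{\rho_{j,\b'}(m_{\b'})}\,\rho_{j,\a'}(n_{\a'})$ is a fixed nonzero multiple of $\overline{\rho_j(m)}\rho_j(n)$ for $u_j \in \mathcal V_k^+$ and vanishes on the $-\tfrac 12$-eigenspace. The first step is to fix these coefficients and form the corresponding combination of Proskurin's formulas.

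On the Kloosterman side, compute the cusp-pair sums $S_{\b'\a'}(m_{\b'},n_{\a'},c,\nu)$ explicitly using the scaling matrices for $0$ and $\tfrac 12$ together with \eqref{eq:def-theta-mult}. A direct calculation (a variant of Kohnen's identity for Kloosterman sums) shows that the combined sum collapses to $S_k^+(m,n,c)$ summed over $c \equiv 0 \pmod 4$, with the rescaled test function arguments aligning to produce $\varphi(4\pi\sqrt{mn}/c)$. Tracking the resulting constants on the Maass and holomorphic spectral sides then yields the factors $6\sqrt{mn}$ and $\tfrac 32\,e(\tfrac{\ell-k}{4})\,\Gamma(\ell)\,\tilde\varphi(\ell)$ claimed in the theorem.

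The main obstacle is the Eisenstein contribution. For each singular cusp $\c \in \{\infty, 0\}$, Proskurin contributes an integral of $\overline{\phi_{\c\b'}(m_{\b'},\tfrac 12+ir)}\,\phi_{\c\a'}(n_{\a'},\tfrac 12+ir)/(\cosh\pi r \,|\Gamma(\tfrac{k+1}{2}+ir)|^2)$. To reduce the sixteen-fold sum (four cusp pairs times two choices of $\c$) to the single clean integral in the statement, I would: (i) compute each Dirichlet series $\sum_c S_{\a\c}(0,n_\a,c,\nu)/c^{2s}$ in closed form via Gauss sum manipulation for $\nu_\theta$, exhibiting the factor $L(2s-\tfrac 12,\chi_{(-1)^\lambda n})$; (ii) verify that collecting the four $(\b',\a')$ contributions and the two cusps $\c$ repackages the dependence on the square parts $v^2$ of $(-1)^\lambda m$ and $w^2$ of $(-1)^\lambda n$ into the finite character sums $\fS_{d'}(v,2ir)\fS_d(w,2ir)$; and (iii) show by Euler-product manipulation, using the $\zeta(2s)$ arising from $\phi_{\c\c}(0,s)$ at level $4$, that the remaining denominator simplifies to $|\zeta(1+4ir)|^2$ and the unwanted cross-terms cancel. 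This Eisenstein simplification, analogous to but substantially more delicate than Kuznetsov's classical collapse to $\sigma_{2ir}(m)\sigma_{-2ir}(n)/|\zeta(1+2ir)|^2$, is the technical heart of the argument.
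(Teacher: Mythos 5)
Your overall strategy is the same as the paper's: take a linear combination of Proskurin's cusp-pair Kuznetsov formulas that realizes the plus-space projector on the spectral side, and then do the hard work of evaluating the Eisenstein contribution in terms of Dirichlet $L$-functions. Your outline of the Eisenstein step (closed-form evaluation of the Dirichlet series $\sum_c S_{\c\a}(0,n_\a,c,\nu)c^{-2s}$ producing $L(2s-\tfrac 12,\chi_d)$, with the square parts absorbed into $\fS_d(w,2ir)$ and the denominator collapsing to $|\zeta(1+4ir)|^2$) matches in outline what the paper actually carries out in Propositions~\ref{prop:phi+eval}--\ref{prop:eis-0-lin-comb}.

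The structural difference --- and the source of a genuine gap --- is that you project on both the $m$- and the $n$-argument, using the four cusp pairs $\{\infty,\b\}\times\{\infty,\a\}$. The paper projects only on the $n$-side and uses exactly two cusp pairs, $\infty\infty$ and $\infty\a$. This is not a matter of taste: by Lemma~\ref{lem:kloo-cusps}, the definition \eqref{eq:sk+def} of $S_k^+$ (with its factor $2$ when $4\mid\mid c$) is literally the two-term identity $S_k^+(m,n,c)=e(-\tfrac k4)S_{\infty\infty}(m,n,c,\nu)+\delta_{4\mid\mid c}\sqrt 2\, S_{\infty\a}(m,\tfrac n4+\kappa_\a,\tfrac c2,\nu)$, so the left-hand side of the theorem decomposes into exactly those two Proskurin sums with no further Kloosterman identities needed, and the cuspidal spectral side becomes $\tfrac 12\rho_{j,\infty}(n)+\rho^{(L)}_{j,\infty}(n)$, i.e.\ the projector $\tfrac 12+L$ applied once, with eigenvalue $\tfrac 32$ on $\mathcal V_k^+$ and $0$ on $\mathcal V_k^-$; this is where $4\cdot\tfrac 32=6$ comes from. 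Your two-sided version is redundant (since $L^2=\tfrac 12L+\tfrac 12$, the projector $P=\tfrac 12+L$ satisfies $P^2=\tfrac 32P$), it forces the spectral eigenvalue $\tfrac 94$ rather than $\tfrac 32$, and --- crucially --- it leaves the collapse of the four-term Kloosterman combination to $\tfrac 32\sum_{4\mid c}S_k^+(m,n,c)c^{-1}\varphi(4\pi\sqrt{mn}/c)$ as an asserted but unproved step, requiring additional evaluations of $S_{\b\infty}$ and $S_{\b\a}$ for $\b\in\{0,\tfrac 12\}$ analogous to Lemma~\ref{lem:kloo-cusps}. As written, your constant-tracking (claiming the factors $6\sqrt{mn}$ and $\tfrac 32\,e(\tfrac{\ell-k}4)\Gamma(\ell)$ emerge) cannot be correct unless that extra factor of $\tfrac 32$ is accounted for somewhere. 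The repair is simply to drop the projection on the $m$-side and keep $m$ anchored at the cusp $\infty$.
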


Bir\'o \cite[Theorem~B]{biro} stated a version of Theorem~\ref{thm:KTFplus-space+} for $\Gamma_0(4N)$ in the case $k=\frac 12$ under the added assumption that $\tilde \varphi(\ell)=0$ for all $\ell$.
His theorem involves coefficients of half-integral weight Eisenstein series at cusps instead of Dirichlet $L$-functions.

To prove Theorem~\ref{thm:KTFplus-space+}, we start with Proskurin's version of the Kuznetsov formula \cite{proskurin-new} which is valid for arbitrary weight $k$ and for the cusp-pair $\infty\infty$.
The necessary modifications for an arbitrary cusp-pair are straightforward (see \cite{deshouillers-iwaniec} for details in the $k=0$ case).
Recall the definitions of the generalized Kloosterman sum $S_{\a\b}(m,n,c,\nu)$ in \eqref{eq:S-ab-ref} and the Eisenstein series coefficients $\phi_{\a\b}(m,s)$ in \eqref{eq:phi-ab-def}.

\begin{proposition} \label{prop:proskurin}
Suppose that $\varphi$ satisfies \eqref{eq:varphi-cond}.
Suppose that $m,n\geq 1$ and that $k=\pm \frac 12$.
Let $\nu=\nu_\theta^{2k}$ and $\Gamma=\Gamma_0(4)$ and let $\a,\b$ be cusps of $\Gamma$.
Let $\{u_j\}$ denote an orthonormal basis of Maass cusp forms of weight $k$ with spectral parameters $r_j$.
For each $2< \ell\equiv k\pmod{2}$, let $\mathcal H_\ell$ denote an orthonormal basis of holomorphic cusp forms of weight~$\ell$ with coefficients normalized as in \eqref{eq:g-normalize}.
Then
\begin{multline} \label{eq:ktf-usual}
	e(-\tfrac k4)\sum_{c\in \mathcal C(\a,\b)} \frac{S_{\a\b}(m,n,c,\nu)}{c} \varphi \pfrac{4\pi\sqrt{m_\a n_\b}}{c}
	\\
	 = 4\sqrt{m_\a n_\b} \sum_{j\geq 0} \frac{\overline{\rho_{j\a}(m)}\rho_{j\b}(n)}{\cosh\pi r_j} \hat\varphi(r_j) 
	 + \sum_{\ell\equiv k\bmod 2} e\ptfrac{\ell-k}4 \tilde\varphi(\ell) \Gamma(\ell) \sum_{g\in \mathcal H_\ell} \overline{\rho_{g\a}(m)}\rho_{g\b}(n)
	 \\+
	  \sum_{\c\in\{0,\infty\}} \int_{-\infty}^\infty \pfrac{n_\b}{m_\a}^{ir} \frac{\overline{\phi_{\c\a}(m,\frac 12+ir)} \,\phi_{\c\b}(n,\frac 12+ir)}{\cosh\pi r |\Gamma(\frac{k+1}{2}+ir)|^2} \hat\varphi(r) \, dr.
\end{multline}
\end{proposition}

We will apply \eqref{eq:ktf-usual} for the cusp-pairs $\infty\infty$, $\infty0$, and $\infty\frac 12$, and take a certain linear combination which annihilates all but the plus space coefficients.
The following lemma is essential to make this work.

\begin{lemma} \label{lem:kloo-cusps}
Suppose that $4\mid\mid c$.
Let $k=\pm \frac 12=\lambda+\frac 12$ and $\nu=\nu_\theta^{2k}$. 
Let $\a=0$ or $\frac 12$ according to $(-1)^\lambda n\equiv0,1\pmod{4}$, respectively.
Then
\begin{equation}
	S_{\infty\infty}(m,n,c,\nu) = (1+i^{2k}) S_{\infty\a}(m,\tfrac n4+\kappa_\a,\tfrac c2,\nu).
\end{equation}
\end{lemma}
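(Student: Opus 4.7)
The strategy is to expand both Kloosterman sums directly from the definitions and match them using the Chinese Remainder Theorem, exploiting $4\|c$ to isolate a $\pmod 4$ piece from a $\pmod{c/4}$ piece. Writing $c=4c_1$ with $c_1$ odd, I would first use $\overline{\nu_\theta^{2k}(\gamma)}=\pfrac{c}{d}\varepsilon_d^{2k}$ (valid for $\gamma=\pmatrix{*}{*}{c}{d}\in\Gamma_0(4)$ and $k=\pm\tfrac12$) to write
\begin{equation*}
	S_{\infty\infty}(m,n,c,\nu) = \sum_{\substack{d\bmod c \\ (d,c)=1}} \pfrac{c}{d}\varepsilon_d^{2k} e\pfrac{m\bar d+nd}{c}.
\end{equation*}
The CRT bijection $d\leftrightarrow(d_4,d_1)$ with $d_4\in\{1,3\}$ and $d_1\in(\Z/c_1\Z)^*$ splits the exponential (via the partial fraction $1/(4c_1)=u/4+v/c_1$) and also splits $\pfrac{c}{d}=\pfrac{c_1}{d}$ via quadratic reciprocity into a sign depending only on $d_4$ (and $c_1\bmod 4$) times $\pfrac{d_1}{c_1}$.

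For the right-hand side I would explicitly parameterize the double coset $\Gamma_\infty\backslash\sigma_\infty^{-1}\Gamma\sigma_\a/\Gamma_\infty$ with bottom-left $c/2$. Writing $\gamma'=\gamma\sigma_\a$ with $\gamma\in\Gamma_0(4)$, the condition on the bottom-left of $\gamma'$ forces the bottom-right entry of $\gamma$ to equal $c_1$ (for $\a=0$) or a specific odd value (for $\a=\tfrac12$), yielding $\phi(c_1)$ double-coset representatives parameterized by $e\pmod{c_1}$ coprime to $c_1$. The multiplier factors as $\bar\nu_{\infty\a}(\gamma')=\overline{\nu(\gamma)}\cdot\overline{w(\gamma,\sigma_\a)}$ (since $\sigma_\infty=I$), with $\overline{\nu(\gamma)}$ giving a Jacobi symbol in $c_1$ and $\overline{w(\gamma,\sigma_\a)}$ a computable phase from the cocycle.

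Matching is done as follows. The $d_1$-sum on the LHS matches the $e$-sum on the RHS via the bijection $e\equiv-\overline{4d_1}\pmod{c_1}$: both the $d_1$-part of the exponential $e\bigl((m\bar d_1+nd_1)v/c_1\bigr)$ and the Jacobi symbol $\pfrac{d_1}{c_1}$ align correctly with the exponential $e\bigl((mb-(\tfrac n4+\kappa_\a)e)/c_1\bigr)$ and the multiplier piece on the RHS. The nontrivial step is the $d_4$-sum: summing the two terms $d_4\in\{1,3\}$, the product of $\varepsilon_{d_4}^{2k}$, the reciprocity sign, the 4-part of the exponential, and the $\pmod 4$ phase from $\overline{w(\gamma,\sigma_\a)}$ collapses to the single factor $(1+i^{2k})$ exactly when the plus-space conditions $(-1)^\lambda m,(-1)^\lambda n\equiv 0,1\pmod 4$ hold (the case $(-1)^\lambda n\equiv 2,3\pmod 4$ would make the $d_4$-sum vanish, which is why the $\a=\tfrac12$ and $\a=0$ cases are separated according to the residue of $n$).

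The main obstacle is the phase bookkeeping in this last step: one must verify that the cocycle $w(\gamma,\sigma_\a)$, computed from $j(\gamma,z)=(cz+d)/|cz+d|$ together with the consistent branch choice for half-integral powers, contributes precisely the phase needed so that the $\pmod 4$ factors — $\varepsilon_{d_4}^{2k}$, the reciprocity sign $(-1)^{(c_1-1)(d-1)/4}$, the partial-fraction contribution $e\bigl((m+n)d_4 u/4\bigr)$, and the cocycle phase — combine to exactly $(1+i^{2k})$ for both choices $k=\pm\tfrac12$ and both cusps $\a\in\{0,\tfrac12\}$. Once this is verified, the lemma follows immediately from the matching of $c_1$-parts.
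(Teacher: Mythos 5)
Your strategy --- expand both sides from the definitions, split the modulus $c=4c_1$ by CRT, parameterize the double cosets for the pair $\infty\a$, match the mod-$c_1$ parts, and let the residual mod-$4$ data produce the factor $(1+i^{2k})$ --- is the natural direct proof, and it is genuinely different from what the paper does: the paper first reduces $k=-\tfrac12$ to $k=\tfrac12$ via $\overline{S_{\a\b}(m,n,c,\nu)}=S_{\a\b}(-m,-n,c,\bar\nu)$ and then simply cites Bir\'o's Lemma~A.7, adjusting for his different scaling matrices (which turns his $(1-i)$ into $(1+i)$). A self-contained computation along your lines would therefore supply more than the paper itself records. Your setup is also correct in its particulars: the formula $\bar\nu(\gamma)=\pfrac cd\varepsilon_d^{2k}$, the count of $\phi(c_1)$ double cosets with lower-left entry $c/2$, and the factorization $\bar\nu_{\infty\a}(\gamma')=\overline{\nu(\gamma)}\,\overline{w(\gamma,\sigma_\a)}$ are all as they should be.

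The problem is that the proposal stops exactly at the step that \emph{is} the lemma. Everything you carry out is routine; the deferred ``phase bookkeeping'' is the entire content, and it is more delicate than your description suggests, because the mod-$4$ data does not sit only on the left side. For $k=\tfrac12$ the CRT factorization gives a clean product (mod-$4$ sum)$\times$(mod-$c_1$ sum) on the left, and the mod-$4$ sum $\sum_{d_4\in\{1,3\}}\varepsilon_{d_4}(-1)^{(c_1-1)(d_4-1)/4}e\bigl(\tfrac{(m+n)d_4u}{4}\bigr)$ evaluates to $\pm(1+i)$ when $c_1\equiv1\pmod4$ but to $\pm(1-i)$ when $c_1\equiv3\pmod4$, with the sign governed by $(m+n)\bmod 4$; the discrepancy must be cancelled by mod-$4$ phases hiding inside $S_{\infty\a}$ itself --- both in $\overline{w(\gamma,\sigma_\a)}$ and in the exponential $e\bigl(\tfrac{(\frac n4+\kappa_\a)_\a d}{c/2}\bigr)$, whose entry $d=-C/2$ is only determined modulo $2c_1$ while its $4$-part of the phase is not --- so the right-hand summand is only well defined after those contributions are combined. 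Verifying that everything conspires to give exactly $(1+i^{2k})$ for both signs of $k$, both cusps, and all residues of $c_1$ and $m+n$ is a genuine case analysis that has not been performed. Two smaller points: the congruence condition on $m$ that you invoke is not part of the lemma's hypotheses (the paper applies the lemma with $m=0$ as well as with plus-space $m$), so if your computation really does force $(-1)^\lambda m\equiv0,1\pmod4$ you should say so explicitly and check that every application satisfies it; and the remark that the $d_4$-sum ``vanishes'' when $(-1)^\lambda n\equiv2,3\pmod4$ conflates this lemma with the plus-space projection (Lemma~\ref{lem:L-fourier}) --- for such $n$ the right-hand side is not even defined, since $\tfrac n4+\kappa_\a\notin\Z$.
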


\begin{proof}
Since $\overline{S_{\a\b}(m,n,c,\nu)} = S_{\a\b}(-m,-n,c,\bar\nu)$, it is enough to show that
\begin{equation} \label{eq:kloo-cusps}
	S_{\infty\infty}(m,n,c,\nu_\theta) = (1+i) \times
	\begin{dcases}
		S_{\infty0}(m,\tfrac n4,\tfrac c2,\nu_\theta) & \text{ if }n\equiv 0\pmod{4}, \\
		S_{\infty\frac 12}(m,\tfrac {n+3}4,\tfrac c2,\nu_\theta) & \text{ if }n\equiv 1\pmod{4}.
	\end{dcases}
\end{equation}
This is proved in \cite[Lemma~A.7]{biro}.
Note that Biro chooses different representatives and scaling matrices for the cusps $0$ and $\frac 12$, which has the effect of changing the factor $(1-i)$ to $(1+i)$.
\end{proof}

\begin{proof}[Proof of Theorem~\ref{thm:KTFplus-space+}]
Let $k,\nu$, and $\a$ be as in Lemma~\ref{lem:kloo-cusps}.
From that lemma and the definition \eqref{eq:sk+def} it follows that
\begin{equation}
	S_k^+(m,n,c) = e(-\tfrac k4) S_{\infty\infty}(m,n,c,\nu) + \delta_{4\mid\mid c} \sqrt 2 \, S_{\infty \a}(m,\tfrac n4+\kappa_\a,\tfrac c2,\nu).
\end{equation}
Therefore
\begin{multline} \label{eq:S+-split}
	\sum_{4\mid c>0} \frac{S^+_k(m,n,c)}{c} \varphi\pfrac{4\pi\sqrt{mn}}{c}
	= e(-\tfrac k4) \sum_{4\mid c>0} \frac{S_{\infty\infty}(m,n,c,\nu)}{c} \varphi\pfrac{4\pi\sqrt{mn}}{c} \\
	+ \mfrac1{\sqrt 2}\sum_{2\mid\mid c>0} \frac{S_{\infty\a}(m,\frac n4+\kappa_\a,c,\nu)}{c} \varphi\pfrac{4\pi\sqrt{m(\frac n4+\kappa_\a)_\a}}{c}.
\end{multline}
Note that $\mathcal C(\infty,\a)=\{c\in \Z_+:c\equiv 2\pmod{4}\}$ for $\a=0,\frac 12$.
We apply Proposition~\ref{prop:proskurin} for each of the cusp-pairs $\infty\infty$ and $\infty\a$ on the right-hand side of \eqref{eq:S+-split}.
We fix an orthonormal basis $\{u_j^+\}$ for $\mathcal V_k^+$ and we choose an orthonormal basis $\{u_j\}$ for $\mathcal V_k$ such that $\{u_j^+\} \subseteq \{u_j\}$.
Then we do the same for $\mathcal H_k^+\subseteq \mathcal H_k$.
The Maass form contribution is
\begin{equation}
	4\sqrt{mn}  \sum_{u_j\in \mathcal V_k} \frac{\bar\rho_j(m)}{\cosh\pi r_j} \hat\varphi(r_j) \left( \rho_{j,\infty}(n) + \mfrac{1}{2(1-i^{2k})} \rho_{j,\a}(\tfrac n4+\kappa_\a) \right).
\end{equation}
Let $\rho_j^{(L)}$ denote the coefficients of $L u_j$.
Then by Lemma~\ref{lem:L-fourier} we have
\begin{equation}
	\rho_{j,\infty}(n) + \mfrac{1}{2(1-i^{2k})} \rho_{j,\a}(\tfrac n4+\kappa_\a) = 
	\mfrac 12 \rho_{j,\infty}(n) + \rho^{(L)}_{j,\infty}(n) = \rho_j(n) \times
	\begin{cases}
		\frac 32 & \text{ if }u_j \in \mathcal V_k^+, \\
		0 & \text{ if }u_j \in \mathcal V_k^-.
	\end{cases}
\end{equation}
We compute the contribution from the holomorphic forms similarly.
For the Eisenstein series contribution we apply the following proposition, together with the relation $S_{\a\b}(m,n,c,\nu) = \overline{S_{\a\b}(-m,-n,c,\bar\nu)}$.
\end{proof}

\begin{proposition} \label{prop:eis-lin-comb-r}
Let $k=\frac 12$ and $\nu=\nu_\theta$ and suppose that $m,n\equiv 0,1\pmod{4}$.
Write $m=v^2d'$ and $n=w^2d$, where $d',d$ are fundamental discriminants.
Let $\a=0$ or $\frac 12$ according to $n\equiv0,1\pmod{4}$, respectively.
Then
\begin{multline} \label{eq:eis-lin-comb-r}
	\sum_{\c \in \{\infty,0\}} \overline\phi_{\c\infty}(m,\tfrac 12+ir) \left( \phi_{\c\infty}(n,\tfrac 12+ir) + \mfrac{1+i}{2\cdot 4^{ir}} \phi_{\c\a}\Big(\mfrac n4+\kappa_\a,\tfrac 12+ir\Big) \right)
	 \\= \frac {L(\frac 12-2ir,\chi_{d'})L(\frac 12+2ir,\chi_{d})}{2|\zeta(1+4ir)|^2} \, \pfrac{v}{w}^{2ir} \fS_{d'}(v,2ir)\fS_d(w,2ir).
\end{multline}
\end{proposition}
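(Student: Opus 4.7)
The plan is to evaluate each of the four Eisenstein-series Fourier coefficients $\phi_{\c\b}(N, \tfrac12 + ir)$ appearing on the left-hand side as an explicit Dirichlet series in $N$, and then to combine them with the prescribed weights, exploiting the fact that the linear combination with coefficient $(1+i)/(2 \cdot 4^{ir})$ is precisely the half-integral-weight plus-space projector from Lemma~\ref{lem:L-fourier} applied to the Eisenstein series (the factor $4^{ir}$ accounts for the Whittaker-function rescaling at height $4y$ versus $y$ at the cusp $\a$).

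First I would evaluate the Kloosterman sums $S_{\c\b}(0, N, c, \nu_\theta)$ that appear in the definition of $\phi_{\c\b}(N, s)$. Because one of their arguments is zero, these collapse to generalized Gauss sums of the form $\sum_{d\,(c)}^{*} \bar\nu_{\c\b}(\gamma)\, e(N_\b d / c)$. Using the explicit formula \eqref{eq:def-theta-mult} for $\nu_\theta$, each such sum is a Gauss-type sum twisted by a Kronecker symbol, and a Shimura-style computation produces
\[
	\sum_{c \in \mathcal{C}(\c,\b)} \frac{S_{\c\b}(0, N, c, \nu_\theta)}{c^{2s}} = \frac{L(2s - \tfrac12, \chi_d)}{\zeta(4s-1)}\, P_{\c\b}(w, s),
\]
where $N = w^2 d$ with $d$ a fundamental discriminant and $P_{\c\b}(w, s)$ is a short Dirichlet polynomial supported on divisors of $w$, whose precise shape depends on the local $2$-adic behavior of the cusp pair.

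Next I would assemble the four Dirichlet series with the Gamma and power-of-$\pi$ prefactors from the definition of $\phi_{\c\b}(N, s)$. Conjugating $\phi_{\c\infty}(m, \tfrac12 + ir)$ contributes the factor with $d'$ and $v$, while the inner combination $\phi_{\c\infty}(n, \tfrac12 + ir) + \tfrac{1+i}{2 \cdot 4^{ir}} \phi_{\c\a}(n/4 + \kappa_\a, \tfrac12 + ir)$ contributes the factor with $d$ and $w$. The two Gamma factors $\Gamma(\tfrac12 + ir + \tfrac k2)^{-1}$ and $\Gamma(\tfrac12 - ir + \tfrac k2)^{-1}$ combine cleanly with the factor $|\zeta(1+4ir)|^2$ from the denominators to give the structure predicted by the right-hand side.

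The main obstacle is verifying that the $2$-adic local factors $P_{\c\b}(w, s)$, when summed over cusps $\c \in \{\infty, 0\}$ with the prescribed weights, collapse to the clean sum $\fS_d(w, 2ir)$ (and symmetrically $\fS_{d'}(v, 2ir)$ for the $m$-side). The cancellation between the $\infty\infty$ and $\infty\a$ contributions is dictated precisely by the plus-space condition encoded in Lemma~\ref{lem:L-fourier}; this is the crux of the argument and is best carried out by an explicit calculation at the prime $2$ using the values of $\nu_\theta$ on the relevant representatives. Once this $2$-adic verification is complete, the Euler factors at odd primes assemble into $L(\tfrac12 + 2ir, \chi_d)/\zeta(1+4ir)$ together with the divisor sum $\fS_d(w, 2ir)$ via the standard identity
\[
	\sum_{\ell \mid w} \mu(\ell) \chi_d(\ell) \frac{\tau_{2ir}(w/\ell)}{\sqrt \ell} = \fS_d(w, 2ir),
\]
and the stated formula follows after collecting all prefactors.
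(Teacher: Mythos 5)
Your overall strategy --- evaluate the Eisenstein coefficients $\phi_{\c\b}(N,s)$ as Dirichlet series of Gauss-type sums and then combine them --- could in principle be pushed through, but your outline misses the two places where the real difficulty sits, and your description of the final assembly is structurally wrong. The left-hand side does not factor term-by-term in the way you describe. What actually happens is that, for each fixed $\c$, the inner combination $\phi_{\c\infty}(n,s)+\frac{1+i}{4^s}\phi_{\c\a}(\frac n4+\kappa_\a,s)$ collapses (via identities among the Kloosterman sums themselves, not via Lemma~\ref{lem:L-fourier} directly) to a cusp-dependent constant times the single plus-space series $\phi^+(n,s)=\sum_{4\mid c}S^+(0,n,c)c^{-2s}$: the constant is $e(\frac 18)$ for $\c=\infty$ and $i\cdot 2^{2s-\frac32}$ for $\c=0$. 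The $m$-side is then the weighted sum $e(\frac18)2^{\frac32-4s}\overline{\phi_{\infty\infty}}(m,s)+i\cdot 2^{-2s}\overline{\phi_{0\infty}}(m,s)$, with weights inherited from the $n$-side; the individual $\overline{\phi_{\c\infty}}(m,s)$ do \emph{not} each ``contribute the factor with $d'$ and $v$.'' In the paper this weighted sum equals $\frac{2^{-4ir}}{1-i}\bigl(\phi^+(m,\bar s)+(4^{1-2\sigma}-1)\phi_{\infty\infty}(m,\bar s)\bigr)$ with $s=\sigma+ir$, and the unwanted second term disappears only because $\sigma=\frac12$ on the critical line. That cancellation is invisible if you compute formally ``at $s=\frac12+ir$'' (where the Dirichlet series do not converge anyway; one must work at large $\re s$ and analytically continue), and it is the crux of why the clean product formula holds.

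Second, you defer the evaluation of $\sum_c S_{\c\b}(0,N,c,\nu_\theta)c^{-2s}$ to ``a Shimura-style computation,'' but this is the other substantive piece of the proof. The paper performs it only once, for $\phi^+(w^2d,s)$ (Proposition~\ref{prop:phi+eval}), by M\"obius inversion combined with Kohnen's identity \eqref{eq:kohnen-identity}, which converts the plus-space Kloosterman sums into the quadratic Weyl sums $T_w(0,d;4c)$; these are evaluated by parametrizing the solutions of $b^2\equiv 0\pmod{4c}$ and checking multiplicativity, with Ramanujan sums appearing at primes dividing $d$. Your claim that each individual cusp pair yields $\frac{L(2s-\frac12,\chi_d)}{\zeta(4s-1)}$ times a Dirichlet polynomial supported on divisors of $w$ is also not quite right: that clean shape holds for the plus-space combination, while the individual $\phi_{\c\b}$ carry extra $2$-adic factors, and controlling these is exactly what Lemma~\ref{lem:0a-kloo-eval} and the vanishing statement \eqref{eq:kloo=0} are for. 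So the plan should be reorganized: first reduce each inner combination to a multiple of $\phi^+$, evaluate $\phi^+$ once, and only then confront the critical-line cancellation on the $m$-side.
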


The proof of this proposition is quite technical, and we will proceed in several steps.
In order to work in the region of absolute convergence, we will evalute the sum
\begin{equation}
	\sum_{\c \in \{\infty,0\}} \overline\phi_{\c\infty}(m,s) \left( \phi_{\c\infty}(n,s) + \mfrac{1+i}{4^{s}} \phi_{\c\a}\Big(\mfrac n4+\kappa_\a,s\Big) \right),
\end{equation}
for $\re(s)$ sufficiently large.
Then, by analytic continuation, we can set $s=\frac 12+ir$ to obtain \eqref{eq:eis-lin-comb-r}.
First, for the term $\c=\infty$, by Lemma~\ref{lem:kloo-cusps} we have
\begin{equation} \label{eq:eis-inf-lin-comb}
	\phi_{\infty\infty}(n,s) + \mfrac{1+i}{4^s} \phi_{\infty\a}\Big(\mfrac n4+\kappa_\a,s\Big) = e\pmfrac 18 \phi^+(n,s),
\end{equation}
where
\begin{equation} \label{eq:def-phi^+}
	\phi^+(n,s) = \sum_{4\mid c>0} \frac{S^+(0,n,c)}{c^{2s}}.
\end{equation}
Here we have written $S^+(m,n,c)=S^+_{1/2}(m,n,c)$ for convenience.
The following proposition evaluates $\phi^+(n,s)$.
It is proved in \cite{ibukiyama-saito} and applied in \cite[Lemma~4]{DIT-cycle}; here we give an alternative proof which uses Kohnen's identity \eqref{eq:kohnen-identity}.

\begin{proposition} \label{prop:phi+eval}
Let $w\in \Z_+$ and let $d$ be a fundamental discriminant. Then
\begin{equation}
	\phi^+(w^2d,s) = 2^{\frac 32-4s}w^{1-2s} \frac{L(2s-\frac 12,\chi_{d})}{\zeta(4s-1)} \fS_d(w,2s-1).
\end{equation}
\end{proposition}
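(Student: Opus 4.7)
The plan is to exploit Kohnen's identity \eqref{eq:kohnen-identity} in the degenerate case $m=0$, combined with Möbius inversion, in order to reduce the evaluation of $\phi^+(d,s)$ to a classical Dirichlet series of representation numbers of binary quadratic forms; the general $w\geq 1$ case is then produced by a further multiplicative analysis of $S^+(0,w^2d;c)$.

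First I verify the symmetry $S^+_{1/2}(a,b;c)=S^+_{1/2}(b,a;c)$: in the defining formula \eqref{eq:sk+def} substitute $d\mapsto\bar d$ and use $\pfrac{c}{\bar d}=\pfrac{c}{d}$ together with $\varepsilon_{\bar d}=\varepsilon_d$ (the latter because $d\bar d\equiv 1\pmod 4$ when $4\mid c$). Specializing Kohnen's identity \eqref{eq:kohnen-identity} to $m=0$, with Kohnen's first argument equal to $1$ (so $(1|n)=1$) and second argument equal to $d$, and then applying this symmetry, yields
\begin{equation*}
T_0(1,d;c)=\sum_{n\mid c/4}\sqrt{2n/c}\,S^+(0,d;c/n).
\end{equation*}
The extension of \eqref{eq:kohnen-identity} to $m=0$ is routine: the underlying Gauss-sum derivation goes through unchanged, with $\gcd(0,c/4)=c/4$ and $m^2d/n^2=0$. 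Setting $c=4c_1$, multiplying through by $\sqrt{2c_1}$, and Möbius-inverting with the completely multiplicative kernel $n\mapsto\sqrt n$ (whose Dirichlet inverse is $\mu(n)\sqrt n$) gives
\begin{equation*}
S^+(0,d;4c_1)=\sqrt{2c_1}\sum_{n\mid c_1}\mu(n)\,T_0(1,d;4c_1/n).
\end{equation*}
Summing against $(4c_1)^{-2s}$ and writing $c_1=nc_2$ then produces
\begin{equation*}
\phi^+(d,s)=\frac{2^{1/2-4s}}{\zeta(2s-\tfrac12)}\sum_{c_2\geq 1}\frac{T_0(1,d;4c_2)}{c_2^{2s-1/2}}.
\end{equation*}

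It remains to evaluate $\Psi(s'):=\sum_{c_2\geq 1}T_0(1,d;4c_2)/c_2^{s'}$ in closed form. When $\gcd(c_2,d)=1$ the Weyl sum equals $2\chi_d(c_2)r(c_2)$, where $r(c_2):=\#\{b\bmod c_2:b^2\equiv d\pmod{c_2}\}$ is the multiplicative representation number; a prime-by-prime computation shows that the resulting Dirichlet series factors as an Euler product matching $2\zeta(s')L(s',\chi_d)/\zeta(2s')$ term by term. Substituting this evaluation back gives the $w=1$ case of the proposition, after the powers of $2$ align. For general $w\geq 1$, the factor $w^{1-2s}\fS_d(w,2s-1)$ arises from a multiplicative decomposition of $S^+(0,w^2d;c)$ with respect to $\gcd(w,c)$, which can be established either by a direct Gauss-sum calculation or by a second application of \eqref{eq:kohnen-identity} with $m=w$ followed by Möbius inversion in $w$ against the completely multiplicative kernel $(d|n)n^{1/2-s}$.

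The main obstacle is the explicit evaluation of $\Psi(s')$ as an Euler product. The local factors at odd primes $p\nmid d$ are standard; but the local factor at $p=2$ is delicate because the restriction $4\mid c$ interacts with the residue of $d\pmod 8$, and the local factor at each prime $p\mid d$ requires separate analysis since the genus character $\chi_d$ vanishes on non-primitive forms (i.e., when $p$ divides all three coefficients of $[c/4,b,(b^2-d)/c]$).
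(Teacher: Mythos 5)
Your strategy is viable and, for the $w=1$ case, takes a genuinely different route from the paper's. The paper performs a single M\"obius inversion in $w$ against the kernel $\chi_d(\ell)\ell^{\frac12-2s}$, recognizes the result via Kohnen's identity as $2^{\frac12-4s}\sum_c T_w(0,d;4c)\,c^{\frac12-2s}$ (degenerate Weyl sums at discriminant \emph{zero}), and then evaluates $T_w(0,d;4c)$ from scratch by parametrizing $b^2\equiv 0\pmod{4c}$, producing Ramanujan sums at $p\mid d$ and divisor conditions at $p\nmid d$. You instead specialize \eqref{eq:kohnen-identity} at $m=0$ with discriminant $D=d$, and after M\"obius inversion in $c$ you reduce $\phi^+(d,s)$ to $\sum_{c}\#\{b\bmod 4c: b^2\equiv d\pmod{4c}\}\,c^{-s'}$ with $s'=2s-\frac12$. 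Your intermediate identities check out (I verified $T_0(1,d;c)=\sum_{n\mid c/4}\sqrt{2n/c}\,S^+(0,d;c/n)$ numerically for $d=5$, $c=4,8$, and the symmetry $S^+(m,n,c)=S^+(n,m,c)$ holds for the reason you give). Two simplifications you are missing: since $d$ is fundamental every form of discriminant $d$ is primitive and the genus character attached to $D=d\cdot 1$ is identically $1$, so the ``delicate'' vanishing on imprimitive forms does not arise in $\Psi(s')$ at all; and since $(b+2c)^2\equiv b^2\pmod{4c}$, your series is twice the classical one, $\sum_c\#\{b\bmod 2c:b^2\equiv d\pmod{4c}\}c^{-s}=\zeta(s)L(s,\chi_d)/\zeta(2s)$, which is citable (Zagier, Cohen) rather than something to re-derive prime by prime. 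So what your route buys is a $w=1$ case resting on a standard Dirichlet series instead of a bespoke computation.

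The genuine gap is the general-$w$ case, which is where the entire factor $w^{1-2s}\fS_d(w,2s-1)$ --- the actual content of the proposition beyond $w=1$ --- lives, and which you dispatch in one sentence with two unexecuted alternatives. The second alternative (apply \eqref{eq:kohnen-identity} with $m=w$ and M\"obius-invert in $w$ against $\chi_d(n)n^{\frac12-2s}$) is precisely the paper's proof; following it lands you back on the discriminant-zero sums $T_w(0,d;4c)$, where your $w=1$ shortcut does not apply and the paper's multiplicative computation must be done anyway. The first alternative (a direct decomposition of $S^+(0,w^2d;c)$ with respect to $\gcd(w,c)$) is plausible but is real, undone work, and the imprimitive-form issue you flag arises exactly there. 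Finally, the extension of \eqref{eq:kohnen-identity} to $m=0$ (so the divisor condition becomes $n\mid c/4$) deserves more than ``routine''; the paper leans on the mirror-image degenerate case $d'=0$ equally implicitly, so you are no worse off, but a complete write-up should justify whichever degenerate case it uses.
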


\begin{proof}
By M\"obius inversion, it suffices to prove that
\begin{equation}
	\sum_{\ell\mid w} \chi_{d}(\ell) \ell^{\frac 12-2s} \phi^+\Big(\mfrac{w^2}{\ell^2}d,s\Big) = 2^{\frac 32-4s} w^{1-2s} \tau_{2s-1}(w) \frac{L(2s-\frac 12,\chi_{d})}{\zeta(4s-1)}.
\end{equation}
Writing $\phi^+$ as the Dirichlet series \eqref{eq:def-phi^+}, reversing the order of summation, and applying the identity \eqref{eq:kohnen-identity}, we find that
\begin{align*}
	\sum_{\ell\mid w} \chi_{d}(\ell) \ell^{\frac 12-2s} \phi^+\Big(\mfrac{w^2}{\ell^2}d,s\Big) 
	&= \mfrac{1}{\sqrt 2} \sum_{4\mid c>0} \frac{1}{c^{2s-\frac 12}} \sum_{\ell\mid (w,\frac c4)} \chi_{d}(\ell) \sqrt{\mfrac{2\ell}c} \, S^+\Big(0,\mfrac{w^2}{\ell^2}d;\mfrac c\ell\Big) \\*
	&= 2^{\frac 12-4s} \sum_{c=1}^\infty \frac{T_{w}(0,d;4c)}{c^{2s-\frac 12}}.
\end{align*}
To evaluate $T_w(0,d;4c)$ for a given $c$, we write $4c=tu$, where
\begin{equation}
	u=\prod_{p^a\mid\mid 4c} p^{\lceil \frac a2 \rceil} \quad \text{ and } \quad t=\prod_{p^a\mid\mid 4c} p^{\lfloor \frac a2 \rfloor}.
\end{equation}
Then $b^2\equiv 0\pmod{4c}$ if and only if $b=xu$ for some $x$ modulo $t$.
For each such $b$, let $g=(x,\frac t2)$ and choose $\lambda\in \Z$ such that
\begin{equation}
	\gamma = \pMatrix{t/2g}{x/g}{\lambda}{\frac{1+\lambda x/g}{t/2g}} \in \SL_2(\Z).
\end{equation}
Then $\gamma[c,b,b^2/4c] = [u g^2/t, 0, 0]$ and $\chi_d([c,b,b^2/4c]) = \chi_d(ug^2/t)$.
It follows that
\begin{equation}
	T_w(0,d;4c) = 2\chi_d(u/t) \sum_{\substack{x\bmod t/2 \\ (x,t/2,d)=1}} e\pmfrac{mx}{t/2} =: 2f(c).
\end{equation}
It is straightforward to verify that $f(c)$ is a multiplicative function and that for each prime $p$ we have
\begin{align}
	\text{ if }p\mid d\text{ then } f(p^a) &= 
	\begin{cases}
		c_{p^{\frac a2}}(w) & \text{ if $a$ is even}, \\
		0 & \text{ if $a$ is odd},
	\end{cases}
	\\
	\text{ if }p\nmid d\text{ then } f(p^a) &= \chi_d(p)^a \times 
	\begin{cases}
		p^{\lfloor \frac a2 \rfloor} & \text{ if }p^{\lfloor \frac a2 \rfloor} \mid w, \\
		0 & \text{ otherwise}.
	\end{cases}
\end{align}
Here $c_q(w)$ is the Ramanujan sum which satisfies
\begin{equation}
	\frac{w^{1-s}\sigma_{s-1}(w)}{\zeta(s)} = \sum_{q=1}^\infty \frac{c_q(w)}{q^s} = \prod_p \sum_{a=0}^\infty \frac{c_{p^a}(w)}{p^s}.
\end{equation}
It follows that
\begin{equation}
	\sum_{c=1}^\infty \frac{f(c)}{c^{2s-\frac 12}} = w^{2-4s}\sigma_{4s-2}(w) \frac{L(2s-\frac 12,\chi_d)}{\zeta(4s-1)}.
\end{equation}
The proposition follows.
\end{proof}

Next we evaluate the term in \eqref{eq:eis-lin-comb-r} corresponding to the cusp $\c=0$.
The following lemma will be useful.

\begin{lemma}	\label{lem:0a-kloo-eval}
Let $k=\frac 12$ and $\nu=\nu_\theta$ and suppose that $n\equiv 0,1\pmod{4}$.
Suppose that $4\mid c$ and $\a=0$ or $2\mid\mid c$ and $\a=\frac 12$ according to whether $n\equiv0$ or $1\pmod{4}$, respectively.
Then
\begin{equation} \label{eq:0a-kloo-eval}
	S_{0\a}(0,\tfrac n4+\kappa_\a,c,\nu_\theta) = \tfrac 14 S_{\infty\infty}(0,n,4c,\nu_\theta).
\end{equation}
\end{lemma}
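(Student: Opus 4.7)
The identity is purely combinatorial, relating sums over double cosets, and I would prove it by an explicit matrix computation in the spirit of Bir\'o's proof of the adjacent Lemma~\ref{lem:kloo-cusps}. The essential task is to construct a four-to-one correspondence between the matrix representatives defining $S_{0\a}$ at modulus $c$ and those defining $S_{\infty\infty}$ at modulus $4c$, and then to track the multiplier and exponential factors through this correspondence.

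First, I would unwind each side in terms of explicit matrices. Any $\gamma\in\sigma_0^{-1}\Gamma_0(4)\sigma_\a$ with lower-left $c$ arises as $\gamma=\sigma_0^{-1}\tilde\gamma\sigma_\a$ for a unique $\tilde\gamma\in\Gamma_0(4)$; for $\a=0$ a direct computation gives $\tilde\gamma=\pmatrix{d}{-c/4}{-4b}{a}$, whose distinguishing entry is the upper-right $-c/4$, while for $\a=\tfrac 12$ an analogous but more intricate expression results from conjugating by $\sigma_{1/2}=\pmatrix 1{-1/2}20$. Matrices in $\Gamma_\infty\backslash\Gamma_0(4)/\Gamma_\infty$ with lower-left $4c$ are parametrized by their lower-right entry $D\bmod 4c$ coprime to $4c$, and each residue $d\bmod c$ arising from the LHS parametrization lifts to the four residues $d+jc\pmod{4c}$ for $j=0,1,2,3$.

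Second, I would sum the four RHS contributions over these lifts. The exponential becomes $e(n(d+jc)/(4c))=e(nd/(4c))\,e(nj/4)$, and the hypotheses $n\equiv 0\pmod 4$ (when $\a=0$) or $n\equiv 1\pmod 4$ (when $\a=\tfrac 12$) pin down these phases precisely. Simultaneously, the Kronecker symbol $\pfrac{4c}{d+jc}$ simplifies via $\pfrac{4c}{\cdot}=\pfrac{c/4}{\cdot}$ in case $4\mid c$, or $\pfrac{4c}{\cdot}=\pfrac{c}{\cdot}$ in case $2\|c$, together with the standard periodicities in both the numerator and in $\ep_{d+jc}\bmod 4$. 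Combining these, the four RHS terms collapse into a single expression proportional to the LHS term, and equating the two sides reduces the lemma to verifying the cocycle identity $\bar\nu_{0\a}(\gamma)=\bar\nu_\theta(\tilde\gamma)\cdot(\text{explicit root-of-unity factor})$ for each matched pair.

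The principal obstacle is the careful evaluation of this cocycle factor, namely the $w$-product $w(\sigma_0\gamma\sigma_\a^{-1},\sigma_\a)/w(\sigma_0,\gamma)$ appearing in $\nu_{0\a}(\gamma)$; the branches of the square roots defining $j(\gamma,z)^{1/2}$ must be tracked consistently, and the case $\a=\tfrac 12$ is especially delicate because $\sigma_{1/2}$ has half-integer entries and $\kappa_{1/2}=\tfrac 34$ causes the Fourier exponent $n_\a=n/4$ to be non-integer. A cleaner alternative that would sidestep much of this bookkeeping is to combine Bir\'o's already-established identity \eqref{eq:kloo-cusps} with the reflection relation $S_{\a\b}(m,n,c,\nu)=\overline{S_{\b\a}(n,m,c,\bar\nu)}$ and the Atkin-Lehner involution $W_4=\sigma_0$, which normalizes $\Gamma_0(4)$, swaps $\infty$ with $0$, and fixes $\tfrac 12$; this reduces \eqref{eq:0a-kloo-eval} to Lemma~\ref{lem:kloo-cusps} up to a direct multiplier computation on representatives.
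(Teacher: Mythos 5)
Your primary route is essentially the paper's: the paper also conjugates by $\sigma_0$ (for $\a=0$ this immediately gives $S_{00}=S_{\infty\infty}$, and the $4$-to-$1$ index lift $d\mapsto d+jc$ is exactly how the factor $\tfrac14$ appears), and for $\a=\tfrac12$ it writes out representatives $\pmatrix abcd=\sigma_0^{-1}\pmatrix ABCD\sigma_{1/2}$, evaluates the multiplier explicitly to get $S_{0\a}(0,\tfrac n4+\kappa_\a,c,\nu_\theta)=\sum_{d\bmod c}\pfrac{4c}{d}\ep_d\,e\pfrac{nd}{4c}$, and then checks that the summand is invariant under $d\mapsto d+c$ precisely because $e(n/4)=i$ cancels against the change in $\pfrac{4c}{\cdot}\ep_{\cdot}$ when $n\equiv1\pmod4$ --- the same phase/symbol bookkeeping you describe. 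The caveat is that what you flag as ``the principal obstacle,'' namely evaluating $\bar\nu_{0\a}(\gamma)$ on representatives (showing $\ep_D=\ep_d$ and $\pfrac CD=\pfrac{4c}{d}$ after normalizing $4\mid b$), is essentially the entire content of the proof, and your plan names it without carrying it out.

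Your proposed ``cleaner alternative'' does not close as stated. Applying the Atkin--Lehner swap reduces $S_{0,\frac12}$ to $S_{\infty,\frac12}$ at modulus $c$ with $2\,\|\,c$, and Bir\'o's identity \eqref{eq:kloo-cusps} then relates that to $S_{\infty\infty}(0,n,2c,\nu_\theta)$ with $4\,\|\,2c$; but the lemma's right-hand side is $\tfrac14 S_{\infty\infty}(0,n,4c,\nu_\theta)$ with $8\,\|\,4c$. These sums live at different moduli and are not related by any identity you have available (indeed, the paper's later computation \eqref{eq:kloo=0} shows the behavior of $S_{\infty\infty}(0,n,\cdot,\nu_\theta)$ on the $2$-adic valuation of the modulus is delicate for $n\equiv1\pmod4$). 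So the shortcut would still require a separate modulus-doubling argument, and you are better off executing the direct computation.
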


\begin{proof}
For each cusp $\a$ we have $(\frac n4+\kappa_\a)_\a = \frac n4$.
Suppose first that $n\equiv 0\pmod{4}$ and $\a=0$.
A straightforward computation shows that $S_{00}(m,n,c,\nu) = S_{\infty\infty}(m,n,c,\nu)$ for all $m,n\in \Z$.
From the definition of $S_{\infty\infty}(m,n,c,\nu)$ it follows that, for $c\equiv 0\pmod{4}$, we have $S_{00}(0,\frac n4,c,\nu) = \frac 14S_{\infty\infty}(0,n,4c,\nu)$.

Now suppose that $n\equiv 1\pmod{4}$ and $\a=\frac 12$.
We will prove \eqref{eq:0a-kloo-eval} directly from the definition of $S_{0\frac 12}(m,n,c,\nu)$.
Let $\pmatrix abcd = \sigma_0^{-1}\pmatrix ABCD \sigma_{\frac 12}$, where $\pmatrix ABCD\in \Gamma_0(4)$.
Then $2\mid\mid c$ and $a,d$ are odd, so (after shifting by $\pmatrix 1*01$ on the right) we can assume that $4\mid b$.
Then $\ep_D = \ep_{a+2b} = \ep_a = \ep_d$
since $ad\equiv 1\pmod{4}$.
We also have
\begin{equation}
	\pmfrac CD = \pmfrac{-4b}{a+2b} = \pmfrac{2a}{a+2b} = (-1)^{\frac{a-1}2} \pmfrac 2a \pmfrac{2b}a = \pmfrac{4c}{a} = \pmfrac{4c}{d}
\end{equation}
since $bc\equiv -1\pmod{a}$ and $ad\equiv 1\pmod{4c}$.
It follows that
\begin{equation}
	S_{0\a}(0,\tfrac n4+\kappa_\a,c,\nu_\theta) = \sum_{d\bmod c} \pmfrac{4c}d \ep_d \, e\pmfrac{nd}{4c}.
\end{equation}
Note that replacing $d$ by $d+c$ has no net effect since $\ep_{d+c}=\ep_{-d}$ and $\pfrac{4c}{d+c} = -\pfrac{-1}{d} \pfrac cd$, so
\begin{equation}
	\pmfrac{4c}{d+c} \ep_{d+c} \, e\pmfrac{n(d+c)}{4c} = \pmfrac{4c}d e\pmfrac{nd}{4c} \left[-\ep_{-d}\pmfrac{-1}d e\pmfrac{n}{4} \right] = \pmfrac{4c}d \ep_d \, e\pmfrac{nd}{4c}
\end{equation}
since $n\equiv 1\pmod{4}$.
The relation~\eqref{eq:0a-kloo-eval} follows.
\end{proof}

\begin{proposition} \label{prop:eis-0-lin-comb}
Let $k=\frac 12$ and $\nu=\nu_\theta$ and suppose that $n\equiv 0,1\pmod{4}$.
Write $n=w^2d$ with $d$ a fundamental discriminant.
Let $\a=0$ or $\frac 12$ according to $n\equiv0,1\pmod{4}$, respectively.
Then
\begin{equation} \label{eq:eis-0a-eval}
	\phi_{0\infty}(n,s) + \mfrac{1+i}{4^s} \, \phi_{0\a}\left(\mfrac n4+\kappa_\a,s\right) = \mfrac i{4^s} w^{1-2s} \frac{L(2s-\frac 12,\chi_{d})}{\zeta(4s-1)} \fS_d(w,2s-1).
\end{equation}
\end{proposition}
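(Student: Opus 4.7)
The plan is to reduce this identity to Proposition~\ref{prop:phi+eval} by rewriting the left-hand side as a scalar multiple of the Dirichlet series $\phi^+(n,s)$ defined in \eqref{eq:def-phi^+}. Applying Proposition~\ref{prop:phi+eval} to simplify the right-hand side, the target becomes
$$\phi_{0\infty}(n,s) + \mfrac{1+i}{4^s}\phi_{0\a}\!\left(\tfrac{n}{4}+\kappa_\a,\, s\right) = i\cdot 2^{2s-\frac{3}{2}}\,\phi^+(n,s),$$
which is the cusp-$0$ counterpart of \eqref{eq:eis-inf-lin-comb}. As in the proof of Proposition~\ref{prop:phi+eval}, I would work in a right half-plane where the series converge absolutely, then analytically continue to $s = \tfrac12+ir$.

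For the second summand, I would apply Lemma~\ref{lem:0a-kloo-eval} to the Dirichlet series defining $\phi_{0\a}(n/4+\kappa_\a, s)$. Noting that $(n/4+\kappa_\a)_\a = n/4$ and substituting $c' = 4c$, this contributes a partial sum of $\phi^+(n,s)$ running over moduli $c'$ with $16 \mid c'$ (when $\a = 0$) or $c' \equiv 8 \pmod{16}$ (when $\a = \tfrac12$).

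The principal obstacle is evaluating the first summand $\phi_{0\infty}(n,s)$. My approach is to establish a companion to Lemma~\ref{lem:0a-kloo-eval}, expressing $S_{0\infty}(0,n,c,\nu_\theta)$ for $c \equiv 2 \pmod 4$ as an explicit scalar multiple of $S_{\frac 12}^+(0,n,2c)$, or equivalently as a linear combination of $S_{\infty\infty}$ and $S_{\infty\a}$ Kloosterman sums at the modulus $2c$. Following the second half of the proof of Lemma~\ref{lem:0a-kloo-eval}, I would parametrize $\sigma_0^{-1}\Gamma_0(4)$ by writing $\sigma_0^{-1}\gamma = \pmatrix{C/2}{D/2}{-2A}{-2B}$ with $\gamma = \pmatrix{A}{B}{C}{D} \in \Gamma_0(4)$, use the constraints $A$ odd and $4 \mid C$ to reduce $B$ modulo $|A|$ so that $d = -2B$ runs over residues mod $c$, and evaluate the multiplier $\nu_{0\infty}(\sigma_0^{-1}\gamma)$ directly from the definition using \eqref{eq:def-theta-mult} and the cocycle $w(\sigma_0, \cdot)$. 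The final simplification should rely on Kronecker-symbol manipulations in the spirit of those at the end of the proof of Lemma~\ref{lem:0a-kloo-eval}, and the explicit Kloosterman sum computations in \cite[Appendix A]{biro} provide a useful template. The hard step is untangling the cocycle factor and the half-integral multiplier to pin down the correct scalar.

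With this companion identity in hand, the first summand becomes a partial sum of $\phi^+(n,s)$ over $c' \equiv 4 \pmod 8$ (after substituting $c' = 2c$). Adding the two contributions and verifying that the residue classes $c' \equiv 4, 8, 0 \pmod{16}$ are covered with the correct weights, the sum recombines to exactly $i \cdot 2^{2s-3/2}\phi^+(n,s)$, and a final appeal to Proposition~\ref{prop:phi+eval} produces the $L$-function expression on the right-hand side.
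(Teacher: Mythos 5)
Your overall architecture matches the paper's: reduce \eqref{eq:eis-0a-eval} to the identity $\phi_{0\infty}(n,s) + \frac{1+i}{4^s}\phi_{0\a}(\frac n4+\kappa_\a,s) = i\cdot 2^{2s-\frac 32}\phi^+(n,s)$ and then invoke Proposition~\ref{prop:phi+eval}, handling the second summand via Lemma~\ref{lem:0a-kloo-eval}. But there is a genuine gap in your recombination step. After rescaling, your first summand produces the moduli $c'\equiv 4\pmod 8$, and Lemma~\ref{lem:0a-kloo-eval} produces \emph{only one} of the two classes $c'\equiv 0\pmod{16}$ or $c'\equiv 8\pmod{16}$ (which one depends on $n\bmod 4$), whereas $\phi^+(n,s)$ runs over \emph{all} $c'\equiv 0\pmod 4$. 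So an entire residue class mod $16$ is never produced by either term, and your claim that the pieces "recombine to exactly $i\cdot 2^{2s-3/2}\phi^+(n,s)$" is false as stated unless one proves that $S_{\infty\infty}(0,n,c,\nu_\theta)$ vanishes on the missing class ($c\equiv 8\pmod{16}$ when $n\equiv 0\pmod 4$, and $c\equiv 0\pmod{16}$ when $n\equiv 1\pmod 4$). This is precisely the content of \eqref{eq:kloo=0} in the paper, proved there by factoring the Kloosterman sum into a Gauss sum times a $2$-power part (Sturm's lemma) and showing the $2$-power part vanishes. Your list of covered classes "$4,8,0\pmod{16}$" both omits $12$ (which is in fact covered) and overlooks that only one of $8,0$ actually occurs, which suggests the issue was not noticed.

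Secondarily, your plan for $\phi_{0\infty}(n,s)$ — a from-scratch parametrization of $\sigma_0^{-1}\Gamma_0(4)$ and direct evaluation of the cocycle — is left with its "hard step" unresolved, and is considerably more laborious than necessary. The paper instead observes the symmetry $S_{0\infty}(m,n,c,\nu_\theta)=iS_{\infty 0}(m,n,c,\nu_\theta)$ (a short computation) and then reuses Lemma~\ref{lem:kloo-cusps} to convert the resulting sums over $c\equiv 2\pmod 4$ into sums of $S_{\infty\infty}$ over $4\mid\mid c$, so no new cocycle evaluation is needed. I would recommend adopting that route and, above all, adding the vanishing statement \eqref{eq:kloo=0}; without it the proof does not close.
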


\begin{proof}
We will prove that
\begin{equation}
	\phi_{0\infty}(n,s) + \mfrac{1+i}{4^s} \, \phi_{0\a}\left(\mfrac n4+\kappa_\a,s\right) =
	 i\cdot 2^{2s-\frac 32} \phi^+(n,s);
\end{equation}
then equation \eqref{eq:eis-0a-eval} will follow from Proposition~\ref{prop:phi+eval}.
A straightforward computation gives the relation $S_{0\infty}(m,n,c,\nu_\theta)=iS_{\infty0}(m,n,c,\nu_\theta)$.
This, together with Lemma~\ref{lem:kloo-cusps}, shows that
\begin{equation} \label{eq:phi0infty}
	\phi_{0\infty}(n,s) = \mfrac{2^{2s}}{1-i} \sum_{4\mid\mid c>0} \frac{S_{\infty\infty}(0,n,c,\nu_\theta)}{c^{2s}}.
\end{equation}
Next, by Lemma~\ref{lem:0a-kloo-eval} we find that
\begin{equation} \label{eq:phi0a}
	\mfrac{1+i}{4^s} \, \phi_{0\a}\left(\mfrac n4+\kappa_\a,s\right) = \mfrac{2^{2s}}{2(1-i)} \sum \frac{S_{\infty\infty}(0,n,c,\nu_\theta)}{c^{2s}},
\end{equation}
where the sum is over $c\equiv 0\pmod{16}$ if $\a=0$, or $c\equiv 8\pmod{16}$ if $\a=\frac 12$.
We claim that we can let the sum run over all $c\equiv 0\pmod{8}$ in either case.
Equivalently,
\begin{equation} \label{eq:kloo=0}
	S_{\infty\infty}(0,n,c,\nu_\theta)=0 \quad \text{ when } \quad
	\begin{cases}
		c\equiv 8\pmod{16} & \text{ if }n\equiv 0\pmod{4}, \\
		c\equiv 0\pmod{16} & \text{ if }n\equiv 1\pmod{4}.
	\end{cases}
\end{equation}

To see this, we decompose the Kloosterman sum as follows (see Lemma~1 of \cite{sturm}): if $c=2^t c'$ with $c'$ odd, then
\begin{equation}
	S_{\infty\infty}(0,n,c,\nu_\theta) = \ep_{c'}^{-1} G(n,c') \sum_{r\bmod 2^t} \pmfrac{2^t}{r} \ep_r e\pmfrac{nr}{2^t},
\end{equation}
where $G(n,c')$ is a Gauss sum.
In the case $n\equiv 0\pmod{4}$ and $c\equiv 8\pmod{16}$ it is easy to see that
\begin{equation}
	\sum_{r\bmod 8} \pmfrac{8}{r} \ep_r e\pmfrac{nr}{8} = 0.
\end{equation}
If $n\equiv 1\pmod{4}$ then, by replacing $r$ by $r+2^{t-2}$, we see that
\begin{equation}
	\sum_{r\bmod 2^t} \pmfrac{2^t}{r} \ep_r e\pmfrac{nr}{2^t} = e\pmfrac{n}{4} \sum_{r\bmod 2^t} \pmfrac{2^t}{r} \ep_r e\pmfrac{nr}{2^t}
\end{equation}
as long as $t\geq 4$, from which it follows that the sum modulo $2^t$ is zero.

By \eqref{eq:phi0infty}, \eqref{eq:phi0a}, and \eqref{eq:kloo=0}, we conclude that
\begin{align}
	\phi_{0\infty}(n,s) + \mfrac{1+i}{4^s} \, \phi_{0\a}\left(\mfrac n4+\kappa_\a,s\right) 
	&= \mfrac{2^{2s-1}}{1-i} \left( 2\sum_{4\mid\mid c>0} \frac{S_{\infty\infty}(0,n,c,\nu_\theta)}{c^{2s}} + \sum_{8\mid c>0} \frac{S_{\infty\infty}(0,n,c,\nu_\theta)}{c^{2s}} \right) \\
	&= i\cdot 2^{2s-\frac 32} \phi^+(n,s),
\end{align}
which completes the proof of the proposition.
\end{proof}

\begin{proof}[Proof of Proposition~\ref{prop:eis-lin-comb-r}]

By equation \eqref{eq:eis-inf-lin-comb} and Propositions~\ref{prop:phi+eval} and \ref{prop:eis-0-lin-comb} we have
\begin{multline}
	\sum_{\c \in \{\infty,0\}} \overline\phi_{\c\infty}(m,s) \left( \phi_{\c\infty}(n,s) + \mfrac{1+i}{4^{s}} \phi_{\c\a}\Big(\mfrac n4+\kappa_\a,s\Big) \right) \\
	 = \left(e\pmfrac 18 2^{\frac 32-4s}\overline\phi_{\infty\infty}(m,s) + i\cdot 2^{-2s}\overline\phi_{0\infty}(m,s)  \right) w^{1-2s} \frac{L(2s-\frac 12,\chi_{d})}{\zeta(4s-1)} \fS(w^2d,2s-1).
\end{multline}
Then by \eqref{eq:phi0infty} we have (writing $s=\sigma+ir$)
\begin{align}
	e\pmfrac 18 2^{\frac 32-4s} & \overline\phi_{\infty\infty}(m,s) + i\cdot 2^{-2s}\overline\phi_{0\infty}(m,s) \\
	&= (1+i) 2^{1-4s} \sum_{4\mid c>0} \frac{S_{\infty\infty}(0,m,c,\nu_\theta)}{c^{2\bar s}} + \mfrac{4^{\bar s-s}}{1-i} \sum_{4\mid\mid c>0} \frac{S_{\infty\infty}(0,m,c,\nu_\theta)}{c^{2\bar s}} \\
	&= \mfrac{2^{-4ir}}{1-i} \Big(\phi^+(m,\bar s) + (4^{1-2\sigma}-1) \phi_{\infty\infty}(m,\bar s) \Big).
\end{align}
The proposition follows after applying Proposition~\ref{prop:phi+eval} and setting $s=\frac 12+ir$, noting that the factor $4^{1-2\sigma}-1$ in the second term vanishes.
\end{proof}

\section{Proof of Theorem~\ref{thm:main-kloo}} \label{sec:proof}

Let $a=4\pi\sqrt{mn}$ and $x>0$ and let $x^{\frac 13}\ll T \ll x^{\frac 23}$ be a free parameter to be chosen later.
We choose a test function $\varphi=\varphi_{a,x,T}:[0,\infty)\to[0,1]$ satisfying
\begin{enumerate}[(i)] \setlength\itemsep{.5em}
	\item $\varphi(t)=1$ for $\mfrac{a}{2x}\leq t\leq \mfrac ax$,
	\item $\varphi(t)=0$ for $t\leq \mfrac{a}{2x+2T}$ and $t\geq \mfrac{a}{x-T}$,
	\item $\varphi'(t) \ll \left( \mfrac{a}{x-T} - \mfrac ax \right)^{-1} \ll \mfrac{x^2}{aT}$, and
	\item $\varphi$ and $\varphi'$ are piecewise monotonic on a fixed number of intervals (whose number is independent of $a,x,T$).
\end{enumerate}
We apply the plus space Kuznetsov formula in Theorem~\ref{thm:KTFplus-space+} with this test function and we estimate each of the terms on the right-hand side.

We begin by estimating the contribution from the holomorphic cusp forms
\begin{equation} \label{eq:K-h}
	\mathcal K^{h} := \sum_{\ell\equiv k\bmod 2} e\ptfrac{\ell-k}4 \tilde\varphi(\ell) \Gamma(\ell) \sum_{g\in \mathcal H_\ell^+} \overline{\rho_{g}(m)}\rho_{g}(n).
\end{equation}
Since the operator $L$ commutes with the Hecke operators we may assume that the orthonormal basis $\mathcal H_\ell^+$ is also a basis consisting of Hecke eigenforms.
We will estimate $\mathcal K^h$ by applying the Kohnen-Zagier formula \cite{kohnen-zagier} and Young's hybrid subconvexity bound \cite{young-subconvexity}.
Let $g \in \mathcal H_\ell^+$ and recall that the coefficients of $g$ are normalized so that
\begin{equation}
	g(z) = \sum_{n=1}^{\infty} (4\pi n)^{\frac{\ell-1}2}\rho_g(n) e(nz).
\end{equation}
Since we are working in the plus space, the Shimura correspondence is an isomorphism between $\mathcal S_\ell^+(\nu)$ and the space $\mathcal S_{2\ell-1}$ of (even) weight $2\ell-1$ cusp forms on $\Gamma_1$.
So $g$ lifts to a unique normalized $f \in S_{2\ell-1}$ with Fourier expansion
\begin{equation}
	f(z) = \sum_{n=1}^\infty n^{\ell-1}a_f(n) e(nz), \quad \text{ where } a_f(1)=1.
\end{equation}
The coefficients $\rho_g$ and $a_f$ are related via
\begin{equation} \label{eq:g-f-shimura}
	\rho_g(v^2|d|) = \rho_g(|d|) \sum_{u\mid v} \mu(u) \pmfrac du u^{-\frac 12} a_f(v/u),
\end{equation}
where $d$ is a fundamental discriminant with $(-1)^\lambda d>0$.
Using Deligne's bound $|a_f(n)|\leq \sigma_0(n)$, it follows that
\begin{equation} \label{eq:rho-g-non-sqfree}
	|\rho_g(v^2|d|)| \leq |\rho_g(|d|)| \sigma_0^2(v).
\end{equation}

Suppose that $g$ is normalized so that $\langle g,g \rangle=1$.
If $d$ is a fundamental discriminant satisfying $(-1)^\lambda d>0$ then the Kohnen-Zagier formula \cite[Theorem~1]{kohnen-zagier} can be written as
\begin{equation}
	\Gamma(\ell) |\rho_g(|d|)|^2 = 4\pi \frac{\Gamma(2\ell-1)}{(4\pi)^{2\ell-1}\langle f,f \rangle} L(\tfrac 12,f\times \chi_d),
\end{equation}
where $L(s,f\times \chi_d)$ is the twisted $L$-function with Dirichlet series
\begin{equation} \label{eq:Lfs-def}
	L(s,f\times \chi_d) = \sum_{m=1}^\infty \frac{a_f(m) \chi_d(m)}{m^s}.
\end{equation}
By a result of Hoffstein and Lockhart (see \cite[Corollary~0.3]{hoffstein-lockhart} and the second remark that follows it, and note that their normalization differs from ours)
we have the bound
\begin{equation}
	\frac{\Gamma(2\ell-1)}{(4\pi)^{2\ell-1}\langle f,f \rangle} \ll \ell^\ep,
\end{equation}
so we conclude that
\begin{equation}
	\Gamma(\ell) |\rho_g(|d|)|^2 \ll L(\tfrac 12, f\times \chi_d) \ell^\ep.
\end{equation}
Let $\mathcal H_{2\ell-1}$ be the image in $\mathcal S_{2\ell-1}$ of the Shimura lift of $\mathcal H_\ell^+(\nu)$.
Young's hybrid subconvexity bound \cite[Theorem~1.1]{young-subconvexity} yields
\begin{equation}
	\sum_{f\in \mathcal H_{2\ell-1}} L(\tfrac 12, f\times \chi_d)^3 \ll (\ell d)^{1+\varepsilon}
\end{equation}
for odd fundamental $d$.
See Appendix~\ref{sec:appendix} for the case of even fundamental discriminants $d$.
Applying H\"older's inequality in the case $\frac 16+\frac 16+\frac 23=1$, together with the fact that $\# \mathcal H_{2\ell-1}\asymp \ell$, we obtain the following theorem for $d,d'$ fundamental discriminants.
It is extended to all $m,n$ using \eqref{eq:rho-g-non-sqfree}.

\begin{theorem} \label{thm:young-hol-coeff}
Let $\ell \equiv k \pmod{2}$ with $k=\pm \frac 12=\lambda+\frac 12$ and
suppose that $\mathcal H_{\ell}^+$ is an orthonormal basis for $\mathcal S_\ell^+$ consisting of Hecke eigenforms.
Suppose that $m,n$ are integers with $(-1)^\lambda m, (-1)^\lambda n >0$, and write $(-1)^\lambda m=v^2 d'$ and $(-1)^\lambda n=w^2 d$ with $d,d'$ fundamental discriminants.
Then
\begin{equation}
	\Gamma(\ell) \sum_{g\in \mathcal H_{\ell}^+} |\rho_g(|m|)\rho_g(|n|)| \ll \ell |dd'|^{\frac 16+\varepsilon}(vw)^\varepsilon.
\end{equation}
\end{theorem}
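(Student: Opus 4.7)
The plan is to assemble three ingredients that have already been set up immediately before the statement: the Kohnen-Zagier formula, which gives $\Gamma(\ell)|\rho_g(|d|)|^2 \asymp L(\tfrac12,f\times\chi_d)$ for $f$ the Shimura lift of $g$; the bound \eqref{eq:rho-g-non-sqfree}, which reduces non-fundamental indices to fundamental ones; and Young's cubic moment subconvexity estimate $\sum_{f\in\mathcal H_{2\ell-1}}L(\tfrac12,f\times\chi_d)^3 \ll (\ell d)^{1+\varepsilon}$ for odd fundamental $d$.

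First I would reduce to the case $m=|d'|$, $n=|d|$. By \eqref{eq:rho-g-non-sqfree},
\[
|\rho_g(|m|)\rho_g(|n|)| \leq |\rho_g(|d'|)\rho_g(|d|)|\,\sigma_0(v)^2\sigma_0(w)^2,
\]
and the divisor factors are absorbed into the final $(vw)^\varepsilon$. It therefore suffices to prove
\[
\Gamma(\ell)\sum_{g\in\mathcal H_\ell^+}|\rho_g(|d'|)\rho_g(|d|)| \ll \ell|dd'|^{1/6+\varepsilon}
\]
when $d,d'$ are odd fundamental discriminants satisfying $(-1)^\lambda d, (-1)^\lambda d' > 0$.

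Next, I would apply H\"older's inequality with exponents $\tfrac16+\tfrac16+\tfrac23=1$ to the three-term factorization $|\rho_g(|d'|)|\cdot|\rho_g(|d|)|\cdot 1$:
\[
\sum_g|\rho_g(|d'|)\rho_g(|d|)| \leq \Bigl(\sum_g|\rho_g(|d'|)|^6\Bigr)^{1/6}\Bigl(\sum_g|\rho_g(|d|)|^6\Bigr)^{1/6}(\#\mathcal H_\ell^+)^{2/3}.
\]
The third factor is $O(\ell^{2/3})$ from the dimension count $\#\mathcal H_\ell^+ \asymp \ell$. For each of the first two sixth-moments, the cube of Kohnen-Zagier gives $\Gamma(\ell)^3|\rho_g(|d|)|^6 \asymp L(\tfrac12,f\times\chi_d)^3$, and Young's bound then yields $\Gamma(\ell)^3\sum_g|\rho_g(|d|)|^6 \ll (\ell d)^{1+\varepsilon}$, so each contributes $\ll (\ell d)^{1/6+\varepsilon}\Gamma(\ell)^{-1/2}$. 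Multiplying the three factors, the $\Gamma(\ell)^{-1}$ picked up on the right is cancelled by the prefactor $\Gamma(\ell)$, leaving exactly $\ell|dd'|^{1/6+\varepsilon}$.

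There is no real obstacle, only a bookkeeping point: Young's theorem is stated for Hecke eigenbases of weight $2\ell-1$ newforms on $\Gamma_0(1)$. Since the Kohnen operator $L$ commutes with the Hecke operators $T_{p^2}$, the assumed basis $\mathcal H_\ell^+$ of Hecke eigenforms lifts via the Hecke-equivariant Shimura correspondence \eqref{eq:g-f-shimura} to the desired orthogonal basis of Hecke eigenforms in $\mathcal S_{2\ell-1}$, so Young's bound applies directly to the sum arising from the Kohnen-Zagier identity.
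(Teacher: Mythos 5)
Your proposal is correct and follows essentially the same route as the paper: reduce to fundamental discriminants via the Shimura-lift bound \eqref{eq:rho-g-non-sqfree}, convert $\Gamma(\ell)|\rho_g(|d|)|^2$ to $L(\tfrac12,f\times\chi_d)$ by Kohnen--Zagier, and apply H\"older with exponents $\tfrac16+\tfrac16+\tfrac23=1$ against Young's cubic-moment bound and the dimension count $\#\mathcal H_{2\ell-1}\asymp\ell$. The only (immaterial) difference is that you perform the reduction to fundamental indices before the H\"older step rather than after.
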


Applying Theorem~\ref{thm:young-hol-coeff} to the sum \eqref{eq:K-h} we find that
\begin{equation}
	\mathcal K^h \ll |dd'|^{\frac 16+\varepsilon} (vw)^\varepsilon \sum_{\ell\equiv k(2)} \ell \, \tilde\varphi(\ell).
\end{equation}
The latter sum was estimated in \cite{sarnak-tsimerman} (see the discussion following (50); see also Lemma~5.1 of \cite{dunn}) where the authors found that $\sum_\ell \ell \, \tilde\varphi(\ell) \ll \sqrt{mn}/x$.
We conclude that
\begin{equation} \label{eq:K-h-est}
	\mathcal K^h \ll \frac{vw|dd'|^{\frac 23}}{x}(mn)^\varepsilon.
\end{equation}

Next, we estimate the contribution from the Maass cusp forms
\begin{equation}
	\mathcal K^m := \sqrt{mn} \sum_{j\geq 0} \frac{\overline{\rho_{j}(m)}\rho_{j}(n)}{\cosh\pi r_j} \hat\varphi(r_j).
\end{equation}
We follow the same general idea as in the holomorphic case, but instead of the Kohnen-Zagier formula we apply a formula of Baruch and Mao \cite{baruch-mao}.
As in the holomorphic case, we may assume that the orthonormal basis $\{u_j\}$ of $\mathcal V_k^+$ consists of eigenforms for the Hecke operators. 
Suppose that $u_j \in \mathcal V_k^+$ has spectral parameter $r_j$.
The lowest eigenvalue is $\lambda_0=\frac 3{16}$ which corresponds to $u_0=y^{1/4}\theta(z)$ or its conjugate.
Since the coefficients $\rho_0(n)$ are supported on squares and since $m,n$ are not both squares, we find that the term in $\mathcal K^m$ corresponding to $j=0$ does not appear.
In what follows we assume that $j\geq 1$.

Theorem~1.2 of \cite{baruch-mao} shows that there is a unique normalized Maass cusp form $v_j$ of weight~$0$ with spectral parameter $2r_j$ which is even if $k=\frac 12$ and odd if $k=-\frac 12$, and such that the Hecke eigenvalues of $u_j$ and $v_j$ agree.
Since there are no exceptional eigenvalues for weight~$0$ on $\SL_2(\Z)$ this lift implies that there are no exceptional eigenvalues in weights $\pm \frac 12$ in the plus space.
It follows that $r_j\geq 0$ for each $j\geq 1$ (in fact $r_1\approx 1.5$).
If $a_j(n)$ is the $n$-th coefficient of $v_j$ (with respect to the Whittaker function, not the $K$-Bessel function) then for $d$ a fundamental discriminant we have
\begin{equation}
	w \, \rho_j(dw^2) = \rho_j(d) \sum_{\ell\mid w} \ell^{-1}\mu(\ell)\chi_d(\ell) a_j(w/\ell).
\end{equation}
Let $\theta$ denote an admissible exponent toward the Ramanujan conjecture in weight $0$; we have $\theta\leq \frac 7{64}$ by work of Kim and Sarnak \cite{kim-sarnak}.
Then $a_j(w)\ll w^{\theta+\ep}$ since $v_j$ is normalized so that $a_j(1)=1$. 
It follows that
\begin{equation}
	w |\rho_j(dw^2)| \ll w^{\theta+\ep} |\rho_j(d)|.
\end{equation}

Suppose that $d$ is a fundamental discriminant and that $\langle u_j, u_j \rangle =1$.
Then Theorem~1.4 of \cite{baruch-mao} implies that
\begin{equation}
	|\rho_j(d)|^2 = \frac{L(\frac 12, v_j \times \chi_d)}{\pi |d|\langle v_j,v_j \rangle} \left|\Gamma\left(\mfrac{1-k\sgn d}2 - ir_j\right)\right|^2,
\end{equation}
where $L(\frac 12, v_j\times \chi_d)$ is defined in a similar way as \eqref{eq:Lfs-def}.
Hoffstein and Lockhart \cite[Corollary~0.3]{hoffstein-lockhart} proved that $\langle v_j,v_j \rangle^{-1} \ll (1+r_j)^\varepsilon e^{2\pi r_j}$ (again, note that the Fourier coefficients are normalized differently in that paper).
It follows that
\begin{equation}
	|d| \sum_{r_j\leq x} \frac{|\rho_j(d)|^2}{\cosh\pi r_j} \ll \sum_{2r_j\leq 2x} (1+r_j)^{-k\sgn(d)+\varepsilon}  L(\tfrac 12,v_j\times \chi_d).
\end{equation}
Young's subconvexity result \cite[Theorem~1.1]{young-subconvexity} in this case shows that
\begin{equation}
	\sum_{T\leq r_j \leq T+1} L(\tfrac 12, v_j \times \chi_d)^3 \ll (|d|(1+T))^{1+\varepsilon}.
\end{equation}
After applying H\"older's inequality as above, we obtain the following.

\begin{theorem} \label{thm:maass-young-est}
Let $k=\pm \frac 12=\lambda+\frac 12$.
Suppose that $\{u_j\}$ is an orthonormal basis for $\mathcal V_k^+$ consisting of Hecke eigenforms with spectral parameters $r_j$ and coefficients $\rho_j$.
Suppose that $m,n$ are integers with $(-1)^\lambda m, (-1)^\lambda n >0$, and write $(-1)^\lambda m=v^2 d'$ and $(-1)^\lambda n=w^2 d$ with $d,d'$ fundamental discriminants not both equal to $1$.
Then
\begin{equation}
	\sqrt{|mn|} \sum_{r_j \leq x} \frac{|\rho_j(m)\rho_j(n)|}{\cosh\pi r_j} \ll |dd'|^{\frac 16} (vw)^{\theta} x^{2-\frac 12 k (\sgn m+\sgn n)} (mnx)^\varepsilon.
\end{equation}
\end{theorem}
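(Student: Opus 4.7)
The plan is to mimic the proof of Theorem~\ref{thm:young-hol-coeff} from the holomorphic case, substituting Baruch--Mao's formula for Kohnen--Zagier. First, since the hypothesis that $d,d'$ are not both equal to $1$ implies that at least one of $|m|, |n|$ is non-square, and since the lowest eigenform $u_0 = y^{\frac 14}\theta$ (or its conjugate when $k=-\frac 12$) has Fourier coefficients supported on squares, we have $\rho_0(m)\rho_0(n)=0$, and the $j=0$ term may be dropped.

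Next, I would use the Shimura-type decomposition $v\rho_j(v^2 D') = \rho_j(D')\sum_{\ell\mid v}\ell^{-1}\mu(\ell)\chi_{D'}(\ell) a_j(v/\ell)$, with $D':=(-1)^\lambda d'$ and $D:=(-1)^\lambda d$ the signed fundamental discriminants matching the signs of $m,n$, together with the Kim--Sarnak bound $a_j(\ell)\ll \ell^{\theta+\varepsilon}$, to reduce to the fundamental case:
\[
	\sqrt{|mn|}\,|\rho_j(m)\rho_j(n)| \ll (vw)^{\theta+\varepsilon}\sqrt{|dd'|}\,|\rho_j(D)\rho_j(D')|.
\]
A single Cauchy--Schwarz then separates the spectral sum into two second-moment sums, each bounded by the displayed inequality immediately preceding the theorem (applied with $D$, resp.~$D'$, in place of $d$):
\[
	|D|\sum_{r_j\leq x}\frac{|\rho_j(D)|^2}{\cosh\pi r_j} \ll \sum_{r_j\leq x}(1+r_j)^{-k\sgn D+\varepsilon}L(\tfrac 12, v_j\times \chi_D).
\]

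To estimate this last sum, I would partition $[0,x]$ into unit intervals $[T,T+1]$, count $\ll T$ spectral parameters in each via Weyl's law, and apply H\"older's inequality with exponents $(\tfrac 13,\tfrac 23)$ together with Young's third moment bound \cite[Theorem~1.1]{young-subconvexity} to obtain
\[
	\sum_{T\leq r_j\leq T+1} L(\tfrac 12, v_j\times\chi_D) \ll |D|^{\frac 13+\varepsilon}T^{1+\varepsilon}.
\]
Summing over $T\leq x$ against the weight $(1+T)^{-k\sgn D}$ and using the identity $k\sgn D = k\sgn m = \tfrac k2(\sgn m+\sgn n)$ (which holds because $\sgn D=\sgn m=\sgn n=(-1)^\lambda$ and $k(-1)^\lambda = |k|$ for $k=\pm\frac 12$) yields
\[
	\sum_{r_j\leq x}\frac{|\rho_j(D)|^2}{\cosh \pi r_j} \ll |D|^{-\frac 23+\varepsilon}x^{2-\frac k2(\sgn m+\sgn n)+\varepsilon}.
\]
Combining with Cauchy--Schwarz and the Shimura reduction gives the theorem.

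The main technical obstacle is the careful bookkeeping of signs. The Gamma factor $|\Gamma(\tfrac{1-k\sgn D}{2}-ir_j)|^2$ in Baruch--Mao's identity contributes, via Stirling, a weight $r_j^{-k\sgn D}$ whose sign is decisive. To obtain the favorable exponent (yielding $x^{3/2}$ rather than $x^{5/2}$) uniformly for both $k=\pm\frac 12$, one must apply Baruch--Mao to the signed fundamental discriminants $D,D'$ whose signs agree with those of $m,n$, not to the positive representatives $d,d'$; equivalently, one uses the formula at the Fourier indices that lie in the plus-space support, which are automatically of the correct sign.
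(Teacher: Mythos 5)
Your proposal is correct and follows essentially the same route as the paper: drop the $j=0$ theta term, use the Baruch--Mao Shimura-type relation with the Kim--Sarnak bound to reduce to fundamental discriminants, convert $|\rho_j(D)|^2$ to $L(\frac 12, v_j\times\chi_D)$ via Baruch--Mao and Hoffstein--Lockhart (with the Stirling analysis of $|\Gamma(\frac{1-k\sgn D}{2}-ir_j)|^2$ producing the weight $(1+r_j)^{-k\sgn D}$), and then combine Young's cubic moment with H\"older and the local Weyl count $\ll T$ per unit interval; your Cauchy--Schwarz followed by two applications of H\"older with exponents $(\frac 13,\frac 23)$ is the same computation as the paper's single H\"older with $\frac 16+\frac 16+\frac 23=1$. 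Your explicit bookkeeping of the signed discriminants $D=(-1)^\lambda d$, $D'=(-1)^\lambda d'$ and the identity $k\sgn D=|k|=\frac k2(\sgn m+\sgn n)$ is a correct (and welcome) clarification of a point the paper leaves implicit.
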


To estimate $\mathcal K^m$ we consider the dyadic sums
\begin{equation}
	\mathcal K^m(A) := \sqrt{mn} \sum_{A\leq r_j < 2A} \frac{\bar{\rho_j(m)}\rho_j(n)}{\cosh\pi r_j} \hat\varphi(r_j)
\end{equation}
for $A\geq 1$.
Theorem~\ref{thm:maass-young-est} gives one estimate for the coefficients $|\rho_j(m)\rho_j(n)|$.
Applying Cauchy-Schwarz and Theorem~4.1 with $\beta=\frac 12+\varepsilon$ we obtain a second estimate:
\begin{equation}
  	\sqrt{mn} \sum_{r_j \leq A} \frac{|\rho_j(m)\rho_j(n)|}{\cosh\pi r_j} \ll A^{-k} \left(A^2+(m+n)^{\frac 14}A+(mn)^{\frac 14}\right)(mnA)^\varepsilon.
\end{equation} 
These theorems together imply that
\begin{equation}
	\sqrt{mn} \sum_{A\leq r_j < 2A} \frac{|\rho_j(m)\rho_j(n)|}{\cosh\pi r_j} \\ \ll A^{-k} \min\left( (dd')^{\frac 16}(vw)^{\theta}A^2, A^2+(m+n)^{\frac 14}A+(mn)^{\frac 14} \right)(mnA)^\varepsilon.
\end{equation}
The following lemma gives an estimate for $\hat\varphi(r)$.

\begin{lemma} \label{lem:phi-hat-est}
If $r\geq 1$ then with $\varphi=\varphi_{a,x,T}$ as above we have
\begin{equation}
	\hat\varphi(r) \ll r^k \min\left(r^{-\frac 32},r^{-\frac 52}\frac xT\right).
\end{equation}
If $|r|\leq 1$ then $\hat\varphi(r)\ll |r|^{-2}$.
\end{lemma}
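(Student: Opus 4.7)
The plan is to split the estimate into the regime $r\geq 1$, where asymptotic analysis of the Bessel kernel produces an oscillatory integral amenable to integration by parts, and the regime $|r|\leq 1$, where the bound is essentially trivial.

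\textbf{Case $r \ge 1$.} First I would analyze the prefactor. By Stirling's formula, $|\Gamma(\tfrac{1-k}{2}\pm ir)|^2 \sim 2\pi r^{-k}e^{-\pi r}$, so together with $\sinh\pi r \sim e^{\pi r}/2$ and $\cosh 2\pi r \sim e^{2\pi r}/2$ one obtains
\begin{equation}
\frac{\xi_k(r)}{\cosh 2\pi r} = O\bigl(r^{k}e^{-2\pi r}\bigr).
\end{equation}
Next, for the Bessel kernel, I would use the power series $J_{\pm 2ir}(t) = (t/2)^{\pm 2ir}/\Gamma(1\pm 2ir)\bigl(1+O(t^{2}/r)\bigr)$ together with $\cos(\tfrac{\pi k}{2}\pm\pi ir) = \cos(\tfrac{\pi k}{2})\cosh(\pi r)\mp i\sin(\tfrac{\pi k}{2})\sinh(\pi r)$. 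Writing $\Gamma(1+2ir)=|\Gamma(1+2ir)|e^{i\theta(r)}$ and $\psi(t,r)=2r\log(t/2)-\theta(r)$, a direct computation shows
\begin{equation}
\cos(\tfrac{\pi k}{2}+\pi ir)J_{2ir}(t)-\cos(\tfrac{\pi k}{2}-\pi ir)J_{-2ir}(t) = \frac{i\,e^{2\pi r}}{2\sqrt{\pi r}}\sin\bigl(\psi(t,r)-\tfrac{\pi k}{2}\bigr)\bigl(1+O(r^{-1})+O(t^{2}/r)\bigr).
\end{equation}
Multiplying by $-i\xi_k(r)/\cosh 2\pi r$ cancels the exponentials, leaving
\begin{equation}
B_k(t,r) := \frac{-i\xi_k(r)}{\cosh 2\pi r}\bigl[\cos(\tfrac{\pi k}{2}+\pi ir)J_{2ir}(t)-\cos(\tfrac{\pi k}{2}-\pi ir)J_{-2ir}(t)\bigr] = \tfrac{\sqrt\pi}{2}\,r^{k-\frac12}\sin\bigl(\psi(t,r)-\tfrac{\pi k}{2}\bigr) + O\bigl(r^{k-\frac32}\bigr),
\end{equation}
uniformly for $t$ on the support of $\varphi$.

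\textbf{Oscillatory integral.} The remaining task is to estimate $\mathcal I(r) := \int_0^\infty e^{\pm i\psi(t,r)}\,\varphi(t)/t\,dt$. Since $\psi'(t) = 2r/t$, I would integrate by parts using $e^{i\psi}\,dt = \tfrac{t}{2ir}\,d(e^{i\psi})$, obtaining
\begin{equation}
\mathcal I(r) = -\frac{1}{2ir}\int_0^\infty \varphi'(t)\,e^{i\psi(t,r)}\,dt,
\end{equation}
with no boundary contribution. The total variation bound $\int|\varphi'|\,dt\ll 1$ yields $|\mathcal I(r)|\ll 1/r$, whence $\hat\varphi(r) \ll r^{k-3/2}$, which is the first half of the minimum. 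For the refined bound I would apply Lemma~\ref{lem:first-deriv} to $\int \varphi'(t)e^{i\psi(t,r)}\,dt$ on each monotonic piece of $\varphi'(t)t$ (an $O(1)$-subdivision of the pieces of $\varphi$). On each piece $G/F' = \pi\varphi'(t)t/r$ is monotonic, and using $|\varphi'(t)|\ll x^{2}/(aT)$ together with $t\asymp a/x$ we have $|\varphi'(t)t|\ll x/T$, so $|F'/G|\gg rT/x$. Lemma~\ref{lem:first-deriv} then gives $\int\varphi'(t)e^{i\psi}\,dt \ll x/(rT)$, and therefore $|\mathcal I(r)|\ll x/(r^{2}T)$. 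Multiplying by $r^{k-1/2}$ produces the refined bound $\hat\varphi(r)\ll r^{k-5/2}x/T$.

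\textbf{Case $|r|\le 1$.} For $r$ near $0$, the factor $\sinh\pi r$ in the denominator of $\xi_k(r)$ supplies a simple pole, so $\xi_k(r)/\cosh 2\pi r = O(|r|^{-1})$. On the other hand, expanding $J_{\pm 2ir}(t) = J_0(t)\pm 2ir\,\partial_\nu J_0(t) + O(r^2)$ and $\cos(\tfrac{\pi k}{2}\pm\pi ir) = \cos\tfrac{\pi k}{2}\mp i\pi r\sin\tfrac{\pi k}{2}+O(r^2)$ shows that the bracketed Bessel combination vanishes at $r=0$ and is $O(|r|)$ uniformly for $t$ on supp $\varphi$. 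The pole is thus cancelled and $B_k(t,r) = O(1)$. Since the support of $\varphi$ has endpoints in ratio $(2x+2T)/(x-T)=O(1)$, one has $\int\varphi(t)/t\,dt\ll 1$, giving $|\hat\varphi(r)|=O(1)\ll|r|^{-2}$ for $|r|\le 1$.

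\textbf{Main obstacle.} The principal technical step is the extraction of the leading oscillatory piece of $B_k(t,r)$: one must use Stirling uniformly to combine the factor $r^k e^{-2\pi r}$ from the prefactor with the $e^{\pi r}/\sqrt r$ contribution from each of $J_{\pm 2ir}(t)$ so that the exponentials cancel exactly and a clean phase $\psi(t,r)$ with $\psi'(t)=2r/t$ emerges. Once this simplification is in place, the integration by parts and the application of the first-derivative test are routine, and the piecewise monotonicity hypothesis (iv) on $\varphi,\varphi'$ ensures that only $O(1)$ monotonic pieces arise when invoking Lemma~\ref{lem:first-deriv}.
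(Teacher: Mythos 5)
Your overall strategy---extract the oscillatory main term of the Bessel kernel, integrate by parts once against the phase $\psi$ to gain a factor $r^{-1}$, then apply Lemma~\ref{lem:first-deriv} to $\int\varphi'(t)e^{i\psi}\,dt$ to gain the further factor $x/(rT)$---is the right one, and it is genuinely different from what the paper does: the paper simply quotes Sarnak--Tsimerman's estimates (43) and (47)--(48) for $e^{-\pi|r|}\int_0^\infty J_{2ir}(t)\varphi(t)\,\frac{dt}{t}$ and multiplies by $\xi_k(r)/\cosh 2\pi r\ll r^ke^{-2\pi r}$ and $|\cos(\tfrac{\pi k}{2}\pm\pi ir)|\ll e^{\pi r}$. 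The gap in your version is the input to this strategy. The expansion $J_{\pm2ir}(t)=(t/2)^{\pm2ir}\Gamma(1\pm2ir)^{-1}\bigl(1+O(t^2/r)\bigr)$ comes from the power series and is an asymptotic only when $t^2\ll r$. The support of $\varphi_{a,x,T}$ sits at $t\asymp a/x=4\pi\sqrt{mn}/x$, which is unbounded in the application of the lemma (in the proof of Theorem~\ref{thm:main-kloo} one takes $x$ as small as $(dd')^{4/9}(vw)^{2/3}$ while $a\asymp vw\sqrt{dd'}$), and $r$ ranges down to $1$. Hence your claim that $B_k(t,r)=\tfrac{\sqrt\pi}{2}r^{k-1/2}\sin(\psi-\tfrac{\pi k}{2})+O(r^{k-3/2})$ holds \emph{uniformly} on the support of $\varphi$ is false once $a/x\gtrsim\sqrt r$. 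In that range one needs the uniform asymptotics for Bessel functions of purely imaginary order, with amplitude $\asymp(t^2+4r^2)^{-1/4}$ and phase $\sqrt{t^2+4r^2}-2r\operatorname{arcsinh}(2r/t)$; since the phase derivative is $\sqrt{t^2+4r^2}/t\ge 2r/t$, your integration by parts and first-derivative test still go through, but the leading term you manipulate must be replaced.

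A second, related defect: even where your expansion is valid, the error term $O(r^{k-3/2})$ is not oscillatory in your write-up, so after integration against $\varphi(t)\,\frac{dt}{t}$ it contributes $O(r^{k-3/2})$ to $\hat\varphi(r)$. The refined bound $r^{k-5/2}x/T$ is the operative half of the minimum exactly when $r>x/T$, and there $r^{k-3/2}\gg r^{k-5/2}x/T$; so as written your argument proves only the first half of the minimum, and it is the second half that makes the dyadic sum over $A$ of $\mathcal K^m(A)$ converge. To repair this you must record that the lower-order terms in the Bessel asymptotics are themselves of the form (smooth amplitude with controlled $t$-derivatives) times $e^{\pm i\psi}$, so that the same integration by parts applies to them. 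The $|r|\le1$ case is essentially fine, except that $\partial_\nu J_\nu(t)\big|_{\nu=0}=\tfrac\pi2 Y_0(t)$ grows like $\log(1/t)$ as $t\to0$, so your $O(1)$ bound for the kernel silently carries a factor $1+|\log(a/x)|$ (harmless in the application, where it is absorbed into $(mnx)^\varepsilon$).
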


\begin{proof}
Recall that
\begin{equation}
\hat\varphi(r) = \frac{-i\,\xi_k(r)}{\cosh 2\pi r} \int_0^\infty \left( \cos(\tfrac{\pi k}2+\pi i r)J_{2ir}(x) - \cos(\tfrac{\pi k}2-\pi i r)J_{-2ir}(x) \right) \varphi(x) \mfrac{dx}{x},
\end{equation}
where $\xi_k(r) \asymp r^k$ as $r\to\infty$.
Sarnak and Tsimerman \cite[(47)--(48)]{sarnak-tsimerman} proved that
\begin{equation}
	e^{-\pi |r|} \int_{0}^\infty J_{2ir}(x) \varphi(x) \mfrac{dx}{x} \ll \min\left(|r|^{-\frac 32},|r|^{-\frac 52}\frac xT\right)
\end{equation}
for $|r|\geq 1$.
The first statement of the lemma follows.
The second is similar, using \cite[(43)]{sarnak-tsimerman}.
\end{proof}

Since $\min(x,y)\ll x^a y^{1-a}$ for any $a\in [0,1]$, we have
\begin{equation}
	\mathcal K^m(A) \ll \min\left(1,\frac{x}{AT}\right) \left(\sqrt A + (dd')^{\frac 1{12}}(vw)^{\frac{\theta}{2}}(m+n)^{\frac 18} + (dd')^{\frac 3{16}}(vw)^{\frac18+\frac 34\theta}\right)(mnA)^\varepsilon,
\end{equation}
where we used $a=\frac 12$ in the second term and $a=\frac 34$ in the third term.
Summing over $A$ we conclude that
\begin{equation} \label{eq:K-m-est}
	\mathcal K^m \ll  \left(\sqrt{\frac xT} + (dd')^{\frac 1{12}}(vw)^{\frac{\theta}{2}}(m+n)^{\frac 18} + (dd')^{\frac 3{16}}(vw)^{\frac18+\frac 34\theta} \right)(mnx)^\varepsilon.
\end{equation}

We turn to the estimate of the integral
\begin{equation}
	\mathcal K^e := \int_\R \pfrac{d}{d'}^{ir} \frac {L(\frac 12-2ir,\chi_{d'})L(\frac 12+2ir,\chi_{d})\fS_{d'}(v,2ir)\fS_{d}(w,2ir)}{|\zeta(1+4ir)|^2\cosh\pi r |\Gamma(\frac{k+1}{2}+ir)|^2}  \hat\varphi(r) \, dr.
\end{equation}
By symmetry it suffices to estimate the integrals $\mathcal K^e_0 = \int_0^1$ and $\mathcal K^e_1 = \int_1^\infty$.
Estimating the divisor sums trivially we find that
\begin{equation}
	|\fS_d(w,s)| \leq \sigma_0(w)^2.
\end{equation}
For $|r|\leq 1$ we have $|\zeta(1+4ir)|^2\gg r^{-2}$ and $\cosh\pi r |\Gamma(\frac{k+1}{2}+ir)|^2 \gg 1$, so by Lemma~\ref{lem:phi-hat-est} we have the estimate
\begin{equation} \label{eq:Ke0-est}
	\mathcal K^e_0 \ll (vw)^\varepsilon \int_0^1 \left|L(\tfrac 12-2ir,\chi_{d'}) L(\tfrac 12+2ir, \chi_d)\right| \, dr.
\end{equation}
Since $\cosh\pi r |\Gamma(\frac{k+1}{2}+ir)|^2 \sim \pi r^k$ for large $r$ and since $|\zeta(1+4ir)|^{-1}\ll r^{\varepsilon}$ for all $r$ we have by Lemma~\ref{lem:phi-hat-est} that
\begin{equation}
	\mathcal K^e_1 \ll (vw)^\varepsilon \int_1^\infty \big|L(\tfrac 12-2ir,\chi_{d'})L(\tfrac 12+2ir,\chi_d)\big| \mfrac{dr}{r^{3/2-\varepsilon}}.
\end{equation}
We multiply each Dirichlet $L$-function by $r^{-3/8}$ and the last factor by $r^{3/4}$, then apply H\"older's inequality in the case $\frac 16+\frac 16+\frac 23=1$.
We obtain
\begin{equation}\label{eq:Ke1-holder}
	\mathcal K^e_1 \ll (vw)^\varepsilon \left(\int_1^\infty |L(\tfrac 12+ir,\chi_{d'})|^6 \mfrac{dr}{r^{9/4}}\right)^{\frac 16} \left(\int_1^\infty |L(\tfrac 12+ir,\chi_d)|^6 \mfrac{dr}{r^{9/4}}\right)^{\frac 16}  \left(\int_1^\infty \mfrac{dr}{r^{9/8-\varepsilon}}\right)^{\frac 23}.
\end{equation}
Young \cite{young-subconvexity} proved that
\begin{equation}
	\int_{T}^{T+1} |L(\tfrac 12+ir,\chi_d)|^6 \, dr \ll (|d|(1+T))^{1+\varepsilon},
\end{equation}
from which it follows that $K_0^e \ll (vw)^\varepsilon |dd'|^{\frac 16+\varepsilon}$ and
\begin{equation}
	\int_1^\infty |L(\tfrac 12+ir,\chi_d)|^6 \mfrac{dr}{r^{9/4}} \leq \sum_{T=1}^\infty \mfrac{1}{T^{9/4}} \int_T^{T+1} |L(\tfrac 12+ir,\chi_d)|^6 \, dr \ll |d|^{1+\varepsilon}.
\end{equation}
This, together with \eqref{eq:Ke1-holder} proves that
\begin{equation} \label{eq:K-e-est}
	\mathcal K^e \ll (vw)^\varepsilon |dd'|^{\frac 16+\varepsilon}.
\end{equation}

Putting \eqref{eq:K-h-est}, \eqref{eq:K-m-est}, and \eqref{eq:K-e-est} together, we find that
\begin{multline}
	\sum_{4\mid c>0} \frac{S_k^+(m,n,c)}{c} \varphi\pfrac{4\pi\sqrt{mn}}{c} 
	\\* \ll 
	\left(\sqrt{\mfrac xT} + \mfrac{vw|dd'|^{\frac 23}}{x} + (dd')^{\frac 1{12}}(vw)^{\frac{\theta}{2}}(m+n)^{\frac 18} + (dd')^{\frac 3{16}}(vw)^{\frac18+\frac 34\theta} \right)(mnx)^\varepsilon.
\end{multline}
To unsmooth the sum of Kloosterman sums, we argue as in \cite{sarnak-tsimerman,aa-kloosterman} to obtain
\begin{equation}
	\sum_{4\mid c>0} \frac{S^+_k(m,n,c)}{c} \varphi\pfrac{4\pi\sqrt{mn}}{c} - \sum_{x\leq c< 2x} \frac{S^+_k(m,n,c)}{c} \ll \frac{T\log x}{\sqrt x} (mn)^\varepsilon.
\end{equation}
Choosing $T=x^{\frac 23}$ and using that $m+n\leq mn$ we obtain
\begin{equation} \label{eq:kloo-dyadic-est}
	\sum_{x\leq c< 2x} \frac{S^+_k(m,n,c)}{c} \ll \left(x^{\frac 16} +  \mfrac{vw|dd'|^{\frac 23}}{x} + (dd')^{\frac 5{24}}(vw)^{\frac 14+\frac\theta2} \right)(mnx)^\varepsilon.
\end{equation}
To prove \eqref{eq:main-kloo} we sum the inital segment $c\leq (dd')^a(vw)^b$ and apply the Weil bound \eqref{eq:weil-bound}, then sum the dyadic pieces for $c\geq (dd')^a(vw)^b$ using \eqref{eq:kloo-dyadic-est}.
To balance the resulting terms we take $a=\frac 49$ and $b=\frac 23$, which gives the bound
\begin{equation}
	\sum_{c\leq x} \frac{S^+_k(m,n,c)}{c} \ll \left(x^{\frac 16} + (dd')^{\frac 29}(vw)^{\frac 13}\right)(mnx)^\varepsilon.
\end{equation}
This completes the proof. \qed

\appendix

\section{Young's theorem for even discriminants}

\label{sec:appendix}

Let $D$ be a fundamental discriminant.
Then $|D|=q$ or $4q$, where $q$ is squarefree (but not necessarily odd).
For a positive even integer $k$, 
let $\mathcal B_k(q)$ denote the set of weight $k$ holomorphic Hecke newforms of level dividing $q$.
Our goal in this appendix is to prove the following generalization of Young's hybrid subconvexity result \cite{young-subconvexity}.

\begin{theorem}
Notation as above, we have
\begin{equation}
	\sum_{f\in B_k(q)} L(\tfrac 12, f\times \chi_D)^3 \ll (k|D|)^{1+\ep}.
\end{equation}
\end{theorem}

A corresponding generalization also holds for Maass cusp forms and Eisenstein series; for simplicity we only deal with the holomorphic case here.

For ease of comparison with \cite{young-subconvexity}, we have adopted the notation of that paper for this section only.
We will indicate the changes that need to be made and refer the reader to \cite{conrey-iwaniec} and \cite{young-subconvexity} for the remaining details.
Starting in Section~4 of \cite{young-subconvexity}, our goal is to show that
\begin{equation} \label{eq:smoothed-main-sum}
	\sum_{k\equiv a(4)} w\pfrac{k-1-2T}{\Delta} \sum_{f\in B_k(q)} \omega_f^* L(\tfrac 12, f\times \chi_D)^3 \ll \Delta (T|D|)^{1+\ep},
\end{equation}
where $w$ is a smooth nonnegative function with support in $[\frac 12,3]$ which equals $1$ on the interval $[1,2]$, and $a$ is determined by $i^k = \chi_D(-1)$.
Here $\omega_f^*$ is a Petersson weight satisfying $\omega_f^* \gg (kq)^{-\ep}$.
Applying the approximate functional equation and the Petersson formula as in Section~5 of \cite{young-subconvexity}, we find that it suffices to show the following.
\begin{proposition}\label{prop:young}
For $i=1,2,3$, 
let $w_i$ be a smooth weight function supported on $x\asymp N_i$, with $1\ll N_i\ll(qT)^{1+\ep}$ and with $w_i^{(k)}\ll N_i^{-k}$.
Then
\begin{multline}
	\sum_{n_1,n_2,n_3} w_1(n_1)w_2(n_2)w_3(n_3) \chi_D(n_1n_2n_3) \sum_{c\equiv 0(q)} \frac{S(n_1n_2,n_3;c)}{c} B\pfrac{4\pi\sqrt{n_1n_2n_3}}{c}  \\
	\ll (N_1N_2N_3)^{1/2} \Delta T (qT)^{\ep},
\end{multline}
where $S(m,n;c)$ is the ordinary Kloosterman sum,
\begin{equation}
	B(x) = B^{\text{holo}}(x) = \sum_{k\equiv a(4)} (k-1) w\pfrac{k-1-2T}{\Delta} J_{k-1}(x)
\end{equation}
and $J_{k-1}(x)$ is the $J$-Bessel function.
\end{proposition}

With $w_1$, $w_2$, and $w_3$ as in Proposition~\ref{prop:young}, let
\begin{equation}
	S(N_1,N_2,N_3;C;B) = \sum_{\substack{c\asymp C \\ c\equiv 0(q)}} \mathcal S(N_1,N_2,N_3;c),
\end{equation}
where
\begin{equation}
	\mathcal S(N_1,N_2,N_3;c) = \sum_{n_1,n_2,n_3} \chi_D(n_1n_2n_3) S(n_1n_2,n_3;c) w_1(n_1)w_2(n_2)w_3(n_3) B\pfrac{4\pi\sqrt{n_1n_2n_3}}{c}.
\end{equation}
We now follow Section~8 of \cite{young-subconvexity}, where the main difference is that we must keep track of the dependence on $\operatorname{lcm}(c,|D|)$, which we write as $cs$, with $s\in \{1,2,4,8\}$.
Applying Poisson summation modulo $c$ to the sum over the lattice $\Z^3$ we find that
\begin{equation}
	\mathcal S(N_1,N_2,N_3;c) = \sum_{m_1,m_2,m_3} G(m_1,m_2,m_3;c) K(m_1,m_2,m_3;c),
\end{equation}
where 
\begin{equation}
	G(m_1,m_2,m_3;c) = \frac 1{(cs)^3}\sum_{a_1,a_2,a_3\bmod c} \chi_D(a_1a_2a_3) S(a_1a_2,a_3;c) e\pfrac{a_1m_1+a_2m_2+a_3m_3}{cs}
\end{equation}
and
\begin{equation}
	K(m_1,m_2,m_3;c) = \int_{\R^3} w_1(t_1)w_2(t_2)w_3(t_3) B\pfrac{4\pi \sqrt{t_1t_2t_3}}{c} e\pfrac{-m_1t_1-m_2t_2-m_3t_3}{cs} dt_1 dt_2 dt_3.
\end{equation}
The analysis of the analytic piece $K(m_1,m_2,m_3;c)$ is almost exactly the same as in \cite[Section~8]{young-subconvexity}; simply replace $t_i$ by $t_i/s^{2/3}$ and apply Lemma~8.1.
The only difference is that the phase $e\pfrac{-m_1m_2m_3}{c}$ in (8.4) is replaced by
\begin{equation} \label{eq:phase}
	e\pfrac{-m_1m_2m_3}{s^3c}.
\end{equation}

For the remainder of this section we will focus on the arithmetic piece $G(m_1,m_2,m_3;c)$.
We begin by fixing notation.
Let $D=tq'$, where $t$ and $q'$ are fundamental discriminants with $t\mid 2^\infty$ and $q'$ odd, so that $\chi_D = \chi_t \chi_{q'}$.
With $q\mid c$ and $cs=\operatorname{lcm}(c,D)$ as before, we have $s=t/(c,t)$.
Finally, write $c=c_oc_e$, with $c_e\mid 2^\infty$ and $c_o$ odd.
Then $cs$ factors as $cs=c_0\cdot sc_e$ into odd and even parts.
From the twisted multiplicativity of the Kloosterman sums, a straightforward computation gives the factorization
\begin{equation} \label{eq:G-fac}
	G(m_1,m_2,m_3;c) = G(m_1,m_2,\bar c_o m_3;c_e) G(m_1,m_2,\bar c_e \bar s^3 m_3;c_o),
\end{equation}
where we choose the inverse $\bar c_o$ such that
\begin{equation}
	c_o\bar c_o\equiv 1\pmod{s^3 c_e}.
\end{equation}

The second term on the right-hand side of \eqref{eq:G-fac} was evaluated in Lemma~10.2 of \cite{conrey-iwaniec}, which we record here in the following lemma (see also (9.2) of \cite{young-subconvexity}).
Note that Young's definition of $G(m_1,m_2,m_3;c)$, which we are using here, is slightly different from that of Conrey-Iwaniec.
Let  $R_k(m) = S(0,m;k)$
denote the Ramanujan sum and let
\begin{equation}
	H(w;q) = \sum_{u,v(q)} \chi_q(uv(u+1)(v+1)) e\pfrac{(uv-1)w}{q}.
\end{equation}

\begin{lemma}\label{lem:ci-odd}
Let $c_o=qr$ with $c_o$ odd and $q$ squarefree. Suppose $m_1,m_2,m_3$ are integers with
\begin{equation} \label{eq:cong}
	(m_3,r)=1 \quad \text{ and } \quad (m_1m_2,q,r)=1.
\end{equation}
Then we have
\begin{equation}
	e\pfrac{-m_1m_2m_3}{c_o} G(m_1,m_2,m_3;c_o) =  \frac{\chi_{k\ell}(-1)h}{rq^2\varphi(k)} R_k(m_1) R_k(m_2) R_k(m_3) H(\overline{rhk}m_1m_2m_3;\ell),
\end{equation}
where $h=(r,q)$, $k=(m_1m_2m_3,q)$, and $\ell=q/hk$. 
If the coprimality conditions above are not satisfied, then $G(m_1,m_2,m_3;c_o)$ vanishes.
\end{lemma}

Petrow and Young \cite[Lemma~9.4]{petrow-young} evaluated $G(m_1,m_2,m_3;c_e)$ when $c_e$ is a power of $2$.

\begin{lemma}\label{lem:py-2}
Suppose that $c_e\mid 2^\infty$ and factor $m_i$ into even and odd parts as $m_i=m_i^em_i^o$.
Then
\begin{equation}
	e\pfrac{-m_1m_2m_3}{s^3c_e} G(m_1,m_2,m_3;c_e) = \frac{s^3c_e^2}{t} \sum_{\Delta\mid 64} \frac 1{\varphi(\Delta)} \sum_{\chi\bmod \Delta} \mathfrak g_\chi \chi(m_1^om_2^om_3^o),
\end{equation}
where $\mathfrak g_\chi$ depends on $m_1^e,m_2^e,m_3^e,t,c_e,\chi$ and is bounded by an absolute constant.
\end{lemma}

Note that the phase terms in Lemmas~\ref{lem:ci-odd} and \ref{lem:py-2} combine to give
\begin{equation}
	e\pfrac{m_1m_2m_3}{s^3c},
\end{equation}
which exactly matches the phase term \eqref{eq:phase} coming from $K(m_1,m_2,m_3;c)$.

The last result we require is the following analogue of Lemma~9.3 of \cite{young-subconvexity}.
The remainder of the proof of \ref{prop:young} follows the proof of Proposition~7.3 of \cite{young-subconvexity}.

\begin{lemma}
Let $c=q'r$ with $q'$ odd and squarefree.
Let $\alpha_{m_1}$, $\beta_{m_2}$, and $\gamma_{m_3}$ be sequences of complex numbers satisfying $\alpha_{m_1} = \alpha_{m_1^e}\alpha_{m_1^o}$, $\beta_{m_2} = \beta_{m_2^e}\beta_{m_2^o}$, $\gamma_{m_3} = \gamma_{m_3^e}\gamma_{m_3^o}$, and $|\alpha_{m_1^e}|=|\beta_{m_2^e}|=|\gamma_{m_3^e}|=1$, and let $\delta_r$ be an arbitrary sequence of complex numbers.
Then for $U\geq 1$ we have
\begin{multline} \label{eq:Lem93bound}
	\int_{|u|\leq U} \left| \sum_{\substack{m_1,m_2,m_3 \\ m_i \asymp M_i}} \sum_{r\asymp R} \alpha_{m_1} \beta_{m_2} \gamma_{m_3} \delta_r G(m_1,m_2,m_3;c) e\pfrac{-m_1m_2m_3}{s^3c} \left( \frac{m_1 m_2 m_3}{c} \right)^{iu}  \right| \, du \\
	\ll \frac{q^{\frac 12+\ep}}{Rq^2} \left(qU + M_1M_2\right)^{\frac 12} \left(qU + M_3R\right)^{\frac 12} \left( \sum_{d,m_1,m_2,m_3,r} d^{1+\ep} |\alpha_{m_1}\beta_{m_2}\gamma_{dm_3}\delta_{dr}|^2 \right)^{\frac 12}.
\end{multline}
\end{lemma}

Remark: as in \cite[Lemma~9.3]{young-subconvexity}, when $\gamma_{m_3},\delta_r\ll 1$ the sum over $d$ does not change the bound which arises from $d=1$.

\begin{proof}
Using Lemmas~\ref{lem:ci-odd} and \ref{lem:py-2}, the left-hand side of \eqref{eq:Lem93bound} is
\begin{multline}
	\ll \sum_{hk\ell=q'} \frac{h}{Rq^2\varphi(k)} \sum_{\Delta\mid 64} \frac{1}{\varphi(\Delta)} \int_{|u|\leq U} \Big|	\sideset{}{^*}\sum_{m_1,m_2,m_3} \sideset{}{^*}\sum_{r\asymp R} \alpha_{m_1} \beta_{m_2} \gamma_{m_3} \delta_r \mathfrak g_\chi \chi(m_1^o m_2^o m_3^o) \\
	\times R_k(m_1) R_k(m_2) R_k(m_3) H(\overline{rhk}m_1m_2m_3;\ell) \pfrac{m_1^om_2^om_3^o}{q'}^{iu} \Big|\,du,
\end{multline}
where the star indicates that the sum is restricted by the coprimality conditions \eqref{eq:cong}.
Using that $R_k(m_i) = R_k(m_i^e)R_k(m_i^o)$ and $|R_k(m)|\leq (k,m)$ we bound the above by
\begin{multline}
	\ll \sum_{hk\ell=q'} \frac{h}{Rq^2\varphi(k)} \sum_{\Delta\mid 64} \frac{1}{\varphi(\Delta)} \sum_{\substack{j_1,j_2,j_3 \\ j_i\ll \log_2(M_i)}} \int_{|u|\leq U} \Big| \sideset{}{^*}\sum_{\substack{m_1^o,m_2^o,m_3^o \\ m_i^o \asymp M_i/2^{j_i}, m_i^e=2^{j_i}}} \sideset{}{^*}\sum_{r\asymp R} \alpha_{m_1^o} \beta_{m_2^o} \gamma_{m_3^o} \delta_r \chi(m_1^o m_2^o m_3^o) \\
	\times R_k(m_1^o) R_k(m_2^o) R_k(m_3^o) H(\overline{rhk}bm_1^om_2^om_3^o;\ell) \pfrac{m_1^om_2^om_3^o}{q'}^{iu} \Big|\,du,
\end{multline}
where $b=m_1^em_2^em_3^e$.
Now following the proof of Lemma~9.3 of \cite{young-subconvexity} almost exactly, we obtain the desired bound.
\end{proof}

\bibliographystyle{plain}
\bibliography{dist-geom-inv-bib}

\newcommand{\noop}[1]{}
\begin{thebibliography}{10}

\bibitem{aa-kloosterman}
Scott Ahlgren and Nickolas Andersen.
\newblock Kloosterman sums and {M}aass cusp forms of half integral weight for
  the modular group.
\newblock {\em Int. Math. Res. Not. IMRN}, (2):492--570, 2018.

\bibitem{baruch-mao}
Ehud~Moshe Baruch and Zhengyu Mao.
\newblock A generalized {K}ohnen-{Z}agier formula for {M}aass forms.
\newblock {\em J. Lond. Math. Soc. (2)}, 82(1):1--16, 2010.

\bibitem{biro}
A.~Bir{\'o}.
\newblock Cycle integrals of {M}aass forms of weight 0 and {F}ourier
  coefficients of {M}aass forms of weight {$1/2$}.
\newblock {\em Acta Arith.}, 94(2):103--152, 2000.

\bibitem{bjo}
Jan~Hendrik Bruinier, Paul Jenkins, and Ken Ono.
\newblock Hilbert class polynomials and traces of singular moduli.
\newblock {\em Math. Ann.}, 334(2):373--393, 2006.

\bibitem{conrey-iwaniec}
J.~B. Conrey and H.~Iwaniec.
\newblock The cubic moment of central values of automorphic {$L$}-functions.
\newblock {\em Ann. of Math. (2)}, 151(3):1175--1216, 2000.

\bibitem{DV}
Henri Darmon and Jan Vonk.
\newblock Singular moduli for real quadratic fields: a rigid analytic approach.
\newblock {\em preprint}.

\bibitem{deshouillers-iwaniec}
J.-M. Deshouillers and H.~Iwaniec.
\newblock Kloosterman sums and {F}ourier coefficients of cusp forms.
\newblock {\em Invent. Math.}, 70(2):219--288, 1982/83.

\bibitem{nist}
{NIST Digital Library of Mathematical Functions}.
\newblock http://dlmf.nist.gov/, Release 1.0.10 of 2015-08-07.
\newblock Online companion to \cite{nist-book}.

\bibitem{duke-half-integral}
W.~Duke.
\newblock Hyperbolic distribution problems and half-integral weight {M}aass
  forms.
\newblock {\em Invent. Math.}, 92(1):73--90, 1988.

\bibitem{duke-24}
W.~Duke.
\newblock Modular functions and the uniform distribution of {CM} points.
\newblock {\em Math. Ann.}, 334(2):241--252, 2006.

\bibitem{duke-friedlander-iwaniec}
W.~Duke, J.~B. Friedlander, and H.~Iwaniec.
\newblock The subconvexity problem for {A}rtin {$L$}-functions.
\newblock {\em Invent. Math.}, 149(3):489--577, 2002.

\bibitem{dfi-weyl}
W.~Duke, J.~B. Friedlander, and H.~Iwaniec.
\newblock Weyl sums for quadratic roots.
\newblock {\em Int. Math. Res. Not. IMRN}, (11):2493--2549, 2012.

\bibitem{DIT-cycle}
W.~Duke, {\"O}.~Imamo{\=g}lu, and {\'A}.~T{\'o}th.
\newblock Cycle integrals of the {$j$}-function and mock modular forms.
\newblock {\em Ann. of Math. (2)}, 173(2):947--981, 2011.

\bibitem{DIT-geom}
W.~Duke, {\"O}.~Imamo{\=g}lu, and {\'A}.~T{\'o}th.
\newblock Geometric invariants for real quadratic fields.
\newblock {\em Ann. of Math. (2)}, 184(3):949--990, 2016.

\bibitem{dit-kronecker}
W.~Duke, \"{O}. Imamo\={g}lu, and \'{A}. T\'{o}th.
\newblock Kronecker's first limit formula, revisited.
\newblock {\em Res. Math. Sci.}, 5(2):Paper No. 20, 21, 2018.

\bibitem{dunn}
Alexander Dunn.
\newblock Uniform bounds for sums of {K}loosterman sums of half integral
  weight.
\newblock {\em Res. Number Theory}, 4(4):Art. 45, 21, 2018.

\bibitem{folsom-masri}
Amanda Folsom and Riad Masri.
\newblock Equidistribution of {H}eegner points and the partition function.
\newblock {\em Math. Ann.}, 348(2):289--317, 2010.

\bibitem{gkz}
B.~Gross, W.~Kohnen, and D.~Zagier.
\newblock Heegner points and derivatives of {$L$}-series. {II}.
\newblock {\em Math. Ann.}, 278(1-4):497--562, 1987.

\bibitem{hejhal-stf2}
Dennis~A. Hejhal.
\newblock {\em The {S}elberg trace formula for {${\rm PSL}(2,\,{\mathbb R})$}.
  {V}ol. 2}.
\newblock Lecture Notes in Mathematics. Springer-Verlag, Berlin, 1983.

\bibitem{hoffstein-lockhart}
Jeffrey Hoffstein and Paul Lockhart.
\newblock Coefficients of {M}aass forms and the {S}iegel zero.
\newblock {\em Ann. of Math. (2)}, 140(1):161--181, 1994.
\newblock With an appendix by D. Goldfeld, Hoffstein and D. Lieman.

\bibitem{ibukiyama-saito}
Tomoyoshi Ibukiyama and Hiroshi Saito.
\newblock On zeta functions associated to symmetric matrices, {II}:
  {F}unctional equations and special values.
\newblock {\em Nagoya Math. J.}, 208:265--316, 2012.

\bibitem{kalia-kumar}
Vaibhav Kalia and Balesh Kumar.
\newblock Regularized inner products and modular invariants for real quadratic
  fields.
\newblock {\em Preprint}, 2024.

\bibitem{katok-sarnak}
Svetlana Katok and Peter Sarnak.
\newblock Heegner points, cycles and {M}aass forms.
\newblock {\em Israel J. Math.}, 84(1-2):193--227, 1993.

\bibitem{kim-sarnak}
Henry~H. Kim.
\newblock Functoriality for the exterior square of {${\rm GL}_4$} and the
  symmetric fourth of {${\rm GL}_2$}.
\newblock {\em J. Amer. Math. Soc.}, 16(1):139--183, 2003.
\newblock With appendix 1 by D. Ramakrishnan and appendix 2 by Kim and P.
  Sarnak.

\bibitem{kohnen-zagier}
W.~Kohnen and D.~Zagier.
\newblock Values of {$L$}-series of modular forms at the center of the critical
  strip.
\newblock {\em Invent. Math.}, 64(2):175--198, 1981.

\bibitem{kohnen-1}
Winfried Kohnen.
\newblock Modular forms of half-integral weight on {$\Gamma _{0}(4)$}.
\newblock {\em Math. Ann.}, 248(3):249--266, 1980.

\bibitem{kohnen-newforms}
Winfried Kohnen.
\newblock Newforms of half-integral weight.
\newblock {\em J. Reine Angew. Math.}, 333:32--72, 1982.

\bibitem{kuznetsov}
N.~V. Kuznetsov.
\newblock The {P}etersson conjecture for cusp forms of weight zero and the
  {L}innik conjecture. {S}ums of {K}loosterman sums.
\newblock {\em Mat. Sb. (N.S.)}, 111(153):334--383, 1980.

\bibitem{lehmer-remainders}
D.~H. Lehmer.
\newblock On the remainders and convergence of the series for the partition
  function.
\newblock {\em Trans. Amer. Math. Soc.}, 46:362--373, 1939.

\bibitem{maass1}
Hans Maass.
\newblock \"{U}ber eine neue {A}rt von nichtanalytischen automorphen
  {F}unktionen und die {B}estimmung {D}irichletscher {R}eihen durch
  {F}unktionalgleichungen.
\newblock {\em Math. Ann.}, 121:141--183, 1949.

\bibitem{maass2}
Hans Maass.
\newblock Die {D}ifferentialgleichungen in der {T}heorie der elliptischen
  {M}odulfunktionen.
\newblock {\em Math. Ann.}, 125:235--263, 1952.

\bibitem{masri-cycle}
Riad Masri.
\newblock The asymptotic distribution of traces of cycle integrals of the
  {$j$}-function.
\newblock {\em Duke Math. J.}, 161(10):1971--2000, 2012.

\bibitem{motohashi-fourth}
Y.~Motohashi.
\newblock A functional equation for the spectral fourth moment of modular
  {H}ecke {$L$}-functions.
\newblock In {\em Proceedings of the {S}ession in {A}nalytic {N}umber {T}heory
  and {D}iophantine {E}quations}, volume 360 of {\em Bonner Math. Schriften},
  page~19. Univ. Bonn, Bonn, 2003.

\bibitem{nist-book}
F.~W.~J. Olver, D.~W. Lozier, R.~F. Boisvert, and C.~W. Clark, editors.
\newblock {\em {NIST Handbook of Mathematical Functions}}.
\newblock Cambridge University Press, New York, NY, 2010.
\newblock Print companion to \cite{nist}.

\bibitem{petrow-young}
Ian Petrow and Matthew~P. Young.
\newblock A generalized cubic moment and the {P}etersson formula for newforms.
\newblock {\em Mathematische Annalen}, 373(1-2):287--353, 2019.

\bibitem{proskurin-new}
N.~V. Proskurin.
\newblock On general {K}loosterman sums.
\newblock {\em Zap. Nauchn. Sem. S.-Peterburg. Otdel. Mat. Inst. Steklov.
  (POMI)}, 302(19):107--134, 2003.

\bibitem{roelcke2}
Walter Roelcke.
\newblock Das {E}igenwertproblem der automorphen {F}ormen in der hyperbolischen
  {E}bene, {II}.
\newblock {\em Math. Ann.}, 168:261--324, 1966.

\bibitem{sarnak-additive}
Peter Sarnak.
\newblock Additive number theory and {M}aass forms.
\newblock In {\em Number theory ({N}ew {Y}ork, 1982)}, volume 1052 of {\em
  Lecture Notes in Math.}, pages 286--309. Springer, Berlin, 1984.

\bibitem{sarnak-tsimerman}
Peter Sarnak and Jacob Tsimerman.
\newblock On {L}innik and {S}elberg's conjecture about sums of {K}loosterman
  sums.
\newblock In {\em Algebra, arithmetic, and geometry: in honor of {Y}u. {I}.
  {M}anin. {V}ol. {II}}, volume 270 of {\em Progr. Math.}, pages 619--635.
  Birkh{\"a}user Boston, Inc., Boston, MA, 2009.

\bibitem{selberg-harmonic}
A.~Selberg.
\newblock Harmonic analysis and discontinuous groups in weakly symmetric
  {R}iemannian spaces with applications to {D}irichlet series.
\newblock {\em J. Indian Math. Soc. (N.S.)}, 20:47--87, 1956.

\bibitem{selberg-estimation}
Atle Selberg.
\newblock On the estimation of {F}ourier coefficients of modular forms.
\newblock In {\em Proc. {S}ympos. {P}ure {M}ath., {V}ol. {VIII}}, pages 1--15.
  Amer. Math. Soc., Providence, R.I., 1965.

\bibitem{shimura}
Goro Shimura.
\newblock On modular forms of half integral weight.
\newblock {\em Ann. of Math. (2)}, 97:440--481, 1973.

\bibitem{sturm}
Jacob Sturm.
\newblock Special values of zeta functions, and {E}isenstein series of half
  integral weight.
\newblock {\em Amer. J. Math.}, 102(2):219--240, 1980.

\bibitem{titchmarsh}
E.~C. Titchmarsh.
\newblock {\em The {T}heory of the {R}iemann {Z}eta-{F}unction}.
\newblock Oxford, at the Clarendon Press, 1951.

\bibitem{young-subconvexity}
Matthew~P. Young.
\newblock Weyl-type hybrid subconvexity bounds for twisted {$L$}-functions and
  {H}eegner points on shrinking sets.
\newblock {\em J. Eur. Math. Soc. (JEMS)}, 19(5):1545--1576, 2017.

\bibitem{Za}
Don Zagier.
\newblock A {K}ronecker limit formula for real quadratic fields.
\newblock {\em Math. Ann.}, 213:153--184, 1975.

\end{thebibliography}

\end{document}